\theoremstyle{plain} %default
\newtheorem{thm}{Theorem}[section]
\newtheorem{lem}[thm]{Lemma}
\newtheorem{prop}[thm]{Proposition}
\theoremstyle{definition}
\newtheorem{defn}{Definition}[section]
\newtheorem{conj}{Conjecture}[section]
\theoremstyle{remark}
\def\Per{{\rm Per}}
\def\Vol{ {\rm Vol} }
\def\RR{\mathbb{R}}
\DeclareMathOperator{\arccot}{arccot}
\begin{document}

\title{Proof of the Log-Convex Density Conjecture}

\author{Gregory R. Chambers}
\begin{abstract}
	We completely characterize isoperimetric regions in
	$\RR^n$ with density $e^h$, where $h$ is convex, smooth, and radially
	symmetric. 
	In particular, balls around the origin constitute isoperimetric regions of any given volume,
	proving the Log-Convex Density Conjecture due to Kenneth Brakke.
\end{abstract}
\maketitle

\tableofcontents

%%%%%%%%%%%%%%%%%%%%%%%%%%%%%%%%%%%%%%%%%%%%%%%%%%%%%%%%
\section{Introduction}
%%%%%%%%%%%%%%%%%%%%%%%%%%%%%%%%%%%%%%%%%%%%%%%%%%%%%%%%

Given a positive function $f$ on $\RR^n$, we define the weighted
perimeter and volume of a set $A \subset \RR^n$ of locally finite perimeter as
	$$\Per(A) = \int_{\partial A} f d \mathcal{H}^{n-1} \qquad {\rm{and}} \qquad \Vol(A) = \int_{A} f d \mathcal{H}^n.$$
Such a positive function $f$ is called a \emph{density} on $\RR^n$.
Here, $\mathcal{H}^m$ indicates the $m$-dimensional Hausdorff measure, and $\partial A$ refers to the essential boundary of $A$.  A good reference for sets of locally finite perimeter and their properties
is the book \cite{maggi_finite} by F. Maggi.  We will use these conventions for the rest of this article, and we also note that all subsets of $\mathbb{R}^n$ involved in this article are of locally finite perimeter.

One natural question immediately arises: for which volumes do isoperimetric regions exists, and can we describe the ones
that do?
Specifically, if we fix a positive weighted volume $M > 0$,
does there exist a set $A \subset \RR^n$ such that
$\Vol(A) = M$ and 
$$\Per(A) = \inf_{\substack{ Q \subset \RR^n \\ \Vol(Q) = M }} \Per(Q).$$

In \cite{morg_iso_1}, Rosales, Ca{\~{n}}ete, Bayle and Morgan consider this problem.  One family of densities that
they examine is radial log-convex densities, that is, densities of the form
	$$ f(x) = e^{g(|x|)} $$
where $g$ is a smooth, convex, and even function on $\RR$.
In particular, for such densities, they show that balls around
the origin are stationary and stable (Corollary 3.11).
By \emph{stable}, we mean that $\Per''(0) \geq 0$
under smooth, volume-conserving variations.  In fact, they show that for any radial, smooth
density $f = e^g$, balls around the origin are stable if and only if $g$ is convex (Theorem 3.10).
This motivates Conjecture \ref*{conj:main} ($3.12$ in their article), first stated by Kenneth Brakke:
\begin{conj}[Log-Convex Density Conjecture]
	\label{conj:main}
	In $\RR^n$ with a smooth, radial, log-convex density, balls around the
	origin provide isoperimetric regions of any given volume.
\end{conj}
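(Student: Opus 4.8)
\medskip
\noindent\textbf{Outline of the intended proof.}
The plan is to show that in $\RR^n$ with density $f = e^{g(|x|)}$, where $g$ is smooth, convex, and even, some isoperimetric region of a given volume is a ball centered at the origin. Since round balls about the origin realize every positive weighted volume and the weighted volume of such a ball depends monotonically on its radius, this suffices to prove Conjecture \ref{conj:main}; in fact the argument will show that, up to the choice of center after the region is seen to be round, every isoperimetric region is such a ball.

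\emph{Step 1: existence, regularity, and reduction to a surface of revolution.} First I would establish that isoperimetric regions exist for every $M > 0$. Taking a minimizing sequence $A_m$ with $\Vol(A_m) = M$ and using that $g$, being convex and even, is nondecreasing on $[0,\infty)$ so that $f$ is radially nondecreasing, a concentration--compactness argument (mass cannot be pushed to infinity without paying ever more perimeter) produces a minimizer $A$. By the regularity theory for almost-minimizers of a smooth positive weighted perimeter, $\partial A$ is a smooth hypersurface of constant weighted generalized mean curvature away from a closed singular set of dimension at most $n-8$, and $A$ may be taken bounded. Because $f$ depends only on $|x|$, spherical symmetrization about a ray from the origin does not increase $\Per$ while preserving $\Vol$; applying it, I may assume that $A$ is a hypersurface of revolution about a line $\ell$ through the origin meeting each sphere $\partial B_r$ in a geodesic cap centered at $\ell \cap \partial B_r$.

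\emph{Step 2: reduction to a planar problem.} Intersecting with a half-plane $P$ bounded by $\ell$ converts the problem into a weighted isoperimetric problem in $P$: the generating curve $\gamma = P \cap \partial A$, together with a sub-segment of $\ell$, bounds a planar region, and $\Per(A)$ and $\Vol(A)$ become the weighted length and area in $P$ for the effective density obtained by multiplying $e^{g}$ by the rotational Jacobian. I would then parametrize $\gamma$ by arc length, record its unit tangent and the angle it makes with the position vector, and derive the first-order ODE that the constant-weighted-mean-curvature condition imposes. The circular arcs centered at the origin are precisely the solutions of this ODE that generate balls about the origin, so the task becomes: no other solution bounds an isoperimetric region.

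\emph{Step 3: rigidity of the generating curve, and conclusion.} This is the heart of the matter and the main obstacle. I would introduce an auxiliary function $\gamma$ along the generating curve — the angle of the inward normal relative to the radial direction, whose analysis naturally involves $\arccot$ — and prove a package of properties for it: monotonicity in the appropriate parameter, control of its sign, and the impossibility of a full turn. These would force the generating curve to be embedded, to meet $\ell$ orthogonally at exactly two points, to be a graph over the relevant portion of $\ell$, and to be star-shaped about the origin. Comparing $\gamma$ with the family of circular arcs centered at the origin, and using the convexity of $g$ to fix the sign of the relevant curvature comparison, I would conclude that $\gamma$ coincides with such an arc, so $A$ is a ball about the origin; since these exist for all volumes, the conjecture follows. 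The difficulty is exactly in excluding the ``unduloid-type'' oscillating solutions of the curvature ODE: the stability established in \cite{morg_iso_1} is too weak for this, so a genuinely new monotone quantity together with a careful ODE and barrier analysis exploiting log-convexity is what the proof must supply.
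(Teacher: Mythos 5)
Your Steps 1 and 2 coincide with the paper's reductions: existence and boundedness of minimizers (via the radial monotonicity of $f$ and the results of Morgan et al.), regularity up to a singular set of dimension at most $n-8$, spherical symmetrization, and the constant generalized mean curvature ODE for the generating curve of the resulting surface of revolution. These steps are citations plus standard arguments, and you have them right.

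The genuine gap is Step 3, which is where the entire proof lives, and your proposal does not supply it — you yourself acknowledge that ``a genuinely new monotone quantity together with a careful ODE and barrier analysis \ldots is what the proof must supply.'' Naming an auxiliary angle and asserting that it will turn out to be monotone is not an argument; the oscillating (unduloid-type) solutions of the ODE are exactly the objects one must rule out, and no soft appeal to convexity does this. Concretely, the paper's mechanism is: (i) spherical symmetry alone forces the \emph{Tangent Restriction} $\gamma'(x)\cdot N(x)\le 0$ on the upper half of the generating curve, so the tangent can never point radially outward; (ii) the \emph{First Tangent Lemma} shows that if $\gamma$ is not a centered circle, it must develop a point with $\gamma'=(0,-1)$ and $\kappa>0$ — this is proved by splitting $\gamma$ into an ``upper curve'' and a ``lower curve,'' introducing the canonical circle $C_x$ to rewrite $H_0=\kappa+(n-2)\kappa(C_x)$, and then using an admissibility notion plus an explicit comparison of $H_1=\partial g/\partial\nu$ at height-matched pairs of points (this is where log-convexity of $f$ and $g'\ge 0$, $g''\ge 0$ enter quantitatively) to show the lower curve bends strictly faster than the upper one; (iii) the \emph{Second Tangent Lemma} shows such a point forces a later point with $\gamma'=(0,1)$, which violates the Tangent Restriction. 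Without an analogue of the curvature-comparison and $H_1$-comparison computations (or some other concrete monotone quantity with a proof of monotonicity), your outline establishes only the framework that was already known before this paper, not the conjecture.
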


	This article seeks to completely solve Conjecture \ref*{conj:main}
by proving the following theorem.
\begin{thm}[Centred Balls are Isoperimetric]
	\label{thm:main}
	Given a density $f(x) = e^{g(|x|)}$ on $\RR^n$ with $g$ smooth, convex and even,
	balls around the origin are isoperimetric regions with respect to weighted perimeter and volume.
\end{thm}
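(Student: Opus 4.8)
The plan is to reduce Theorem~\ref{thm:main} to a one-dimensional variational problem by symmetrisation, and then to show that the centred ball is the only competitor left standing.

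\textbf{Reduction to a region of revolution.} If $g$ is constant the density is constant and the assertion is the classical Euclidean isoperimetric inequality (centred balls being among the minimisers), so assume $g$ non-constant; being even, convex and non-constant, $g$ then tends to $+\infty$ (at least linearly), so the density $e^{g(|x|)}$ blows up at infinity. This confinement, with $BV$-compactness and lower semicontinuity of $\Per$ (see \cite{maggi_finite, morg_iso_1}), produces an isoperimetric region $A$ of any prescribed weighted volume $M$. Now apply spherical symmetrisation about the $x_n$-axis: for each $r>0$ replace $A\cap\partial B_r$ by the geodesic cap of equal $\mathcal H^{n-1}$-measure centred at $r e_n$. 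Since the density is radial this preserves $\Vol$ and does not increase $\Per$, so the symmetrised set is still isoperimetric and we may assume $A$ has this ``nested cap'' form. Encode $A$ by a profile $\gamma\colon(0,\infty)\to[0,\pi]$, with $A\cap\partial B_r$ the cap of angular radius $\gamma(r)$; equivalently $A=\{x\neq0:\ \mathrm{colat}(x)<\gamma(|x|)\}$ up to a null set, where $\mathrm{colat}(x)$ denotes the angle between $x$ and $e_n$. The centred ball $B_R$ is exactly $\gamma=\pi\,\mathbf{1}_{(0,R)}$, and $\Per(B_R)=e^{g(R)}\,\mathcal H^{n-1}(\partial B_R)$ is strictly increasing in $R$; so it suffices to prove that $\gamma$ must have this form.

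\textbf{Regularity and properties of $\gamma$.} Writing $\Per$ and $\Vol$ as integrals in $\gamma$ and $\gamma'$ (a lateral ``frustum'' integral over $\{0<\gamma<\pi\}$ together with spherical pieces where $\gamma$ jumps), regularity theory for weighted isoperimetric boundaries shows that on $\{0<\gamma<\pi\}$ the generating curve of $\partial A$ is smooth with constant generalised mean curvature $H-g'(|x|)\langle x/|x|,\nu\rangle\equiv\lambda$, $\nu$ being the outer normal. I would then prove the needed ``Properties of $\gamma$''. First, a \emph{fill-the-hole} step: if $\partial B_r\subset A$ but $B_r\not\subset A$, then some region of revolution $U\subset B_r$ has $\partial U\subset\partial A$, and passing to $A\cup U$ strictly lowers $\Per$ while raising $\Vol$; this is impossible because a scaling argument (using that $g$ is non-decreasing on $[0,\infty)$) shows the isoperimetric profile is strictly increasing. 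Hence $\{\gamma=\pi\}$ is downward closed. Second, by a reflection / moving-sphere argument combined with the constant-generalised-mean-curvature equation and the convexity of $g$ (which fixes the sign of the relevant curvature and stops the generating curve from turning back), I would show $\gamma$ is non-increasing --- equivalently, $A$ is star-shaped about the origin --- and that $\{0<\gamma<\pi\}$ is a single interval $(R_1,R_2)$.

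\textbf{Killing the transition interval.} It remains to prove $R_1=R_2$, for then $\gamma=\pi\,\mathbf{1}_{(0,R_1)}$ and $A=B_{R_1}$. Using star-shapedness, parametrise $\partial A$ as a radial graph $\rho(\phi)$, $\phi\in[0,\pi]$; the problem becomes, up to a fixed dimensional constant, minimising $\int_0^\pi e^{g(\rho)}(\rho\sin\phi)^{n-2}\sqrt{\rho^2+\rho'^2}\,d\phi$ subject to $\int_0^\pi ( \int_0^{\rho(\phi)} e^{g(t)}t^{n-1}\,dt )\,\sin^{n-2}\phi\,d\phi$ being fixed, with $\rho\equiv\mathrm{const}$ the centred ball. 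I would attempt to show that any non-constant admissible $\rho$ has strictly larger weighted perimeter than the centred ball of equal weighted volume, by a rearrangement/comparison argument --- transporting the weighted volume of each sphere inward, invoking a convexity (Jensen-type) inequality, and combining it with $\sqrt{\rho^2+\rho'^2}\ge\rho$ while tracking the equality cases. The decisive use of the hypothesis $g''\ge0$ must enter precisely here: for constant density ($g''\equiv0$) non-centred balls are also isoperimetric, so no such strict inequality can hold in that case, and making the estimate strict is the delicate point. Once $A=B_R$, and since $A$ was an arbitrary isoperimetric region of volume $M$, centred balls are isoperimetric, which is Theorem~\ref{thm:main}.

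\textbf{Main obstacle.} Existence and symmetrisation are routine; the work is concentrated in the last two steps. A priori the symmetrised isoperimetric region need not be star-shaped about the origin, and its generating curve could be a complicated constant-generalised-mean-curvature arc --- a log-convex analogue of an unduloid or nodoid --- moving towards and then away from the origin. Proving that $\gamma$ is monotone, and then that the transition interval is empty, requires a careful interaction between that curvature equation and the log-convexity hypothesis, through maximum principles for the generating curve and the rearrangement/convexity estimate above; I expect this is where essentially all of the paper's effort lies.
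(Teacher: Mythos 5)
Your reduction steps (existence from the blow-up of the density, spherical symmetrization, reduction to a generating curve with constant generalized mean curvature $H_f=H_0+\partial g/\partial\nu\equiv\lambda$) coincide with the paper's Section 2 and are fine. The gap is in the two steps you label ``killing the transition interval'' and the monotonicity of $\gamma$, which is precisely where the theorem lives, and which you do not actually prove. First, the claim that the symmetrized minimizer is star-shaped about the origin (equivalently that the angular profile $\gamma(r)$ is non-increasing) is not established by your ``reflection / moving-sphere argument'' and is in fact \emph{false} in general: when $g$ is constant on a neighbourhood of the origin, small balls lying entirely in the flat core $B_{\mathcal{R}(f)}$ and not containing the origin are isoperimetric (this is the content of the paper's uniqueness theorem), and their angular profile vanishes, then grows, then vanishes again. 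So the radial-graph parametrization $\rho(\phi)$ on which your final estimate rests is not available without first separating out the flat-core case, and even outside that case star-shapedness is a nontrivial conclusion, not a lemma you can wave at.

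Second, and more seriously, the decisive inequality --- that any non-constant radial graph has strictly larger weighted perimeter than the centred ball of equal weighted volume, via ``transporting weighted volume inward, a Jensen-type inequality, and $\sqrt{\rho^2+\rho'^2}\ge\rho$'' --- is exactly the statement of the conjecture in disguise, and you offer no mechanism for it. Discarding $\rho'$ via $\sqrt{\rho^2+\rho'^2}\ge\rho$ removes the only term that penalizes non-constant competitors, and the convexity of $g$ cannot be fed into a one-line rearrangement here (as you yourself note, the estimate degenerates for $g''\equiv0$). The paper closes this gap by a genuinely different and much more delicate route: it follows the generating curve from its point of maximal magnitude, compares the ``upper'' and ``lower'' arcs of the constant-$H_f$ ODE via a curvature comparison theorem and an explicit comparison of the radial derivative term $\partial g/\partial\nu$ at points of equal height (the admissible-pair computation), and deduces that unless the curve is a centred circle it must develop a vertical tangent $(0,-1)$ and then another vertical tangent $(0,1)$, contradicting the monotonicity of $|\gamma|$ forced by spherical symmetry. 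None of that analysis, or any substitute for it, appears in your proposal; as written it is a correct setup followed by a statement of intent for the hard part.
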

We also prove an additional theorem which morally says that these are the only isoperimetric
regions.  If we have $f$ as in Theorem \ref*{thm:main}, then let
$$ \mathcal{R}(f) = \sup \{ |x| : f(x) = f(0) \}.$$
Since $f$ is convex and radially symmetric, $f = f(0)$ on all of $B_{\mathcal{R}(f)}$.  The purpose of defining
this centered ball is to handle the case of when $f$ is not strictly convex and is constant on some neighborhood
of $0$.
Given this notation, we have that the following uniqueness theorem is true.
\begin{thm}[Uniqueness of Isoperimetric Regions]
	\label{thm:uniqueness}
	Up to sets of measure $0$, the only isoperimetric regions
	are balls centered at the origin, and balls that lie entirely in 
	$$B_{\mathcal{R}(f)} = \{ x : |x| \leq \mathcal{R}(f) \}.$$
\end{thm}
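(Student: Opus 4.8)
The plan is to obtain Theorem~\ref{thm:uniqueness} as the rigidity statement accompanying Theorem~\ref{thm:main}. Fix $M>0$, let $A\subset\RR^n$ be an isoperimetric region with $\Vol(A)=M$, and let $B_\rho$ be the centred ball with $\Vol(B_\rho)=M$, so that $\Per(A)=\Per(B_\rho)$ by Theorem~\ref{thm:main}. By the regularity theory for isoperimetric boundaries in a smooth positive density, after discarding a null set we may assume that $\partial A$ is a smooth embedded hypersurface away from a closed singular set of Hausdorff dimension at most $n-8$, carrying constant generalized (density-weighted) mean curvature; this supplies the first- and second-variation information we shall use. Write $\mathcal R := \mathcal R(f)$: on $\overline{B_{\mathcal R}}$ one has $f\equiv f(0)$, whereas on $\{|x|>\mathcal R\}$ the density $f$ is radially strictly increasing, since $g$ is convex with $g(\mathcal R)=g(0)=\min g$. (If $g$ is constant then $\mathcal R=\infty$, $\overline{B_{\mathcal R}}=\RR^n$, and the theorem reduces to classical Euclidean uniqueness; so assume $\mathcal R<\infty$.) I split into the regime $M\le\Vol(B_{\mathcal R})$, in which $B_\rho\subseteq\overline{B_{\mathcal R}}$, and the regime $M>\Vol(B_{\mathcal R})$, in which $\partial B_\rho$ genuinely meets the region where $g$ grows.

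In the regime $M\le\Vol(B_{\mathcal R})$ the first task is to show that, up to a null set, $A\subseteq\overline{B_{\mathcal R}}$ --- this is where $\mathcal R$ earns its definition. If $\mathcal H^n(A\setminus\overline{B_{\mathcal R}})>0$, then $A$ carries positive perimeter or volume in the region where $f$ is radially strictly increasing, and a competitor obtained by rearranging that mass toward the origin (legitimate for sets of locally finite perimeter once phrased through spherical-cap symmetrization and the coarea formula) can be shown to strictly lower $\Per$ while preserving $\Vol$, contradicting optimality. Granting the containment, the weighted problem inside $\overline{B_{\mathcal R}}$ is the classical Euclidean one up to the constant factor $f(0)$, and the constraint of staying inside $\overline{B_{\mathcal R}}$ is inactive when $M<\Vol(B_{\mathcal R})$; so the rigidity of the Euclidean isoperimetric inequality forces $A$ to agree, up to a null set, with a Euclidean ball of radius $\rho\le\mathcal R$ contained in $\overline{B_{\mathcal R}}$ (and with $\overline{B_{\mathcal R}}$ itself when $M=\Vol(B_{\mathcal R})$, which is again a centred ball). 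This is exactly the second family in the theorem.

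In the regime $M>\Vol(B_{\mathcal R})$ the goal is $A=B_\rho$ up to a null set. The route I would take is to reopen the proof of Theorem~\ref{thm:main}: there $\Per(A)$ is estimated from below by $\Per(B_\rho)$ through a chain of comparisons, and the equality $\Per(A)=\Per(B_\rho)$ can be propagated backwards through that chain only if every comparison is an equality; one link is a symmetrization/monotonicity step that is an equality only for configurations with the rotational symmetry of a centred sphere, and, because $\rho>\mathcal R$, the strict monotonicity of $g$ along $\partial B_\rho$ makes that link genuinely rigid. Carrying out this backward equality analysis is the heart of the argument. As an alternative one can seek an intrinsic route: $\partial A$ is then a stable hypersurface of constant generalized mean curvature, and one would try to show directly that the only such hypersurfaces bounding a finite-volume region in this density --- using that $f$ is radially strictly increasing beyond $\mathcal R$ --- are centred spheres, for instance by a reflection argument adapted to the radial symmetry.

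The principal obstacle is precisely this rigidity in the generic regime: upgrading the non-strict inequality of Theorem~\ref{thm:main} to a strict one away from centred balls. It is delicate because the inequality is genuinely non-strict on a whole family of competitors, namely the arbitrarily placed sub-balls of the flat core $\overline{B_{\mathcal R}}$, so the equality analysis cannot be soft: it must isolate the flat core, attribute all of the slack in the inequality to the portion of the configuration lying where $g$ is strictly increasing, and show that this slack vanishes only for centred balls. A secondary technical point is to make the inward rearrangement of the flat regime rigorous for sets of merely locally finite perimeter, which is handled through Schwarz/spherical symmetrization together with a careful reading of its equality case.
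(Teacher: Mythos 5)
Your case division ($M\le\Vol(B_{\mathcal R(f)})$ versus $M>\Vol(B_{\mathcal R(f)})$, with the constant-density case set aside) matches the paper's, but in both regimes the step that carries the actual content is missing or would not work as stated. In the small-volume regime you claim that if $\mathcal H^n(A\setminus\overline{B_{\mathcal R(f)}})>0$ then ``rearranging that mass toward the origin'' strictly lowers $\Per$ while preserving $\Vol$. There is no soft rearrangement that does this: spherical-cap symmetrization preserves the $\mathcal H^{n-1}$-measure of $A\cap S_r$ on \emph{every} sphere and hence moves no mass radially, while any genuinely radial inward rearrangement changes the weighted volume (the density is larger where the mass came from), and a monotone rearrangement that simultaneously preserved weighted volume and strictly decreased weighted perimeter for a log-convex density would essentially prove the whole conjecture by itself. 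The paper obtains the containment $A\subseteq B_{\mathcal R(f)}$ quite differently: it applies Proposition \ref{prop:centered_ball} (every spherically symmetric minimizer with positive measure outside $B_{\mathcal R(f)}$ --- the ``distributed volume condition'' --- has boundary equal to $\partial B_M$) to the symmetrization $A^\star$, concludes that $A^\star$ would be a centred ball of radius exceeding $\mathcal R(f)$ and hence of weighted volume exceeding $M'\ge M$, and derives a contradiction. The containment is thus a corollary of the hard ODE analysis, not a preliminary to it.

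In the large-volume regime you defer everything to a ``backward equality analysis'' through the proof of Theorem \ref{thm:main}, which you describe as the heart of the argument but do not carry out. This also mislocates where uniqueness comes from in the paper: Theorem \ref{thm:main} is not proved by a chain of inequalities $\Per(A)\ge\dots\ge\Per(B_\rho)$ whose equality case must then be traced. Rather, for \emph{every} minimizer $A$, its spherical symmetrization $A^\star$ is again a minimizer, Proposition \ref{prop:centered_ball} forces $\partial A^\star=\partial B_M$, and then the definition of symmetrization (each slice $A^\star\cap S_r$ is a cap with $\mathcal H^{n-1}(A^\star\cap S_r)=\mathcal H^{n-1}(A\cap S_r)$, and for the centred ball that cap is the full sphere or empty) immediately returns $\partial A=\partial B_M$. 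Existence and uniqueness therefore emerge from the same argument, and no separate rigidity or stability/reflection analysis is needed. Your final step identifying the minimizers inside the flat core $B_{\mathcal R(f)}$ with arbitrarily placed Euclidean balls is consistent with the paper, but only once the containment has been established by the route above.
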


	To prove these statements, we use the observation in \cite{morg_iso_2} that
symmetrization of sets via spherical caps does not increase weighted
perimeter and does not change weighted volume (see the definitions in the next section).  This allows us to reduce our analysis to
the case of such sets.  Furthermore, due to the analysis in \cite{morg_reg_rie}, we may
assume that the boundary of such a set $A$ has a high level of regularity.  Since these sets are volumes of revolution, we can then
use the first variation formula derived in \cite{morg_iso_1} to produce an ODE that is satisfied by each 
curve which, when revolved, produces a component of $\partial A$.  The majority of the article is devoted to analyzing this ODE to prove Theorems
\ref*{thm:main} and \ref*{thm:uniqueness}.  We also note
that the methods here work for all log-convex densities $f(x) = e^{g(| x |)}$ with $g$
in the class $C^3$.  We also note that we do \emph{not} require $g$ to be strictly convex.

A similar approach has also solved another isoperimetric
problem involving $\mathbb{R}^n$ with density.  Specifically, the author along with W. Boyer, W. Brown, A. Loving and S. Tammen proved 
that isoperimetric sets in $\mathbb{R}^n$ with density $f(x) = |x|^p$ are balls whose boundaries pass through the origin.  Here, $p$ can be any positive real number.  This result will appear in an upcoming article.

	This approach of looking at isoperimetric regions that are spherically symmetric and the
associated ODE was used by Kolesnikov and Zhdanov in \cite{kolesnikov} to produce a partial solution
to Conjecture \ref*{conj:main}.  By using these methods in combination with others,
they proved that, if $g$ is strictly convex, then balls of large enough
volume around the origin are isoperimetric regions.
	There have been a number of other partial results in the direction of this conjecture.
For an overview of these, see \cite{fig}.

\bigskip \noindent {\bf Acknowledgments.}
This work was partially supported by 
an NSERC Canadian Graham Bell Graduate Scholarship, and by an Ontario Graduate Scholarship.  The author would like to thank
Almut Burchard, Yevgeny Liokumovich, Alexander Nabutovsky, Sarah Tammen, and Regina Rotman for insightful
conversations related to this problem, and for helpful comments in regard
to the manuscript.
He would also like to thank Frank Morgan for communicating this problem to him, and for many useful comments concerning the initial version of this
article.  Lastly, he would like to thank Frank Morgan, Aldo Pratelli, and Francesco Maggi for observing that the original proof
of the Second Tangent Lemma could be simplified by using the fact that the boundary of an isoperimetric minimizer is regular at a point if it locally lies in a half-space.

%%%%%%%%%%%%%%%%%%%%%%%%%%%%%%%%%%%%%%%%%%%%%%%%%%%%%%%%
\section{Structure of Proof}
%%%%%%%%%%%%%%%%%%%%%%%%%%%%%%%%%%%%%%%%%%%%%%%%%%%%%%%%

In this section, we shall describe the main ingredients of the proofs of Theorems \ref*{thm:main} and \ref*{thm:uniqueness}.
To prove Theorems \ref*{thm:main} and \ref*{thm:uniqueness}, we first note that, without loss of generality,
we may assume that the density function is not constant.
This is because, for a constant density $f = e^c$ on $\RR^n$, we have that
a set $A \subset \RR^n$ of weighted measure $M > 0$ is isoperimetric
if and only if $A$ is isoperimetric in $\RR^n$ with density $1$
among all sets of volume $\frac{M}{{e^c}}$.  This is a classically
solved problem - we know that, up to a set of measure $0$, a set is
isoperimetric if and only if it is a ball.
This proves Theorems
\ref*{thm:main} and \ref*{thm:uniqueness} if $f$ is constant.

For the rest of the article, let us assume that the density $f$ is not constant everywhere.
We see that, since $f$ is log-convex and non-constant, $\lim_{x \rightarrow \infty} f(x) = \infty$.
We can thus use Theorems 3.3 and 5.9 from \cite{morg_iso_2} to show that there exist
isoperimetric regions for each weighted volume $M$, and that the boundary of
each such region is bounded.  We now summarize known properties of
each isoperimetric set $A$.  From \cite{morg_reg_rie}, we have that $\partial A$ is
a smooth $n-1$ dimensional embedded manifold, except on a set of Hausdorff dimension at most $n-8$.
We will call a point $x \in \partial A$ \emph{regular} if $\partial A \cap U$ is an embedded $n-1$ dimensional
manifold, where $U$ is an open subset of $\mathbb{R}^n$ that contains $x$.  By the above comments, the set of points
that are not regular has Hausdorff dimension at most $n-8$.
We also have, by Theorem 6.5 in \cite{morg_iso_2}, that $\partial A$ is
mean curvature convex at all regular points, that is, the mean curvature is positive at
each regular point on $\partial A$.

Lastly, for every $x \in \partial A$, if there is a ball $B$ centered at $x$ such that $B \cap \partial A$
is located in a half-space with respect to an $n-1$ dimensional hyperplane through $x$, then $\partial A$ is regular at $x$.  This is due to the fact
that the oriented tangent cone at $x$ is in a half-space and $A$ is an isoperimetric minimizer, which in turn imply that the oriented tangent cone at $x$
is a hyperplane.  As a result, since the density $f$ is positive everywhere, $\partial A$ is regular at $x$ (see Proposition 3.5 and Remark 3.10 in \cite{morg_reg_rie}).
A corollary of this fact is
that $\partial A$ is regular at each point $x$ with
	$$|x| = \sup_{y \in \partial A } |y|.$$

Next, we analyze the first variation formula at each regular point in the same way as in \cite{morg_iso_2}.
We define a generalized mean curvature $H_f(x)$ at each regular point $x \in \partial A$ by
	$$ H_f(x) = H_0(x) + \frac{\partial h}{\partial \nu}(x)$$
where $H_0$ is the standard inward unaveraged mean curvature, and $\nu$ is the unit outward
normal at $x$.  Here, the function $h$ is defined by $f(x) = e^{h(x)}$.  
If $x \neq 0$, then this can be written as
$$H_f(x) = H_0(x) + g'(| x |) \frac{x}{| x |} \cdot \nu(x),$$
where $g$ is as above, that is, $f(x) = e^{g(|x|)}$.
Henceforth, we shall define $H_1(x) = \frac{\partial g}{\partial \nu}$, so that
$H_f = H_0 + H_1$.
Computing the first variation formula at each regular point $x$ and combining this with the fact
that $A$ is an isoperimetric region, we have that there is
some $c \in \RR$ such that $H_f(x) = c$ at each regular point $x \in \partial A$.

We summarize all of these results in the following theorem:
\begin{thm}[Existence and Regularity of Minimizers]
	\label{thm:regularity}
	For each weighted volume $M$, there exists an isoperimetric region $A$ of weighted
	volume $M$.

	For each such minimizer $A$, we have the following properties:
		\begin{enumerate}
			\item	$\partial A$ is bounded and is a smooth $n-1$ dimensional embedded manifold except on a set of Hausdorff dimension
				at most $n-8$.  Additionally, $\partial A$ is regular at every point at which $\partial A$ locally lies in a half-space, which includes
				all points of maximal magnitude.
			\item	$\partial A$ is mean curvature convex at all regular points.
			\item	For some constant $c \in \RR$, $H_f = c$ at all regular points of $\partial A$.
		\end{enumerate}
\end{thm}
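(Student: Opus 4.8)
The plan is to assemble Theorem~\ref*{thm:regularity} by quoting, in sequence, the external results already cited in the preceding discussion, checking in each case that their hypotheses are met by our density $f(x)=e^{g(|x|)}$. First I would dispose of existence: since $g$ is convex, even, and (by the reduction just made) non-constant, $f$ is proper and grows to infinity; I would cite Theorems~3.3 and~5.9 of \cite{morg_iso_2} to conclude that an isoperimetric region $A$ of weighted volume $M$ exists and that $\partial A$ is bounded. Next, for the interior regularity claim in part~(1), I would invoke the regularity theory of \cite{morg_reg_rie}: because $f$ is smooth and positive, $\partial A$ is a smooth embedded hypersurface away from a singular set of Hausdorff dimension at most $n-8$. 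For the half-space improvement, I would use the monotonicity/tangent-cone argument sketched above: if $\partial A\cap B$ lies in a half-space near $x$, the oriented tangent cone at $x$ is contained in a half-space, and minimality forces it to be a hyperplane; positivity of $f$ then upgrades this via Allard-type regularity (Proposition~3.5 and Remark~3.10 of \cite{morg_reg_rie}) to smoothness at $x$. The statement that points of maximal magnitude are such points is the elementary observation that at $x$ with $|x|=\sup_{y\in\partial A}|y|$, the set $A$ lies locally inside the ball $B_{|x|}(0)$, hence locally on one side of the tangent hyperplane to that sphere at $x$.

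For part~(2), mean-curvature convexity at regular points, I would cite Theorem~6.5 of \cite{morg_iso_2}, after noting that the hypotheses there — a density of the stated radial log-convex type, applied to an isoperimetric region — are exactly our setting; this gives $H_0(x)>0$ at every regular $x\in\partial A$ with the sign convention that $H_0$ is the inward unaveraged mean curvature. For part~(3), I would carry out the first variation computation at regular points as in \cite{morg_iso_2}: writing $f=e^h$, a normal variation with speed $u$ changes weighted perimeter to first order by $\int_{\partial A} (H_0 + \partial h/\partial\nu)\,u\,f\,d\mathcal{H}^{n-1}$ and weighted volume by $\int_{\partial A} u\,f\,d\mathcal{H}^{n-1}$; since $A$ minimizes perimeter at fixed volume, the Lagrange condition gives a constant $c$ with $H_f := H_0 + \partial h/\partial\nu \equiv c$ on the regular set. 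For $x\neq 0$ the chain rule turns $\partial h/\partial\nu$ into $g'(|x|)\,\frac{x}{|x|}\cdot\nu(x)$, matching the stated formula and the notation $H_1(x)=\partial g/\partial\nu$, $H_f=H_0+H_1$.

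I do not expect any genuine obstacle here: this theorem is a compilation, and each clause is a citation plus a routine verification of hypotheses, so the only real work is bookkeeping — making sure the sign conventions for $H_0$ and $\nu$ are consistent across \cite{morg_iso_2} and \cite{morg_reg_rie} and with the first variation formula, and confirming that "non-constant log-convex radial" is precisely the class for which the cited existence, boundedness, and mean-curvature-convexity results apply. The one point that deserves a careful sentence rather than a bare citation is the half-space regularity improvement, since it is the mechanism that will later guarantee smoothness of the generating curve at its outermost point (and hence let us start the ODE analysis there); I would spell out the tangent-cone argument explicitly, as the excerpt already does, rather than leave it implicit.
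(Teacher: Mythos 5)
Your proposal is correct and matches the paper's treatment essentially verbatim: the theorem is assembled by citing Theorems 3.3, 5.9, and 6.5 of \cite{morg_iso_2} and the regularity results of \cite{morg_reg_rie}, with the same tangent-cone argument for the half-space improvement and the same first-variation computation for $H_f=c$. No further comment is needed.
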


We will first restrict our analysis to a certain type of isoperimetric region.
Let $S_r$ be the $n-1$ dimensional centered sphere of radius $r \geq 0$.    We say
that a minimizer $A$ of weighted volume $M$ is \emph{spherically symmetric}
if, for any $r \geq 0$, $S_r \cap A$
is equal to a closed spherical cap whose center lies on the non-negative $e_1$ axis.  Additionally, if $S_r \cap A$ has $n-1$ dimensional Hausdorff
measure equal to $0$, then $S_r \cap A$ is empty.  Spherically
symmetric isoperimetric regions have all of the properties described in Theorem \ref*{thm:regularity}.
In particular, all irregular points on $\partial A$ lie on the $e_1$ axis (see below), and cannot attain the maximum
magnitude of $\partial A$.

We now work on proving the following theorem, which will aid us in completing the proof.
\begin{prop}
	\label{prop:centered_ball}
	If $f$ is not constant and $A$ is a spherically symmetric isoperimetric region about
	the $e_1$ axis with
		$$\Vol(A) = M$$
	and 
		$$\mathcal{H}^n(A \cap B^c_{\mathcal{R}(f)}) > 0,$$
	then $\partial A = \partial B_M$, where $B_M$ is the centered ball of weighted volume $M$.
\end{prop}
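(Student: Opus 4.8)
The plan is to replace the hypersurface $\partial A$ by the plane curves that generate it, to turn the constancy of $H_f$ from Theorem~\ref*{thm:regularity} into a first-order ODE for those curves, and then to show that the ODE is rigid enough that the only complete generating curve reaching outside $B_{\mathcal{R}(f)}$ is a semicircle centred at the origin; this immediately gives $\partial A=\partial B_M$.

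\textbf{Setup and the profile ODE.} Since $A$ is spherically symmetric about the $e_1$ axis, $\partial A$ is a hypersurface of revolution; writing $x\in\RR^n$ as $(t,\rho\,\omega)$ with $t\in\RR$, $\rho\ge0$, $\omega\in S^{n-2}$, let $\Gamma$ be the profile of $\partial A$ in the closed half-plane $\{(t,\rho):\rho\ge0\}$. By Theorem~\ref*{thm:regularity}(1) every singular point of $\partial A$ lies on the $e_1$ axis, so $\Gamma$ is a smooth embedded $1$-manifold inside $\{\rho>0\}$, and the half-space regularity criterion forces $\Gamma$ to meet $\{\rho=0\}$ orthogonally; thus $\Gamma$ is a finite union of smooth arcs with endpoints on the axis. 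On such an arc, parametrised by arc length $s$ as $(x(s),y(s))$ with $y\ge0$, write the unit tangent as $(x',y')=(\cos\psi,\sin\psi)$, take $\nu=(\sin\psi,-\cos\psi)$ for the unit normal out of $A$, and put $r=\sqrt{x^2+y^2}$. Computing the unaveraged mean curvature $H_0$ and the normal derivative $\partial g/\partial\nu$ for a surface of revolution and invoking Theorem~\ref*{thm:regularity}(3), the relation $H_f\equiv c$ becomes
\begin{equation}\label{eq:prof-ode}
	\psi'=c+(n-2)\frac{\cos\psi}{y}-g'(r)\,\frac{x\sin\psi-y\cos\psi}{r},\qquad x'=\cos\psi,\quad y'=\sin\psi .
\end{equation}
An arc of the centred sphere of radius $\rho$ solves \eqref{eq:prof-ode} with $c=\tfrac{n-1}{\rho}+g'(\rho)$; for instance an off-centred sphere fails to have constant $H_f$ as soon as $g'$ is positive on it (compare its two orthogonal meetings with the axis, where $x/r=\pm\nu$, using $g'\ge0$ and $g'$ non-decreasing), which is exactly why off-centred balls are isoperimetric only inside $B_{\mathcal{R}(f)}$. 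The real work is to exclude the \emph{non-spherical} solutions of \eqref{eq:prof-ode} --- the weighted analogues of Delaunay unduloids and nodoids.

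\textbf{The point of maximal magnitude, and the reduction.} Put $\rho=\sup_{x\in\partial A}|x|$. Since $A$ is bounded, the hypothesis $\mathcal{H}^n(A\cap B^c_{\mathcal{R}(f)})>0$ gives $\rho>\mathcal{R}(f)$, and since $g$ is even, convex and non-constant, $g'>0$ strictly outside $B_{\mathcal{R}(f)}$, so $g'(\rho)>0$. Let $p\in\Gamma$ realise this maximum. By Theorem~\ref*{thm:regularity}(1), $\partial A$ is regular at $p$; since $r|_\Gamma$ is maximal there, $\Gamma$ is tangent at $p$ to the centred sphere of radius $\rho$, and $\nu(p)=p/\rho$ because $A\subset\overline{B_\rho}$. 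Using the identity $\tfrac12(r^2)''=1-\psi'\,(x\sin\psi-y\cos\psi)$ along $\Gamma$ and evaluating \eqref{eq:prof-ode} at $p$, one finds $\tfrac12(r^2)''(p)=(n-1)-\rho c+\rho g'(\rho)\le0$, hence
\[
	c\;\ge\;\frac{n-1}{\rho}+g'(\rho)
\]
(the borderline case in which the maximum of $r$ is attained only on the axis is folded into the near-axis analysis below). Conversely, once the reverse inequality is known the proof is finished: with $c=\tfrac{n-1}{\rho}+g'(\rho)$, system \eqref{eq:prof-ode} has a unique solution through $p$ with tangent direction $\psi(p)$, the arc of the centred sphere of radius $\rho$ is such a solution, so $\Gamma$ agrees with it; being a complete profile curve meeting the axis orthogonally, $\Gamma$ is then the centred semicircle, $\partial A=\partial B_\rho$, and $\Vol(B_\rho)=M$ pins down $\rho$.

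\textbf{The ODE analysis (the crux, and the main obstacle).} What remains --- excluding $c>\tfrac{n-1}{\rho}+g'(\rho)$, equivalently pinning down the global shape of $\Gamma$ --- is the bulk of the argument and the step I expect to be hardest. I would study the support function $\gamma(s)=x(s)\sin\psi(s)-y(s)\cos\psi(s)=r\langle r^{-1}x,\nu\rangle$ along $\Gamma$ through the identities $\gamma'=\psi'\,(x\cos\psi+y\sin\psi)$ and $\gamma^2+(x\cos\psi+y\sin\psi)^2=r^2$; the second shows $\gamma=\pm r$ exactly at the critical points of $r$, i.e. exactly where $\Gamma$ is tangent to a circle centred at the origin. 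Using the monotonicity of $g'$ --- the one place where log-convexity is essential --- one establishes the sign and monotonicity behaviour of $\gamma$, $r$ and $\psi$ along $\Gamma$ (these are the ``Properties of $\gamma$''); a Klein-type comparison lemma then promotes these infinitesimal facts to the global statement that $r$ is unimodal along each arc of $\Gamma$, with a single interior maximum and at most one interior minimum away from the axis endpoints. The Tangent Lemmas remove the last freedom: they exclude an interior minimum of $r$ off the axis, and they show that if $c>\tfrac{n-1}{\rho}+g'(\rho)$ then $\Gamma$, trapped in $\overline{B_\rho}$ but driven strictly inward at $p$, would have to lose embeddedness or reach the axis non-orthogonally, contradicting Theorem~\ref*{thm:regularity}(1) --- here one uses that a curve lying locally on one side of its tangent line is contained in a half-space and is therefore regular. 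The technically delicate points are the control of \eqref{eq:prof-ode} near the axis, where $(n-2)\cos\psi/y$ is singular, and the treatment of the non-strictly-convex case, where $g'$ may vanish on part of the relevant range of $r$. With $c=\tfrac{n-1}{\rho}+g'(\rho)$ in hand, the previous paragraph completes the proof.
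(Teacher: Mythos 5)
Your setup and reduction are sound and essentially coincide with the paper's: pass to the profile curve, turn $H_f\equiv c$ into an ODE, observe that at the point of maximal magnitude the curve is tangent to the centred circle of radius $\rho$ so that $c=(n-1)\kappa+g'(\rho)\ge (n-1)/\rho+g'(\rho)$, and note that equality forces $\Gamma$ to coincide with the centred circle by ODE uniqueness (this is the paper's Lemma \ref*{lem:C_x_equality}). Two small cautions there: for a spherically symmetric region the maximal point always lies \emph{on} the positive $e_1$ axis (the caps $A\cap S_r$ shrink to the axis as $r\to\rho^-$), so your evaluation of the ODE at $p$ is not a "borderline case" but the only case, and both the inequality and the uniqueness step must be run through the singular limit $(n-2)\cos\psi/y\to(n-2)\kappa$ at the axis. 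These are repairable.

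The genuine gap is the step you yourself flag as the crux: excluding $c>(n-1)/\rho+g'(\rho)$. This is not a remainder to be polished; it is the entire content of the paper's Sections 3 and 4, and your sketch does not contain the mechanism that makes it work. You assert that the support function and the monotonicity of $g'$ yield "unimodality of $r$" and that "Tangent Lemmas" then force a loss of embeddedness or a non-orthogonal axis crossing, but none of this is derived, and unimodality of $r$ is not the obstruction anyway — spherical symmetry already gives $|\gamma|$ non-increasing from the maximal point (the paper's Tangent Restriction). What actually has to be shown is quantitative: following the curve from the maximal axis point, it first rises with tangent in the second quadrant (the "upper curve", on which one proves $\kappa\ge\kappa(C_x)>0$ and that the centre abscissa $F$ of the canonical circle is non-decreasing) until the tangent becomes horizontal; then, reflecting the upper curve across the vertical line through that turning point, one compares points at equal height and shows via the admissibility condition and the $H_1$ Comparison Theorem (Proposition \ref*{prop:normal_computation}) that $\partial g/\partial\nu$ is \emph{strictly} smaller on the descending branch, whence $H_f=\text{const}$ forces strictly larger curvature there; the Curvature Comparison Theorem (Proposition \ref*{prop:curvature_comparison}) integrates this into the statement that the descending branch reaches a vertical tangent $(0,-1)$ with $\kappa>0$ before returning to the axis (First Tangent Lemma); finally, mean-curvature convexity plus the half-space regularity criterion force the tangent angle to keep increasing until it reaches $(0,1)$ (Second Tangent Lemma), contradicting $\gamma'\cdot N\le 0$. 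Without the strict comparison of the normal derivative of $g$ at reflected points — the one place log-convexity enters with teeth — nothing in your outline rules out a closed non-centred "weighted nodoid" profile, which is exactly the hard case. The non-strictly-convex regime (where $g'\equiv 0$ on $B_{\mathcal{R}(f)}$ and the curve can coincide with an off-centred circle inside that ball) needs a separate argument as well, which the paper supplies via Lemma \ref*{lem:boundary_points}; your proposal does not address it.
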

This characteristic will be called the \emph {distributed volume condition}.
We will use the properties of isoperimetric regions described in Theorem \ref*{thm:regularity} to prove this theorem.

We deal first with the case of $n = 1$, which is very straightforward. This is because in this dimension
$H_f(x) = \frac{\partial g} {\partial \nu}(x)$ at every regular point of $\partial A$.
Since $A$ must clearly consist of one interval up to a set of measure $0$, $\partial A$ consists of
the endpoints of this interval.  The fact that $H_f = c$ at each of these endpoints immediately implies that the interval is centered
at the origin, since at least one of the endpoints lies in a region where $g'$ is not 0.

We are left with the case of $n \geq 2$.  To prove Proposition \ref*{prop:centered_ball}, we fix
$n \geq 2$, and choose a spherically symmetric minimizer $A$ of weighted volume $M > 0$.  Since it is spherically symmetric,
there is a closed spherically symmetric set $C \subset \RR^2$  such that, if $\partial C$ is rotated about the $e_1$ axis, we obtain $\partial A$.  We observe that
$\partial C$ is regular at all points that attain the maximum magnitude, and at all points that do not lie on the $e_1$ axis.
Indeed, if there is a point $x \in \partial C$ which is not regular and which does not lie on the $e_1$ axis, then $\partial A$ contains a set of points
of Hausdorff dimension $n - 2$ which are not regular.  Since such a set must have Hausdorff dimension of at most $n - 7$, this is impossible.

Furthermore, the perimeter of $C$ is finite.
All of these properties are directly inherited from $A$.   We would like to
identify a certain continuous curve in $\partial C$.  To identify this curve, we begin by defining $x^*$ to be the
unique point on the positive $e_1$ axis with
	$$|x^*| = \sup_{y \in \partial C} |y|.$$
Since $A$ has positive measure and is spherically symmetric about the non-negative $e_1$ axis, such a point exists and is unique.  Furthermore,
$\partial C$ is locally a $1$-dimensional embedded smooth manifold around $x^*$.  We can use the
spherically symmetric nature of $C$ show that the tangent space of this manifold at $x^*$ is the collection of all multiples
of $e_2$ attached to $x^*$.  As such, this manifold locally only intersects the $e_1$ axis at $x^*$.  Since $\partial C$ is
a smooth embedded manifold at all points that do not lie on the $e_1$ axis, and since it has finite length, we can follow the curve leaving 
$x^*$ in both directions until it intersects the $e_1$ axis at some other point.  This produces a continuous
curve
	$$ \gamma: [-\beta, \beta] \rightarrow \RR^2 $$
with $\beta > 0$.  $\gamma$ has the following properties, where $\gamma = (\gamma_1, \gamma_2)$:

	\begin{enumerate}
		\item $\gamma$ lies on $\partial C$.  
		\item $\gamma(0) = x^*$.
		\item $\gamma(x) \neq \gamma(y)$ unless $x = y$ or $x, y \in \{ \beta, -\beta \}$.  In order words, $\gamma$ forms a simple closed curve.
		\item $\gamma_2 > 0$ on $(0, \beta)$, $\gamma_2 < 0$ on $(-\beta, 0)$, and $\gamma_2 = 0$ at $0$, $-\beta$ and $\beta$.
		\item $\gamma$ is smooth on $(-\beta, \beta)$.
		\item $|\gamma'| = 1$ on $(-\beta, \beta)$.
		\item $\gamma$ is a counterclockwise parametrization.  In particular, since $C$ is
		spherically symmetric,
			$$ | \gamma(x) | \leq | \gamma(y) | $$
			for $x \geq y \geq 0$, and
			$$|\gamma(0)| = |x^*| > \mathcal{R}(f).$$
		\item $\gamma(x) = \gamma(-x)$ for $x \in [0, \beta]$.
	\end{enumerate}
This is shown in Figure \ref*{fig:gamma}.

\begin{figure}[ht]
  \caption{$C$, $\partial C$, $\gamma$, and $\mathcal{K}$.}
  \centering
    \includegraphics[width=1.00\textwidth]{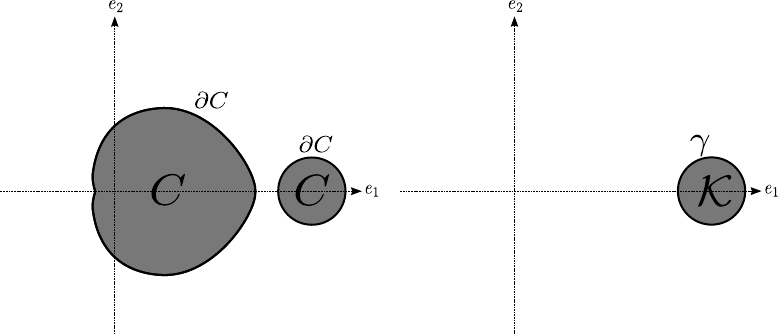}
  \label{fig:gamma}
\end{figure}

Let $\mathcal{K} \subset \RR^2$ be the closed, bounded region enclosed by $\gamma$.  Since $\gamma$ is a simple closed curve, this is well defined.  Due to the fact that $f$ is log-convex,
$f$ is nondecreasing as a function of radius.  Thus, the isoperimetric profile of $\mathbb{R}^n$ with density $f$ is nondecreasing; see Theorem 4.3 in \cite{morg_iso_2}.  In other words, if $\mathcal{J}(V)$
is the minimal perimeter required to enclose volume $V$, then $\mathcal{J}$ is nondecreasing.  This implies that $\mathcal{K} \subset C$, as if it does not, then consider the set $Q \subset \mathbb{R}^n$
defined as the union of $A$ and the result of $\mathcal{K}$ rotated about the $e_1$ axis.  This set will have less perimeter than $A$, but will have larger volume.  Thus, if $V$ is the volume of $A$ and
$V'$ is the volume of $Q$, then $\mathcal{J}(V') < \mathcal{J}(V) = \Per(A)$.  This, however, is a contradiction, since $V' > V$.  The fact that $\mathcal{K} \subset C$ will be useful later in
the argument.

Due to the fact that $C$ is spherically
symmetric, $\mathcal{K}$ is spherically symmetric as well.  When we describe inward and outward normal vectors on $\gamma$, we will be doing
so with respect to $\mathcal{K}$.

We define some notation concerning $\gamma$ on $(-\beta, \beta)$.
	\begin{enumerate}
		\item $H_0(x)$ is the inward unaveraged mean curvature of $\partial A$ at any point that corresponds to $\gamma(x)$, and $H_f(x)$ is the 
			generalized mean curvature of $\partial A$ at any point that corresponds to $\gamma(x)$.  Note that all points corresponding to $x$
			will have the same mean curvature and generalized mean curvature since $A$ is spherically symmetric.
		\item $\kappa(x)$ is the inward curvature of $\gamma$ at $\gamma(x)$.
		\item $n(x)$ is the unit outward normal vector to $\gamma$ at $\gamma(x)$.
		\item $g'(x)$ denotes $ g'( | \gamma(x) | )$.
		\item If $\gamma(x) \neq (0,0)$, then $N(x) = \frac{\gamma(x)}{| \gamma(x) |}$.  This in the unit outward normal
			at $\gamma(x)$ to the centered circle which goes through $\gamma(x)$.
	\end{enumerate}
Note that all of these quantities are defined on $(-\beta, \beta)$.

We will require one more property of $\gamma$, which is control over its tangent vector close to $\beta$.
\begin{lem}
	\label{lem:gamma_behavior_beta}
	If $\lim_{x \rightarrow \beta^-} \gamma'(x)$ exists and is equal to $\nu = (\nu_1, \nu_2)$, then $\nu_1 \leq 0$. 
\end{lem}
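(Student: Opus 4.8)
The plan is to argue by contradiction. Assume $\nu_1>0$; I will deduce that $\partial A$ is regular, in the sense of Theorem~\ref*{thm:regularity}, at the point $p=(\gamma_1(\beta),0,\dots,0)\in\partial A$ obtained by revolving $q:=\gamma(\beta)=\gamma(-\beta)$ about the $e_1$-axis, and then that this regularity forces $\nu_1=0$, a contradiction. For the preliminary observations, $|\gamma'|=1$ on $(-\beta,\beta)$ gives $|\nu|=1$, and $\gamma_2>0$ on $(0,\beta)$ with $\gamma_2(\beta)=0$ gives $\nu_2=\lim_{x\to\beta^-}\gamma_2'(x)\le 0$. Since $\gamma_1'\to\nu_1>0$, the function $\gamma_1$ is strictly increasing on $[\beta-\varepsilon,\beta]$ for some $\varepsilon>0$, so the arc $\gamma((\beta-\varepsilon,\beta))$ lies in the open half-plane $\{z_1<\gamma_1(\beta)\}$; by the symmetry of $C$, hence of $\gamma$, about the $e_1$-axis, so does the arc $\gamma((-\beta,-\beta+\varepsilon))$.

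The central step is to show that $\partial A$ locally lies in the half-space $\{z_1\le\gamma_1(\beta)\}$. Since $\gamma$ is simple, $q$ has exactly the two $\gamma$-preimages $\pm\beta$, so for small $\rho>0$ the set $\partial\mathcal K\cap B(q,\rho)$ is precisely the union of the two sub-arcs above, and in particular $\partial\mathcal K\cap B(q,\rho)\subset\{z_1\le\gamma_1(\beta)\}$. Hence $B(q,\rho)\cap\{z_1>\gamma_1(\beta)\}$ meets no point of $\partial\mathcal K$; being connected and meeting the interior of $\mathcal K$ --- the open segment of the $e_1$-axis between $q$ and $x^*=\gamma(0)$ lies in $\mathcal K$ (as $\gamma$ meets $\{z_2=0\}$ only at $q$ and $x^*$), and $\gamma_1(\beta)<|x^*|$ because $\gamma$ is simple and $|y|\le|x^*|$ for all $y\in\partial C$ --- it is contained in $\mathcal K$. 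Revolving about the $e_1$-axis and using the inclusion $\mathcal K\subset C$ established earlier, we obtain $B(p,\rho)\cap\{z_1>\gamma_1(\beta)\}\subset A$; so this set contains no point of $\partial A$, that is, $\partial A\cap B(p,\rho)\subset\{z_1\le\gamma_1(\beta)\}$, and the bounding hyperplane of this half-space passes through $p$.

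By part (1) of Theorem~\ref*{thm:regularity}, $\partial A$ is therefore regular at $p$, hence a smooth embedded hypersurface near $p$. But $\partial A$ is invariant under all rotations of $\RR^n$ fixing the $e_1$-axis, and a smooth surface of revolution through an axis point must meet the axis orthogonally there --- any other angle makes $\partial A$ singular at $p$ --- so the generating curve $\partial C$ has tangent line $\{z_1=\gamma_1(\beta)\}$ at $q$, forcing $\nu=\gamma'(\beta^-)$ to be a multiple of $e_2$, i.e.\ $\nu_1=0$. This contradicts $\nu_1>0$, so $\nu_1\le 0$.

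The step I expect to be the main obstacle is the central one: reading off the local shape of $\mathcal K$ near the corner $q$ and carrying it through the revolution, in particular verifying that $\partial\mathcal K\cap B(q,\rho)$ really is just the two sub-arcs (so nothing of $\partial\mathcal K$ enters $\{z_1>\gamma_1(\beta)\}$) and that it is precisely the earlier inclusion $\mathcal K\subset C$ that lets one pass from $\mathcal K$ to $A$. The degenerate subcase $\nu=(1,0)$ falls under the same argument, and in dimension $n=2$ the final symmetry step needs the extra remark that a smooth, $e_1$-axis-symmetric $\partial C$ cannot be tangent to the axis at $q$, since it would then coincide locally with the axis and violate $\gamma_2>0$ on $(0,\beta)$. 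Everything past the half-space reduction --- invoking Theorem~\ref*{thm:regularity} and using rotational symmetry --- is routine.
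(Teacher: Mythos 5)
Your proposal is correct and follows essentially the same route as the paper's proof: assume $\nu_1>0$, use the fact that near $\gamma(\beta)$ the curve lies to the left of the vertical line through $\gamma(\beta)$ together with $\mathcal{K}\subset C$ to conclude that $\partial A$ locally lies in a half-space, invoke Theorem~\ref*{thm:regularity} to get regularity at the corresponding axis point, and contradict this with the conical singularity a surface of revolution would have there if $\nu_1>0$. The only difference is that you spell out the half-space reduction and the final symmetry step in more detail than the paper does.
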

\begin{proof}
 	If $\nu_1 > 0$, then $\partial A$ is not regular at the point $x \in \partial A$ that corresponds to $\gamma(\beta)$.	     Furthermore, near $\gamma(\beta)$, the image of $\gamma$ lies to the left of the vertical line through
	$\gamma(\beta)$. Since $\mathcal{K} \subset C$ and $\gamma_1(\beta) < \gamma_1(0)$, this implies that, near $x$,
	$\partial A$ lies in a half-space.  As such, due to Theorem \ref*{thm:regularity}, $\partial A$ is regular
	at $x$.  This is a contradiction, and so $\nu_1 \leq 0$.
\end{proof}

We will complete the proof of Proposition \ref*{prop:centered_ball} by proving
the following two theorems.  Here, we assume the same hypotheses as in Proposition \ref*{prop:centered_ball}, and that $n \geq 2$.

\begin{lem}[First Tangent Lemma]
	\label{lem:first_tangent}
	Either $\gamma$ is a centered circle, or
	there is a point $x \in (0, \beta)$ with $\gamma'(x) = (0,-1)$, and $\kappa(x) > 0$.
\end{lem}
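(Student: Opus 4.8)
The plan is to argue by contradiction: suppose $\gamma$ is not a centered circle, and suppose that there is no point $x \in (0,\beta)$ with $\gamma'(x) = (0,-1)$ and $\kappa(x) > 0$. The curve $\gamma$ leaves $x^* = \gamma(0)$ with tangent vector $\pm e_2$; since $\gamma_2 > 0$ on $(0,\beta)$ and the parametrization is counterclockwise, we have $\gamma'(0) = (0,1)$ on the side $x>0$ (more precisely the relevant one-sided derivative). At the other end, Lemma \ref*{lem:gamma_behavior_beta} tells us that if $\lim_{x\to\beta^-}\gamma'(x)$ exists it has non-positive first component. First I would write $\gamma'(x) = (\cos\theta(x), \sin\theta(x))$ for a smooth angle function $\theta$ on $(0,\beta)$, with $\theta \to \pi/2$ as $x\to 0^+$. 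The condition that $\gamma'$ never equals $(0,-1)$ on $(0,\beta)$ means $\theta(x)$ never hits $-\pi/2$ or, more carefully, $3\pi/2$ (depending on how the winding unfolds); so $\theta$ stays in an interval that excludes the downward direction. The geometric content is that the tangent vector cannot "point straight down," so as the curve descends from the top of $C$ back to the $e_1$-axis, it must turn through a restricted set of directions.

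The core of the argument is to combine this with the curvature constraint $H_f = c$ and mean-curvature convexity. The curve $\gamma$ satisfies $H_f(x) = \kappa(x) + (n-2)\,\frac{n(x)\cdot e_2}{\gamma_2(x)} \cdot(\text{correction}) + g'(x)\, N(x)\cdot n(x) = c$ — i.e., the generalized mean curvature of the surface of revolution decomposes into the planar curvature $\kappa$ of $\gamma$, the contribution $(n-2)$ times the "circular" curvature from revolution, and the density term $g'(x)\,N(x)\cdot n(x)$. Since $|\gamma(x)| > \mathcal{R}(f)$ near $x=0$, we have $g'(|\gamma(0)|) > 0$ strictly. Mean-curvature convexity gives $H_0(x) > 0$ at all regular points, hence $H_f(x) = H_0(x) + g'(x)\,N(x)\cdot n(x)$; and evaluating near the top, where $n(x^*) = N(x^*) = e_1$-direction... wait, at $x=0$, $n(0) = (1,0) = \gamma(0)/|\gamma(0)| = N(0)$, so $H_f(0) = H_0(0) + g'(0) > 0$, pinning down $c > 0$. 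Then I would track how $n(x)\cdot N(x)$ and $\kappa(x)$ must behave: if $\gamma'$ never reaches $(0,-1)$, then $n(x)$ stays in a cone bounded away from $-e_1$, which forces $N(x)\cdot n(x)$ to keep a definite sign over a long arc, and the equation $H_f = c$ then forces either $\kappa$ or the revolution term to misbehave. The contradiction should come from a monotonicity/comparison argument showing the curve is trapped on the wrong side and cannot return to the axis, or returns with the wrong tangent, violating Lemma \ref*{lem:gamma_behavior_beta}. Alternatively — and I suspect this is the intended route — one compares $\gamma$ with the centered circle through $x^*$: the ODE $H_f = c$ has the centered circle of the appropriate radius as a solution, and a uniqueness/ordering argument for this ODE shows that if $\gamma$ ever departs from that circle then its tangent must rotate past $(0,-1)$ with positive curvature before it can get back down to the axis.

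The main obstacle will be making the "tangent must pass through $(0,-1)$" step rigorous without already having the full ODE analysis of later sections — in particular, handling the possibility that $\gamma$ oscillates, or that $\kappa$ changes sign, or that $\gamma'$ approaches $(0,-1)$ only in the limit $x \to \beta$ without attaining it. I would handle the limiting case by noting that $\gamma'(\beta^-)$, if it exists, has $\nu_1 \le 0$ by Lemma \ref*{lem:gamma_behavior_beta}, and combine this with $\gamma_2(\beta) = 0$, $\gamma_2 > 0$ on $(0,\beta)$ — so $\gamma_2$ is decreasing near $\beta$, giving $\sin\theta(\beta^-) \le 0$; together with $\nu_1 \le 0$ this forces $\theta(\beta^-) \in [\pi/2 + \text{something}, \text{...}]$, i.e., the tangent has rotated at least to the downward-left region, hence passed through $(0,-1)$ unless it went the "short way," which one rules out using the counterclockwise orientation and the fact that the curve stays in the upper half-plane. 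The curvature positivity $\kappa(x) > 0$ at such a crossing point should follow because at a point where $\gamma' = (0,-1)$ the outward normal is $n = (-1,0)$... no, $n = (1,0)$ for a counterclockwise curve — and then $N(x) \cdot n(x) = \gamma_1(x)/|\gamma(x)|$, and plugging into $H_f(x) = \kappa(x) + (n-1\text{-type term}) + g'(x)\gamma_1(x)/|\gamma(x)| = c$ together with the sign of the other terms isolates $\kappa(x) > 0$; I would defer the precise bookkeeping to the formal proof.
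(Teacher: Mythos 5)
Your proposal is a plan rather than a proof, and the plan is missing the central mechanism that makes the First Tangent Lemma true. The lemma is genuinely delicate: for a centered circle the tangent reaches $(0,-1)$ only at the endpoint $\beta$, and Lemma \ref{lem:gamma_behavior_beta} permits $\lim_{x\to\beta^-}\gamma'(x)$ to lie anywhere in the third quadrant, so there is no topological reason the tangent must pass through $(0,-1)$ strictly inside $(0,\beta)$. Your angle-function argument ("the tangent has rotated at least to the downward-left region, hence passed through $(0,-1)$ unless it went the short way") therefore cannot close: a curve descending to the axis with limiting tangent $(-\tfrac{1}{\sqrt2},-\tfrac{1}{\sqrt2})$, say, satisfies every constraint you invoke and never has tangent $(0,-1)$. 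Your fallback, comparing $\gamma$ with the centered circle through $x^*$ via "uniqueness/ordering for the ODE $H_f=c$," also does not work as stated: that circle is a solution of $H_f = \text{const}$ for \emph{its own} constant, not for the constant $c$ determined by $\gamma$, so there is no common ODE to which a comparison principle applies.

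What the paper actually does is compare the curve with itself. It isolates the maximal initial arc on which $\gamma'$ stays in the second quadrant with $\kappa \geq \kappa(C_x) > 0$ and the canonical-circle center moving rightward (the upper curve), shows this arc terminates at an interior point $\delta$ with $\gamma'(\delta)=(-1,0)$ and $\gamma_1(\delta)>0$, and then compares each point $z$ of the descending arc (the lower curve) with the point $\bar z$ of the upper curve at the same height. The admissibility geometry (Definition \ref{defn:admissible}) together with the $H_1$ Comparison Theorem and the monotonicity of $g'$ yields the strict inequality $\tfrac{\partial g}{\partial\nu}(z) < \tfrac{\partial g}{\partial\nu}(\bar z)$, and $H_f=c$ then forces $\kappa(z)>\kappa(\bar z)$: the lower curve bends strictly faster than the (reflected) upper curve, so by the Curvature Comparison Theorem its tangent must become vertical at some $\eta<\beta$, which is the desired point. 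This self-comparison is where log-convexity enters quantitatively, and it is also what distinguishes the centered-circle case (where the comparison is an equality) from the general case; your sketch contains no substitute for it, nor for the separate treatment of the region $B_{\mathcal{R}(f)}$ where $g'$ vanishes and the ODE degenerates to the constant-density case. These are not bookkeeping details to be deferred; they are the proof.
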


\begin{lem}[Second Tangent Lemma]
	\label{lem:second_tangent}
	If there exists a point $x \in (0, \beta)$ with $\gamma'(x) = (0,-1)$ and $\kappa(x) > 0$, then
	there is another point $y \in (0, \beta)$ with $\gamma'(y) = (0,1)$.
\end{lem}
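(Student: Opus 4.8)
The plan is to argue by contradiction: suppose no $y\in(0,\beta)$ satisfies $\gamma'(y)=(0,1)$. The main tool is the tangent angle. Since $\gamma'(0)=(0,1)$ and $\gamma$ is smooth on $(-\beta,\beta)$, fix the continuous function $\theta\colon[0,\beta)\to\RR$ with $\gamma'(s)=(\cos\theta(s),\sin\theta(s))$ and $\theta(0)=\tfrac\pi2$; then $\theta'=\kappa$, while $\gamma'(s)=(0,-1)$ exactly when $\theta(s)\in\tfrac{3\pi}2+2\pi\mathbb Z$ and $\gamma'(s)=(0,1)$ exactly when $\theta(s)\in\tfrac\pi2+2\pi\mathbb Z$. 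The mean-curvature formula for a hypersurface of revolution, $H_0=\kappa-(n-2)\gamma_1'/\gamma_2$, evaluated at $s=0$ (a removable $0/0$) together with mean-curvature convexity (Theorem \ref*{thm:regularity}(2)) gives $\kappa(0)=H_0(0)/(n-1)>0$, so $\theta$ increases at $0$ into $(\tfrac\pi2,\tfrac{5\pi}2)$. I would also note that $H_f\equiv c$ evaluated at $x^*$, together with the distributed-volume hypothesis $|x^*|>\mathcal R(f)$, gives $c=H_0(0)+g'(|x^*|)>0$.

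The first step is to confine $\theta$ to $(\tfrac\pi2,\tfrac{5\pi}2)$ on all of $(0,\beta)$: by assumption $\theta$ never meets $\tfrac\pi2+2\pi\mathbb Z$ on $(0,\beta)$, and it is continuous and starts in $(\tfrac\pi2,\tfrac{5\pi}2)$, so it cannot leave. Hence $\theta(x_0)=\tfrac{3\pi}2$ and $\theta'(x_0)=\kappa(x_0)>0$, so $\theta>\tfrac{3\pi}2$ just after $x_0$; and $\gamma_1''(x_0)=-\kappa(x_0)\gamma_2'(x_0)=\kappa(x_0)>0$, so $\gamma_1$ has a strict local minimum at $x_0$. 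Next I would pin down the behaviour as $s\to\beta^-$: using Lemma \ref*{lem:gamma_behavior_beta} and the fact that $\partial A$ is regular wherever it locally lies in a half-space — which forbids a corner of $\partial A$ at $\gamma(\beta)$ — one shows $\lim_{s\to\beta^-}\gamma'(s)$ exists and, since $\gamma_2>0$ on $(0,\beta)$, equals $(0,-1)$; i.e.\ $\theta(s)\to\tfrac{3\pi}2$.

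Combining, $\theta$ leaves $\tfrac{3\pi}2$ upward at $x_0$ but returns to $\tfrac{3\pi}2$ by $\beta$; let $z\in(x_0,\beta)$ be the first return. Then $\theta\in(\tfrac{3\pi}2,\tfrac{5\pi}2)$ on $(x_0,z)$, so $\gamma_1'=\cos\theta>0$ there and $\gamma_1$ is strictly increasing on $(x_0,z)$; and $\gamma'(z)=(0,-1)$ with $\kappa(z)=\theta'(z)\le0$. At an interior point where $\gamma'=(0,-1)$, the rotational term drops and the curvature identity becomes $\kappa=c+g'(|\gamma|)\,\gamma_1/|\gamma|$; applied at $z$ (with $\kappa(z)\le0<c$) it forces $g'(|\gamma(z)|)>0$ and $\gamma_1(z)<0$, and since $\gamma_1$ increases up to $\gamma_1(z)$ on $(x_0,z)$ we get $\gamma_1<0$ on all of $[x_0,z]$, in particular $\gamma_1(x_0)<0$.

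The contradiction then has to be squeezed out of $\kappa=c+g'(|\gamma|)\gamma_1/|\gamma|$ at $x_0$ (where $\kappa>0$) and at $z$ (where $\kappa\le0$), using convexity of $g$ (so $g'\ge0$ is nondecreasing and $g'(r)>0\iff r>\mathcal R(f)$) and the monotone decay of $|\gamma|$ along the curve (property (7)); one can also feed in that $\partial A$ is regular at $\gamma(\beta)$, giving $H_0(\gamma(\beta))\ge0$ and hence $g'(|\gamma_1(\beta)|)\le c$. I expect the crude two-point comparison $x_0$ versus $z$ not to close on its own — when $\gamma_1(x_0)$ and $\gamma_1(z)$ are both negative, the monotonicity of $g'$ points the wrong way — so the real work is a finer analysis of the generating ODE along the whole arc from $x_0$ to $\beta$ (for instance integrating the curvature identity, or comparing $\gamma$ with the centred circle through $\gamma(z)$), or else iterating the construction of further points with $\gamma'=(0,-1)$ to contradict smoothness of the solution of the ODE. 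Turning the pointwise ODE data plus the global monotonicity into an honest contradiction is the main obstacle; everything upstream of it is comparatively soft.
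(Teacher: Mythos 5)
There is a genuine gap, and you concede it yourself: the final contradiction is never derived, and you describe extracting it from the first-variation identity $\kappa = c + g'(|\gamma|)\gamma_1/|\gamma|$ as "the main obstacle." The missing idea is that this lemma does not need the constant-$H_f$ ODE at all; it needs property (2) of Theorem \ref{thm:regularity} (mean curvature convexity) together with Proposition \ref{prop:mean_curvature}. Once $\gamma'$ lies in the first or fourth quadrant, the canonical circle $C_x$ is clockwise-oriented or a vertical line, so $\kappa(C_x) \leq 0$, and hence $\kappa(x) = H_0(x) - (n-2)\kappa(C_x) \geq H_0(x) > 0$. So after the given point $P$ with $\gamma'(P)=(0,-1)$, the tangent angle is \emph{strictly increasing} for as long as $\gamma'$ stays in the fourth or first quadrant; it therefore never returns to $\tfrac{3\pi}{2}$, and (under your contradiction hypothesis that it never reaches $\tfrac{\pi}{2} \bmod 2\pi$ either) it converges to a limit strictly inside the fourth quadrant, i.e.\ $\lim_{s\to\beta^-}\gamma'(s)=\nu$ exists with $\nu_1>0$. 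That contradicts Lemma \ref{lem:gamma_behavior_beta} and finishes the proof. Your intermediate assertion that $\lim_{s\to\beta^-}\gamma'(s)=(0,-1)$ is both unjustified (you give no argument that the limit exists, and Lemma \ref{lem:gamma_behavior_beta} is conditional on existence) and in fact the opposite of what the monotonicity argument yields.

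Note also that even inside your own framework the contradiction was one line away: at your first-return point $z$ you correctly get $\gamma'(z)=(0,-1)$ and $\kappa(z)=\theta'(z)\leq 0$, but there $\kappa(C_z)=0$ (vertical line), so $H_0(z)=\kappa(z)\leq 0$, directly violating mean curvature convexity. No analysis of $g'$, of the sign of $\gamma_1(z)$, or of the generating ODE along the arc is needed; the detour through the curvature identity at isolated points is a dead end, as you suspected.
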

These are shown in Figure \ref*{fig:lemmas}.

\begin{figure}[ht]
  \caption{Points $x$ and $y$ from Lemmas \ref*{lem:first_tangent} and \ref*{lem:second_tangent}, respectively.}
  \centering
    \includegraphics[width=1.00\textwidth]{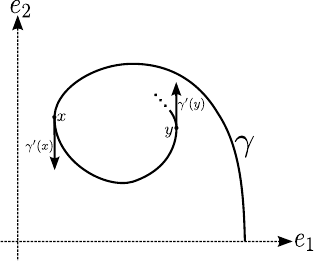}
  \label{fig:lemmas}
\end{figure}

We now observe that such a point $y$ described in Lemma \ref*{lem:second_tangent} cannot exist.  This is 
because of the following lemma, which is a simple result of the fact that $\mathcal{K}$ is spherically symmetric about $e_1$.
In particular, this fact implies that $|\gamma|$ is a non-increasing function on $[0, \beta)$ (see property $6$), and so by differentiating $|\gamma|$,
we obtain the following result. 

\begin{lem}[Tangent Restriction Theorem]
	\label{lem:tangent_restriction}
	For every $x \in (0, \beta)$, $N(x)$ is well-defined and
	$$\gamma'(x) \cdot N(x) \leq 0.$$
\end{lem}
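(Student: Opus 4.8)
The plan is to read this off directly from the monotonicity of the radial coordinate along $\gamma$ recorded in property (7); the statement is essentially a differentiated version of that property. First I would dispose of the well-definedness claim: by property (4) we have $\gamma_2(x) > 0$ for every $x \in (0,\beta)$, so $\gamma(x) \neq (0,0)$ on that interval, and hence $N(x) = \gamma(x)/|\gamma(x)|$ is defined there.

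Next I set $\rho(x) = |\gamma(x)|$ for $x \in [0,\beta)$. Property (7) tells us $\rho$ is non-increasing on $[0,\beta)$. Since $\gamma$ is smooth on $(-\beta,\beta)$ by property (5), and since we have just seen that $\gamma$ does not vanish on $(0,\beta)$, the map $y \mapsto |y|$ being smooth away from the origin implies that $\rho$ is smooth on $(0,\beta)$. Differentiating gives
$$\rho'(x) = \frac{\gamma(x)\cdot\gamma'(x)}{|\gamma(x)|} = \gamma'(x)\cdot N(x).$$
Since $\rho$ is differentiable at each $x \in (0,\beta)$ and non-increasing on a neighbourhood of $x$, its derivative there is $\le 0$; that is, $\gamma'(x)\cdot N(x) = \rho'(x) \le 0$, which is the assertion.

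I do not expect a genuine obstacle in this argument: all of the geometric content — that the radial distance to the origin cannot increase as one moves along $\gamma$ away from the apex $x^*$ — is already encapsulated in the spherical symmetry of $\mathcal{K}$ (equivalently $C$) and stated as property (7). The only point that needs a moment's care is verifying that $\rho$ is smooth enough to differentiate, and this is immediate from $\gamma_2 > 0$ on $(0,\beta)$, which keeps $\gamma$ away from the origin where $|\cdot|$ fails to be differentiable.
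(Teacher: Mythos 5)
Your proof is correct and follows exactly the paper's own (very terse) argument: the paper derives the lemma by noting that the spherical symmetry of $\mathcal{K}$ forces $|\gamma|$ to be non-increasing on $[0,\beta)$ and then differentiating $|\gamma|$. You have simply supplied the routine details (non-vanishing of $\gamma$ on $(0,\beta)$ via $\gamma_2>0$, smoothness of $|\gamma|$ away from the origin, and the computation $\tfrac{d}{dx}|\gamma(x)| = \gamma'(x)\cdot N(x)$), so there is nothing to add.
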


If a point $y$ exists that satisfied the conclusion of Lemma \ref*{lem:second_tangent}, then by Lemma \ref*{lem:tangent_restriction},
	$$\gamma'(y) \cdot N(y) = \frac{\gamma_2(y)}{| \gamma(y) |} \leq 0.$$
Since $\gamma_2(y) > 0$ for each $y \in (0, \beta)$, we thus have a contradiction.  This means that $\gamma$ must be a centered circle of
radius $x^*$.  Thus, $C$ is contained in the closed disc of radius $x^*$, which is equal to $\mathcal{K}$.  Since we have already established that $\mathcal{K} \subset C$, this means
that $C$ is equal to the closed disc of radius $x^*$, and so $A = B_M$, as desired.  This completes the proof of Proposition \ref*{prop:centered_ball}.

With this tool in hand, we can now prove Theorems \ref*{thm:main} and \ref*{thm:uniqueness}.

\begin{proof}[Proof of Theorem \ref*{thm:main} and Theorem \ref*{thm:uniqueness}]
By the logic at the start of this section, this theorem is true if $f$ is constant.
Hence, assume that it is not constant everywhere.  Let $M' = \Vol(B_{\mathcal{R}(f)})$, and fix $M > M' \geq 0$.  By Theorem \ref*{thm:regularity}, there
exists at least one isoperimetric set of weighted volume $M$.
Choose any such minimizer $A$ of weighted volume $M$, and apply the process of \emph{spherical symmetrization} to it. A good reference for this process and its properties is
Section 9.2 of \cite{burago}.
To produce $A^\star$, the result of this process, we consider $S_r$,
the centered $n-1$-dimensional sphere of radius $r$.  We then define
$A^\star \cap S_r$ to be a closed spherical cap
centered on the non-negative $e_1$ axis
such that
$$\mathcal{H}^{n-1}(A^\star \cap S_r) = \mathcal{H}^{n-1}(A \cap S_r).$$
If this measure is $0$, then $A^\star \cap S_r$ is empty.

We then have that, by Theorem 6.2 from \cite{morg_iso_2},
$$\Vol(A^\star) = \Vol(A)$$
and
$$\Per(A^\star) \leq \Per(A).$$
Since $A$ is an isoperimetric region, $A^\star$ is one as well, and so is a
spherically symmetric isoperimetric region.  As $A$ has the distributed volume condition,
$A^\star$ does as well.  Then, by Proposition \ref*{prop:centered_ball},
	$$ \partial A^\star = \partial B_M.$$
Looking again at the definition of $A^\star$, we see that this implies that
	$$ \partial A = \partial B_M,$$
which means that $A$ is a centered ball up to a set of measure $0$.
In summary, if $M > M'$, at least one isoperimetric set $A$ with $\Vol(A) = M$ exists, and for every such
minimizer, it must be a centered ball up to a set of measure $0$.

If $M' = 0$, this completes the proof.  If $M' > 0$, then fix $0 < M \leq M'$. Again, by Theorem \ref*{thm:regularity}, there is at least one isoperimetric set with weighted
volume $M$.  For any such minimizer $A$, if $A$ has the distributed
volume condition then we may apply the above logic to conclude that $A$ is a centered ball, up to a set of measure $0$.  However,
since $A$ has the distributed volume condition, the radius of this ball must be more than $\mathcal{R}(f)$, and so $\Vol(A) > M' \geq M$,
which is a contradiction.
Hence, $A$ lies inside $B_{\mathcal{R}(f)}$, up to a set of measure $0$.  Let us now consider the problem of finding 
isoperimetric regions of weighted volume $M > 0$ in $\RR^n$ with the constant density $h(x) = f(0)$.  We know that the set of such minimizers
is exactly the set of all balls of weighted volume $M$, up to sets of measure $0$.  Thus, $A$ is 
an isoperimetric region if and only if it is a ball of weighted volume $M$ located entirely inside $B_\mathcal{R}(f)$, again up to a set of measure $0$.
\end{proof}

The remainder of this article is divided up into two sections.  The first contains the proof to the First Tangent Lemma,
and the second contains the proof to the Second Tangent Lemma.

%%%%%%%%%%%%%%%%%%%%%%%%%%%%%%%%%%%%%%%%%%%%%%%%%%%
\section{Proof of First Tangent Lemma}
%%%%%%%%%%%%%%%%%%%%%%%%%%%%%%%%%%%%%%%%%%%%%%%%%%%

This section is devoted to proving the First Tangent Lemma (Lemma \ref*{lem:first_tangent}).  We will prove this by contradiction: we will assume that
$\gamma$ is not a centered circle, and then produce a point $p \in (0, \beta)$ with $\gamma'(p) = (0,-1)$ and $\kappa(p) > 0$.
  To go about doing this, we are going to consider two components of the curve $\gamma$, the upper curve, and the lower curve.  Although the definitions
of these two curves are slightly technical, the upper curve will turn out to be the entire first segment of $\gamma$ on 
$[0, \beta)$ such that $\gamma'$ lies in the second
quadrant, and the lower curve will be the entire following segment of $\gamma$ such that $\gamma'$ lies in the third quadrant.  Except for a special case, which
is handled differently, we will prove additional properties of these two curves which allow us to make the following important conclusion.
If we compare a point $x$ on the upper curve and a point $y$ on the lower curve such that
	$$ \gamma_2(x) = \gamma_2(y),$$
then
	$$\kappa(x) \leq \kappa(y).$$
Additionally, $\kappa(x) < \kappa(y)$ for a significant portion of pairs $(x,y)$.  This will allow us to conclude that the lower curve curves faster than the
upper curve, and so must terminate before it reaches the $e_1$-axis.  We will show that the derivative at this point of termination is
$(-1,0)$, as desired.  This is shown in Figure \ref*{fig:upper_lower_curve}.  For the rest of this section, it will be useful to refer back to this figure.

\begin{figure}[ht]
  \caption{The Upper and lower curves of $\gamma$.  Note that $\kappa(x) \leq \kappa(y)$.}
  \centering
    \includegraphics[width=1.00\textwidth]{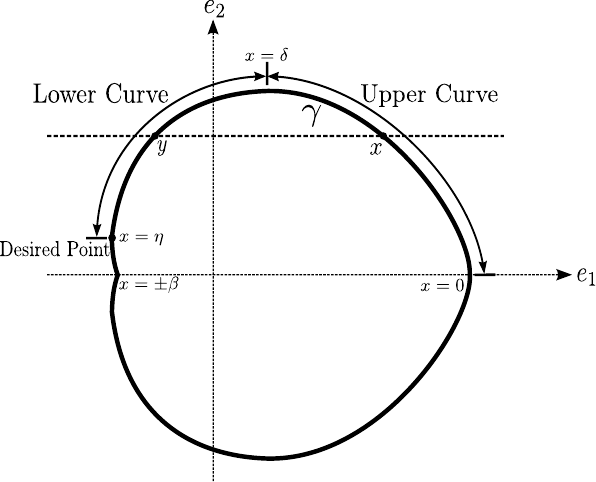}
  \label{fig:upper_lower_curve}
\end{figure}

We will start with several computational results will will be useful to us later.  We will then define the upper curve rigorously, and prove
that it has the desired structure.  Lastly, we will define the lower curve and complete the proof of Lemma \ref*{lem:first_tangent}.

%%%%%%%%%%%%%%%%%%%%%%%%%%%%%%%%%%%%%%%%%%%%%%%%%
\subsection{Preliminary Lemmas and Definitions}
%%%%%%%%%%%%%%%%%%%%%%%%%%%%%%%%%%%%%%%%%%%%%%%%%

We first seek to produce a more manageable expression for the unaveraged
mean curvature $H_0$.  To do this, we define the \emph{canonical circle}:

\begin{defn}[Canonical Circle]
	\label{defn:canonical_circle}
	Given a point $x \in (0,\beta)$, let $C_x$ be the unique oriented circle which
	goes through $\gamma(x)$, whose center lies on the $e_1$ axis, and whose unit tangent
	vector at $\gamma(x)$ is $\gamma'(x)$.  We call this the canonical
	circle at the point $x$, and denote it by $C_x$.  If $\gamma'(x) = (0,1)$ or $\gamma'(x) = (0,-1)$, then
	the canonical circle is an oriented vertical line.    Note that
	this is well-defined since no points in $(0, \beta)$ lie on the $e_1$ axis.  If $x = 0$, then
	we define $C_x$ as $\lim_{x \rightarrow 0^+} C_x$, which exists because $\gamma$ is regular at $0$.
	Lastly, let $\kappa(C_x)$ denote the signed curvature of $C_x$.  If $C_x$ has radius $r$ (possibly $\infty$),
	then $\kappa(C_x) = \frac{1}{r}$ if $C_x$ is counterclockwise oriented, and
	$\kappa(C_x) = - \frac{1}{r}$ if it is clockwise oriented.  From this definition, we see that
	$\kappa(C_0) = \kappa(0)$, the tangent of $C_0$ at $0$ is equal to $\gamma'(0)$, and both the center of
	$C_x$ and the curvature of $C_x$ are continuous functions of $x$ on $[0, \beta)$.

	If the canonical circle at a certain point is an oriented vertical line, then we call the point where it intersects
	the $e_1$ axis its center.
\end{defn}

We can now characterize the unaveraged mean curvature $H_0$ in terms of the inward curvature of $\gamma$
and the signed curvature of the canonical circle.  Note that $\kappa(C_x)$ is a smooth function of $x$ on $(-\beta, \beta)$.

\begin{prop}
	\label{prop:mean_curvature}
	Given a point $x \in [0, \beta)$, we have that
	$$H_0(x) = \kappa(x) + (n-2) \kappa(C_x).$$
\end{prop}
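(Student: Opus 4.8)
The plan is to compute the mean curvature of the hypersurface $\partial A$ at a point $p$ corresponding to $\gamma(x)$ by decomposing it into principal curvatures. Since $\partial A$ is obtained by rotating $\partial C$ (and hence $\gamma$) about the $e_1$ axis, at a regular point $\gamma(x)$ with $x \in (0,\beta)$ the hypersurface is a hypersurface of revolution, and its principal curvatures split into two families: one principal curvature in the ``profile'' direction (tangent to $\gamma$ itself), and $n-2$ principal curvatures in the directions tangent to the latitude sphere $S_{|\gamma(x)|} \cap \{\text{rotation orbit}\}$ through $p$.

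First I would set up coordinates so that $p = \gamma(x)$ lies in the $e_1 e_2$-plane, with the $e_1$ axis the axis of revolution. The profile-direction principal curvature is exactly the inward curvature of the plane curve $\gamma$ at $\gamma(x)$, namely $\kappa(x)$; this is immediate from the definition of a surface of revolution, since the generating curve sits in a plane orthogonal to all the rotational directions. Next I would identify the remaining $n-2$ principal curvatures. These come from the curvature of the orbit circle (the circle of revolution through $p$, which has radius $\gamma_2(x) = |\gamma(x)|\,|\sin\theta|$ in the appropriate sense — more precisely the distance from $p$ to the $e_1$ axis) together with how the normal $\nu$ tilts. The standard computation for a surface of revolution gives each of these curvatures as $\dfrac{-\gamma'_1(x)}{\gamma_2(x)}$ up to sign conventions (the "second principal curvature" of a surface of revolution); the content of the proposition is that this quantity equals $\kappa(C_x)$, the signed curvature of the canonical circle. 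This is where the canonical circle earns its keep: by construction $C_x$ passes through $\gamma(x)$, is tangent to $\gamma'(x)$ there, and has its center on the $e_1$ axis, so $C_x$ is precisely the circle whose revolution about the $e_1$ axis is the "osculating sphere-of-revolution" matching $\partial A$ to second order in the non-profile directions. Hence the latitude-direction principal curvature of $\partial A$ equals that of the revolved $C_x$, which by an elementary calculation (a sphere of radius $r$ has all principal curvatures $1/r$) is exactly $\kappa(C_x)$, with the orientation matching because both $\gamma$ and $C_x$ carry the same tangent and we use the same inward/outward normal convention relative to $\mathcal{K}$.

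Summing: $H_0(x) = \kappa(x) + (n-2)\kappa(C_x)$ for $x \in (0,\beta)$. The endpoint case $x = 0$ follows by continuity: by Definition \ref*{defn:canonical_circle}, $C_0 = \lim_{x\to 0^+} C_x$ exists with $\kappa(C_0) = \kappa(0)$ because $\gamma$ is regular at $0$ (indeed at $x^*$ the tangent is vertical, so $C_0$ is the vertical line and $\kappa(C_0) = 0 = \kappa(C_x)$ only in the degenerate reading — more carefully, one takes the limit of the formula, using that both $\kappa(x)$ and $\kappa(C_x)$ extend continuously to $[0,\beta)$ and that $H_0$ does as well since $\partial A$ is regular at the corresponding point).

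The main obstacle I anticipate is bookkeeping of signs and orientations: reconciling the "inward unaveraged mean curvature" convention for $H_0$ on $\partial A$, the "inward curvature" convention for $\kappa$ on $\gamma$ relative to $\mathcal{K}$, and the "signed curvature" of $C_x$ (positive for counterclockwise, negative for clockwise) so that they genuinely add rather than subtract. The cleanest route is to verify the formula on two test cases where everything is transparent — a centered sphere of radius $r$ (where $\gamma$ is the semicircle, $C_x$ is the same semicircle, and one should get $H_0 = (n-1)/r = \kappa(x) + (n-2)\kappa(C_x)$ with $\kappa(x) = \kappa(C_x) = 1/r$), and a right circular cylinder about the $e_1$ axis (where $\gamma$ is a horizontal segment so $\kappa(x) = 0$, $C_x$ is a vertical line through that height... wait, no: for the cylinder $\gamma'(x) = (\pm 1, 0)$ is horizontal, so $C_x$ is the centered circle of radius $\gamma_2(x)$, giving $\kappa(C_x) = 1/\gamma_2(x)$ and $H_0 = (n-2)/\gamma_2(x)$, the correct mean curvature of a cylinder) — and then to fix all sign conventions so both cases come out right. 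Once the conventions are pinned down, the general computation is the routine surface-of-revolution calculation and the identification of its second principal curvature with $\kappa(C_x)$.
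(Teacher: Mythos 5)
Your proposal is correct, and it arrives at exactly the same two-term decomposition as the paper, but by a different derivation. The paper writes $\partial A$ locally as the zero set of $Q(x_1,\dots,x_n)=p(x_1)-\sqrt{\sum_{i\ge 2}x_i^2}$ and computes $\operatorname{div}(\nabla Q/|\nabla Q|)$ explicitly, obtaining the meridian term $p''/(1+(p')^2)^{3/2}=\kappa(x)$ plus $(n-2)$ copies of $1/(p\sqrt{1+(p')^2})$, and then identifies that second quantity with $\kappa(C_x)$ via the elementary geometry of the canonical circle (deferred to the appendix, Lemma \ref*{lem:inside_ball_details}). You instead invoke the $O(n-1)$-symmetry to diagonalize the shape operator into one meridian principal curvature (which is $\kappa(x)$) and $n-2$ equal orbit-direction principal curvatures, and you identify the latter with $\kappa(C_x)$ by comparing $\partial A$ with the sphere obtained by revolving $C_x$ (which is tangent to $\partial A$ at the point, shares the same orbit circle and normal, and has all principal curvatures $1/r'$). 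This is a legitimate and arguably more geometric route; your Meusnier-type comparison with the revolved canonical circle does the same job as the paper's appendix computation. Two small remarks: your calibration via the sphere and cylinder test cases is the right way to settle the sign conventions, which is indeed the only delicate point; and your handling of $x=0$ by continuity matches the paper's treatment of the degenerate cases (the paper uses the same continuity argument when $\gamma'(x)=(0,\pm 1)$, where its graph representation breaks down — a case your principal-curvature argument in fact covers directly, since there the surface normal is orthogonal to the orbit circle's principal normal and the latitude curvature is $0=\kappa(C_x)$).
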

\begin{proof}
	Let us first assume that $\gamma_2(x) > 0$, and that $\gamma'(x) \neq (0, \pm 1)$.
	We can then write $\gamma$ locally around $x$ as a smooth positive function of some portion
	of the $e_1$ axis.  Let this smooth function be $p$, so that $(y_0,p(y_0)) = \gamma(x)$ and $(y, p(y))$
	is equal to $\gamma$ on some local neighborhood of $\gamma(x)$.  Now, we know that every $n$-tuple $(x_1, ..., x_n)$ 
	on the boundary of our spherically symmetric minimizer $A$ in $\RR^n$ that corresponds to a point $(y, p(y))$
	satisfies the following equality:
		$$p(x_1) - \sqrt{\sum_{i=2}^n x_i^2} = 0.$$
	Let $Q(x_1, ..., x_n) = p(x_1) - \sqrt{\sum_{i=2}^n x_i^2}$.  We first compute
		$$div \frac{ \nabla Q}{| \nabla Q |}(x_1, \dots, x_n).$$
	We have that
		$$ \frac{\nabla Q}{| \nabla Q |}(x_1, \dots, x_n) = \frac{1}{\sqrt{1 + (p'(x_1))^2}} (p'(x_1), -\frac{x_2}{\sqrt{\sum_{i=2}^n x_i^2}}, \dots, - \frac{x_n}{\sqrt{\sum_{i=2}^n x_i^2}}).$$
	Taking the divergence of this, we obtain that
	\begin{align*}
		div \frac{\nabla Q}{| \nabla Q |}(x_1, \dots, x_n) &= \frac{p''(x_1)}{\sqrt{1 + (p'(x_1))^2}} - \frac{(p'(x_1))^2 p''(x_1)}{(1 + (p'(x_1))^2)^\frac{3}{2}} \\
						   &  -\frac{1}{\sqrt{1 + (p'(x_1))^2}} \sum_{i = 2}^n (\frac{1}{\sqrt{\sum_{j=2}^n x_j^2}} - \frac{x_i^2}{(\sum_{j=2}^n x_j^2)^{\frac{3}{2}}}) \\
						   &= \frac{p''(x_1)}{(1 + (p'(x_1))^2)^{\frac{3}{2}}} - \\
						   &  (n-2) \frac{1}{\sqrt{(1 + (p'(x_1))^2) (\sum_{i=2}^n x_i^2)}}
	\end{align*}
	Since $p(x_1) = \sqrt{\sum_{i=2}^n x_i^2}$, this becomes
		$$ \frac{p''(x_1)}{(1 + (p'(x_1))^2)^{\frac{3}{2}}} - (n-2) \frac{1}{p(x_1) \sqrt{1 + (p'(x_1))^2}}.$$
	
	Recall that $\mathcal{K}$ is the bounded region enclosed by $\gamma$.  We will break the proof into two cases, depending on
	whether the inward unit normal vector at $x$ with respect to $\mathcal{K}$ is upwards or downwards.  If $n$ is this unit normal vector,
	then we say that it is upwards if $n \cdot (0,1) > 0$, and we say that it is downwards if $n \cdot (0,-1) > 0$.  Since $\gamma' \neq (0, \pm 1)$, these are the only two possibilities.
	 	
	If $n$ is downwards, then from the list of formulae at the end of
	\cite{morg_rie_geom}, we have that the inward mean curvature of $\partial A$
	at $(x_1, \dots, x_n)$ is equal to
		$$- div \frac{ \nabla Q}{| \nabla Q |}(x_1, \dots, x_n) = - \frac{p''(x_1)}{(1 + (p'(x_1))^2)^{\frac{3}{2}}} + (n-2) \frac{1}{p(x_1) \sqrt{1 + (p'(x_1))^2}}.$$
	Since $n$ is downwards, the first term is $\kappa(x)$, and the second term is $(n-2) \kappa(C_x)$, as desired.  In particular, we can show that the second
	term is $(n-2) \kappa(C_x)$ by using a method identical to that employed in Appendix 1, Lemma \ref*{lem:inside_ball_details}.

	Similarly, if $n$ is upwards, then we have that the inward mean curvature of $\partial A$ at $(x_1, \dots, x_n)$ is equal to
		$$ div \frac{\nabla Q}{| \nabla Q |}(x_1, \dots, x_n) = \frac{p''(x_1)}{(1 + (p'(x_1))^2)^{\frac{3}{2}}} - (n-2) \frac{1}{p(x_1) \sqrt{1 + (p'(x_1))^2}}.$$
	In this case, since $n$ is upwards, we have that the first term again corresponds to $\kappa(x)$, and the second term again corresponds to
	$(n-2) \kappa(C_x)$, as desired.  Again, we can show that the second term is $(n-2) \kappa(C_x)$ by using a method identical to that employed in
	Appendix 1, Lemma \ref*{lem:inside_ball_details}.
	
	If $\gamma'(x) = (0, \pm 1)$, then the fact that $x \in [0, \beta)$ implies that
	$\gamma$ is regular at $x$, and so $H_0$, $\kappa$, and $\kappa(C_x)$ are all smooth at $x$.
	Combined with the above result, this smoothness implies that
	$$H_0(x) = \kappa(x) + (n-2) \kappa(C_x)$$
	for all $x$ in $[0, \beta)$.

\end{proof}

We will now prove that $\gamma$ has certain properties which will begin to narrow down
its behavior.  We first state a result that allows us to conclude that $\gamma$
is a circle from local data.
\begin{lem}
	\label{lem:C_x_equality}
	For any point $x \in [0, \beta)$, if we have that the center of $C_x$ is the origin and
	$\kappa(x) = \kappa(C_x)$, then we have that $\gamma$ is a centered circle.
\end{lem}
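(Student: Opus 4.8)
**Proof proposal for Lemma \ref*{lem:C_x_equality}.**

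The plan is to show that under the hypotheses, the curve $\gamma$ and the centered circle of radius $|\gamma(x)|$ satisfy the same second-order ODE at $x$ with the same initial data, and then invoke uniqueness of solutions. First I would observe that at the point $x$ in question we have three pieces of data about $\gamma$: its position $\gamma(x)$, its unit tangent $\gamma'(x)$, and its inward curvature $\kappa(x)$. By Definition \ref*{defn:canonical_circle}, the canonical circle $C_x$ is the centered-on-$e_1$-axis circle through $\gamma(x)$ tangent to $\gamma'(x)$ there; the hypothesis that the center of $C_x$ is the origin forces $C_x$ to be the centered circle of radius $r := |\gamma(x)|$, and in particular $\gamma'(x)$ is perpendicular to $\gamma(x)$ (tangent to that circle). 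The hypothesis $\kappa(x) = \kappa(C_x) = \pm 1/r$ then says $\gamma$ and $C_x$ agree to second order at $x$.

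Next I would use the curvature equation satisfied by $\gamma$. Since $\gamma$ traces a component of $\partial C$ of an isoperimetric region, by Theorem \ref*{thm:regularity}(3) and Proposition \ref*{prop:mean_curvature} we have $c = H_f(x) = H_0(x) + H_1(x) = \kappa(x) + (n-2)\kappa(C_x) + g'(|\gamma(x)|)\, N(x)\cdot n(x)$ at every point of $(-\beta,\beta)$, and this relation persists at $x=0$ by regularity. Solving for $\kappa(x)$ expresses $\kappa$ as an explicit smooth function of $(\gamma(x), \gamma'(x))$ — note $\kappa(C_x)$ is itself determined by position and tangent direction, and the normals $N, n$ likewise. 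Thus $\gamma$, parametrized by arc length, satisfies an autonomous second-order system $\gamma'' = F(\gamma, \gamma')$ with $F$ smooth (here $C^1$ suffices) wherever $\gamma \neq 0$, and in particular on a neighborhood of $x$ in $[0,\beta)$. The centered circle $\sigma(s)$ of radius $r$, suitably oriented and parametrized by arc length, also solves this system: for the centered circle the identity $c = H_f$ reduces to the same algebraic relation, since for it $\kappa_\sigma = \kappa(C_\sigma)$ and the geometry is consistent. Since $\gamma$ and $\sigma$ share the same $(\text{position}, \text{tangent})$ initial condition at $x$ and the same curvature $\kappa(x) = \kappa_\sigma$, they have the same full $1$-jet at $x$, so by the Picard–Lindelöf uniqueness theorem they coincide on a neighborhood of $x$. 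Finally, a standard continuation argument — the set of parameters where $\gamma$ agrees with the centered circle of radius $r$ is nonempty, open (by the local uniqueness just shown, applied at any agreement point), and closed (by continuity) in the connected parameter interval on which $\gamma$ stays away from the origin, which is all of $(-\beta,\beta)$ since $|\gamma| \le |x^*|$ and, more importantly, $|\gamma(0)| > \mathcal{R}(f) > 0$ forces $|\gamma| \ge r > 0$ for the circle comparison to make sense — lets us conclude $\gamma$ is the centered circle globally. I would tidy the endpoint behavior at $\pm\beta$ using property (8) and Lemma \ref*{lem:gamma_behavior_beta}.

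The main obstacle is verifying cleanly that the centered circle of radius $r$ genuinely solves the same ODE, i.e. that it is a \emph{stationary} curve for the weighted problem with the \emph{same} constant $c$; one must check that $H_f$ is constant along a centered circle and that its value matches $c$, which follows because $\gamma$ already has $H_f \equiv c$ and shares the defining data at $x$ — but making the bookkeeping of orientations and signs of $\kappa$, $\kappa(C_x)$, $N\cdot n$ consistent at the contact point requires care. A secondary subtlety is ensuring $\gamma$ never hits the origin so that $F$ is smooth along all of $(-\beta,\beta)$; this is where one uses that $|\gamma|$ is non-increasing on $[0,\beta)$ together with $|\gamma(0)| > \mathcal{R}(f) \ge 0$, so actually it suffices to run the continuation only as far as $\gamma$ stays positive, and then observe the resulting centered circle of radius $r = |x^*| > 0$ has $\gamma$ tracing all of it, forcing $\beta$ to be exactly a half-period and giving the full statement.
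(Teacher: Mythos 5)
Your proposal is correct and follows essentially the same route as the paper: both $\gamma$ and the centered circle satisfy the ODE $H_f = c$ (with the same constant $c$, since they share position, tangent, and curvature at $x$, giving $c = g'(|\gamma(x)|) + (n-1)\kappa(C_x)$), so ODE uniqueness for arclength parametrizations forces local agreement, and a continuation argument gives global agreement. The extra bookkeeping you supply (the explicit second-order system, the open-closed argument, and the check that $\gamma$ avoids the origin) is exactly the content the paper leaves implicit.
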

\begin{proof}
	Since $H_f = c$	for a constant $c$ at all regular points, if $C_x$ is centered on the origin for some $x$ and $\kappa(x) = \kappa(C_x)$,
	we have that $c = g'(x) + (n-1) \kappa( C_x )$.  The two curves $C_x$ and $\gamma$ satisfy the ordinary
	differential equation $H_f = c$, agree at the point $x$, and their tangent vectors
	agree at the point $x$.  By standard theorems concerning the uniqueness of solutions from the theory of ODES, combined with the fact
	that $\gamma$ and $C_x$ are both arclength parametrizations,
	we have that these solutions must locally agree, so $\gamma = C_x$ around $x$.
	By successively applying this local result, we
	have that that $\gamma = C_x$ everywhere.	
\end{proof}

We next characterize the possible centers of $C_x$, for every $x \in [0, \beta)$.
\begin{lem}
	\label{lem:positive_e_1}
	For $x \in [0, \beta)$, if $\gamma'(x)$ is in the second quadrant, and if $a$ is the $e_1$-coordinate of
	the center of $C_x$, then $a \geq 0$.
\end{lem}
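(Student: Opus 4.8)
The plan is to compute the $e_1$-coordinate $a$ of the center of $C_x$ explicitly from the elementary fact that, for a genuine circle, the radius at a point is perpendicular to the tangent there, and then to recognize the resulting quantity as a multiple of $\frac{d}{dx}|\gamma(x)|^2$, whose sign is already controlled by the Tangent Restriction Theorem (equivalently, by property $7$ of $\gamma$).

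First I would treat $x\in(0,\beta)$. Since $\gamma'(x)$ lies in the second quadrant, either $\gamma'(x)=(0,1)$ or $\gamma_1'(x)<0$. The first possibility cannot occur on $(0,\beta)$: it would give $\frac12\frac{d}{dx}|\gamma(x)|^2=\gamma_1(x)\gamma_1'(x)+\gamma_2(x)\gamma_2'(x)=\gamma_2(x)>0$, contradicting Lemma~\ref{lem:tangent_restriction}, which says that $\gamma'(x)\cdot N(x)=\frac{1}{|\gamma(x)|}\bigl(\gamma_1(x)\gamma_1'(x)+\gamma_2(x)\gamma_2'(x)\bigr)\le 0$. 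Hence $\gamma_1'(x)<0$, and in particular $C_x$ is a genuine circle rather than a vertical line. Writing its center as $(a,0)$, the vector $\gamma(x)-(a,0)$ is a radius of $C_x$ and is therefore orthogonal to $\gamma'(x)$; expanding $\langle \gamma'(x),\,\gamma(x)-(a,0)\rangle=0$ gives
\[
a\,\gamma_1'(x)=\gamma_1(x)\gamma_1'(x)+\gamma_2(x)\gamma_2'(x)=\tfrac12\tfrac{d}{dx}|\gamma(x)|^2\le 0,
\]
the last inequality again by Lemma~\ref{lem:tangent_restriction}. Dividing by $\gamma_1'(x)<0$ yields $a\ge 0$.

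It remains to handle $x=0$, where $\gamma'(0)=(0,1)$ and $C_0$ is defined only as $\lim_{x\to 0^+}C_x$; this is the one place requiring a little care, since there the perpendicularity computation degenerates ($\gamma_1'(0)=0$). I would argue that for all sufficiently small $x>0$ the tangent $\gamma'(x)$ already lies in the open second quadrant: $\gamma_2'(x)\to\gamma_2'(0)=1>0$ and $\gamma_1(x)\to|x^*|>0$, so for small $x$ we have $\gamma_1(x),\gamma_2(x),\gamma_2'(x)>0$, whence $\gamma_1(x)\gamma_1'(x)\le -\gamma_2(x)\gamma_2'(x)<0$ forces $\gamma_1'(x)<0$. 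By the previous paragraph the center of $C_x$ then has nonnegative $e_1$-coordinate for all such $x$, and since the center of $C_x$ is a continuous function of $x$ on $[0,\beta)$ (Definition~\ref{defn:canonical_circle}), the same holds in the limit for the center of $C_0$. The only genuine obstacle is thus this degenerate endpoint; the rest is a one-line computation combined with the monotonicity of $|\gamma|$ recorded in property $7$ and restated as Lemma~\ref{lem:tangent_restriction}.
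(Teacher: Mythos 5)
Your argument for $x\in(0,\beta)$ is correct and is essentially the computation the paper performs: the paper writes $\gamma'(x)$ as the normalized $90^\circ$ rotation of the radius vector $\gamma(x)-(a,0)$ and pairs it with $N(x)$, which after cancellation gives $-a\,\gamma_2(x)\le 0$; your orthogonality identity $a\,\gamma_1'(x)=\gamma(x)\cdot\gamma'(x)\le 0$ is the same fact packaged slightly differently, and your preliminary exclusion of $\gamma'(x)=(0,1)$ on $(0,\beta)$ via the Tangent Restriction Theorem is a sound way to guarantee that $C_x$ is a genuine circle before dividing by $\gamma_1'(x)$. Where you genuinely diverge is at $x=0$. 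The paper argues there by a local approximation: since $\kappa(C_0)=\kappa(0)$, the tangents agree, and $\kappa'(0)=0$ by symmetry, $C_0$ osculates $\gamma$ to fourth order, so a center with $a<0$ would push nearby points of $\gamma$ outside the centered circle of radius $|\gamma(0)|$, contradicting the maximality of $|\gamma(0)|$. You instead observe that $\gamma_1'(x)<0$ for all small $x>0$ (forced by the Tangent Restriction Theorem together with $\gamma_1(0)=|x^*|>0$ and $\gamma_2'(0)=1$), apply the interior computation there, and pass to the limit using the continuity of the center of $C_x$ on $[0,\beta)$, which the paper stipulates in Definition~\ref{defn:canonical_circle} (indeed $C_0$ is defined as $\lim_{x\to 0^+}C_x$, so its center is by definition that limit of centers). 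Your route is simpler and avoids the fourth-order osculation claim entirely; its only cost is that it leans on the existence and continuity of that limit, which the paper has already asserted, so the argument is complete.
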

\begin{proof}
		First consider the case when $x \neq 0$.  We then have that
	$\gamma_2(x) > 0$.  As a result of Lemma \ref*{lem:tangent_restriction},
		$$N(x) \cdot \gamma'(x) \leq 0.$$
	From the definition of $C_x$ along with the fact
	that $\gamma'(x)$ is in the second quadrant, we have that
		$$ \gamma'(x) = \frac{(-\gamma_2(x), \gamma_1(x) - a)}{| (-\gamma_2(x), \gamma_1(x) - a) |}.$$
	Combining this with $N(x) = \frac{\gamma(x)}{| \gamma(x) |}$, we have that
		$$(\gamma_1(x), \gamma_2(x)) \cdot (-\gamma_2(x), \gamma_1(x) - a) \leq 0,$$
	so we have that $-a \gamma_2(x) \leq 0$.  Since $\gamma_2(x) > 0$, $a \geq 0$.

	If $x = 0$, then since $\kappa(C_0) = \kappa(0)$, the tangent of $C_0$ at $0$ agrees with
	$\gamma'(0)$, and $\kappa'(0) = 0$ (since $\gamma$ is symmetric about $0$), $C_0$ approximates $\gamma$ near $0$ up to the fourth order.  Hence, if $a < 0$,
	then there are points in $[0, \beta)$ near $0$ on $\gamma$ that lie outside the
	centered circle of radius $\gamma(0)$.  This is a contradiction, since all points on $\gamma$ must be less than
	or equal to $|\gamma(0)|$ in magnitude.  Hence, $a \geq 0$ in this case as well.
\end{proof}

To improve readability, we shall define $\lambda(x) = \kappa(C_x)$ for all
$x \in (-\beta, \beta)$.  As already mentioned, $\lambda$ is a smooth function of $x$ on $(-\beta, \beta)$.

We now prove several results, the proofs of which all work the same way.  Given a point $x \in [0, \beta)$,
we consider the unique oriented circle $A_x$ that is tangent to $\gamma$ at $x$, and whose signed curvature is equal to
$\kappa(x)$.  From the definition of curvature, $A_x$ approximates $\gamma$ locally up to the third order
on $(0, \beta)$.  Since $\gamma(x) = \gamma(-x)$ on $[0, \beta)$, we have that $\kappa'(0) = 0$, and so
$A_0$ approximates $\gamma$ up to fourth order locally near $0$.  Note that $A_x$ can be an oriented line.

This will help us prove the following properties of $\gamma$ because we will show that the desired quantities
of $\gamma$ at $x$ are unchanged if we replace $\gamma$ with $A_x$.  In particular, let $\alpha$ be a unit-speed
parametrization of $A_x$, with $\alpha(\widetilde{x}) = \gamma(x)$.  We can define quantities for $\alpha$ in the same way
that we do for $\gamma$.  We shall denote the analogous quantity with a tilde, as follows:
\begin{enumerate}
	\item	$\widetilde{\kappa}(y)$ refers to the signed curvature of $\alpha$ at $y$.
	\item	$\widetilde{H_1}(y)$ refers to $\frac{\partial g}{ \partial \nu}$ at $y$, where $\nu$ is the unit
		outward normal to $\alpha$ at $y$.
\end{enumerate}
Both of these quantities are smooth on the entirety of $\alpha$.

We also define the canonical circle $\widetilde{C}_y$ for points $y$ on $A_x$.  If $\alpha_2(y) \neq 0$, then $\widetilde{C}_y$ is defined as the canonical
circle at $y$ on $A_x$ using the same definition as before.
If $\alpha_2(y) = 0$ and $\alpha'(y) = (0, \pm 1)$, then $\widetilde{C}_y$ is defined to be $A_x$.  If $\alpha_2(y) = 0$ and $\alpha'(y) \neq (0, \pm 1)$, then 
$\widetilde{C}_y$ is not defined.  At each point $y$ where $\widetilde{C}_y$ is defined, the canonical circle is defined on a neighborhood of $y$.
	Additionally, we define $\widetilde{\lambda}(y)$ to be the signed curvature of $\widetilde{C}_y$ (defined at each point $y$ where
$\widetilde{C}_y$ exists).  Since $\widetilde{C}_y$ is defined on an open set, $\widetilde{\lambda}(y)$ is also defined on an open set.  Furthermore, $\widetilde{\lambda}(y)$ is smooth 
on this domain of definition.

We produce one more definition before we continue.

\begin{defn}
\label{defn:function_F}
For each $x \in (-\beta, \beta)$, let $F(x)$ be the $e_1$-coordinate of the center of the canonical circle at $x$.  Additionally, if $A_x$ is the circle as defined above, and $y$ is a point
on $A_x$ where $\widetilde{C}_y$ is defined, then let $\widetilde{F}(y)$ be the $e_1$ coordinate of the circle $\widetilde{C}_y$.
\end{defn}

Clearly, $F$ is smooth at all points $x \in (-\beta, \beta)$ with $\gamma'(x) \neq (0, \pm 1)$.  Additionally, $F$ is smooth at $0$.
$\widetilde{F}$ is smooth on an open set containing every point $y$ with the properties that $\widetilde{C}_y$ is defined at $y$ and $\alpha'(y) \neq (0, \pm 1)$.

For each of the proofs, we will summarize how this argument goes, using the above notation.

\begin{lem}
	\label{lem:curvature_bound}
	Given a point $x \in [0, \beta)$, $\kappa'(x) \geq 0$ if the following properties hold:
	\begin{enumerate}
		\item $\gamma'(x)$ is in the second quadrant
		\item $\kappa(x) = \kappa(C_x) > 0$
	\end{enumerate}
	If in addition, $\gamma(x) \not \in B_{\mathcal{R}(f)}$, $\gamma'(x) \neq (0,1)$, and $C_x$ is not centered at the origin,
	then $\kappa'(x) > 0$.
\end{lem}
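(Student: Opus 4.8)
The plan is to prove the inequality $\kappa'(x) \ge 0$ (with strict inequality under the additional hypotheses) by differentiating the curvature equation $H_f \equiv c$ along $\gamma$. Since at the point $x$ we may replace $\gamma$ by its osculating circle $A_x$ — which agrees with $\gamma$ to third order at $x$, and to fourth order if $x=0$ — it suffices to compute the relevant derivatives along the circle $A_x$ and then translate back to $\gamma$. Concretely, from $H_f = H_0 + H_1 = \kappa + (n-2)\lambda + H_1 = c$ (using Proposition \ref*{prop:mean_curvature}), differentiating with respect to arclength gives
\begin{equation*}
\kappa'(x) = -(n-2)\lambda'(x) - H_1'(x).
\end{equation*}
So the whole problem reduces to showing $(n-2)\lambda'(x) + H_1'(x) \le 0$ under the stated hypotheses, with strict inequality in the improved case. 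This is where the geometry of being in the second quadrant with the canonical circle centered on the non-negative $e_1$ axis (Lemma \ref*{lem:positive_e_1}) will be used.

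**The two terms.** First I would handle $H_1'(x) = \frac{d}{ds}\, g'(|\gamma|)\, N\cdot n$ along $\gamma$. Since $g$ is convex, $g' \ge 0$ for $|x| > \mathcal{R}(f)$ and $g'' \ge 0$ everywhere; the sign of the derivative of $|\gamma|$ along $\gamma$ is controlled by $\gamma' \cdot N \le 0$ (Tangent Restriction, Lemma \ref*{lem:tangent_restriction}), and one also tracks how the angle between $\gamma'$ and the radial direction changes. The key point for the second-quadrant hypothesis is that as we move along $\gamma$ in the direction of increasing arclength with $\gamma'$ in the second quadrant, $|\gamma|$ is non-increasing and $g'(|\gamma|) \ge 0$ (or $=0$ inside $B_{\mathcal{R}(f)}$), and the geometric configuration forces $\frac{d}{ds}\big(g'(|\gamma|)\, N\cdot n\big) \le 0$. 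Second, for $(n-2)\lambda'(x)$: $\lambda(x) = \kappa(C_x) = 1/r(x)$ where $r(x)$ is the radius of the canonical circle, and one computes $\lambda'(x)$ by differentiating the geometric relation defining $C_x$ (its center on the $e_1$-axis, tangent $\gamma'(x)$, passing through $\gamma(x)$). Here the hypothesis $\kappa(x) = \kappa(C_x)$ is crucial: it says $\gamma$ and $C_x$ have the same curvature at $x$, which pins down $F'(x)$ (the motion of the center) in a way that makes $\lambda'(x)$ have a definite sign. Using $F(x) = F'(x)$ considerations and the sign information $a = F(x) \ge 0$ (Lemma \ref*{lem:positive_e_1}) one should get $\lambda'(x) \le 0$.

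**Assembling and the strict case.** Once both $H_1'(x) \le 0$ and $(n-2)\lambda'(x) \le 0$ are established, adding them and using the differentiated curvature equation yields $\kappa'(x) \ge 0$. For the strict inequality: if $\gamma(x) \notin B_{\mathcal{R}(f)}$ then $g'(|\gamma(x)|) > 0$ (since $|\gamma(x)| > \mathcal{R}(f)$ and $g$ is non-constant past that radius, hence $g' > 0$ there), and if moreover $\gamma'(x) \ne (0,1)$ then the angle is genuinely changing so the $H_1'$ contribution is strictly negative; alternatively, if $C_x$ is not centered at the origin then $F(x) > 0$ strictly and the $\lambda'$ contribution is strictly negative. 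Either way at least one of the two terms is strictly negative, forcing $\kappa'(x) > 0$. Throughout, the reduction to the osculating circle $A_x$ (with its tilded quantities $\widetilde\kappa, \widetilde\lambda, \widetilde H_1, \widetilde F$) is what makes the differentiation legitimate and the computation clean, since on a circle these are all explicit smooth functions.

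**Main obstacle.** The hard part will be getting the sign of $(n-2)\lambda'(x)$ — i.e., controlling how the center of the canonical circle moves along $\gamma$ — and correctly combining it with the $H_1'$ term. The bookkeeping for $\lambda'(x)$ requires carefully differentiating the implicit relations defining $C_x$ and exploiting $\kappa(x) = \kappa(C_x)$ to eliminate the otherwise-uncontrolled variation; the second-quadrant condition and Lemma \ref*{lem:positive_e_1} must be invoked at exactly the right moment to fix the sign. A secondary subtlety is the point $x=0$, where one must use the fourth-order (rather than third-order) agreement of $A_0$ with $\gamma$ and the symmetry $\kappa'(0) = 0$ to make the argument go through; there the conclusion $\kappa'(0) \ge 0$ combined with $\kappa'(0) = 0$ is consistent, and the strict case simply cannot occur at $0$ when $C_0$ is centered at the origin.
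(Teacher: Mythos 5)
Your overall strategy is the paper's: differentiate $H_f=\kappa+(n-2)\lambda+H_1=c$ to get $\kappa'(x)=-(n-2)\lambda'(x)-H_1'(x)$, pass to the osculating circle $A_x$, and control the signs of $\lambda'$ and $H_1'$. But the one idea that actually makes this work is missing from your sketch: the hypothesis $\kappa(x)=\kappa(C_x)$ means precisely that $A_x=C_x$, i.e.\ the osculating circle \emph{is} the canonical circle and is therefore centered on the $e_1$-axis with non-negative first coordinate (Lemma \ref*{lem:positive_e_1}). That identification gives $\lambda'(x)=\widetilde{\lambda}'(\widetilde{x})=0$ exactly (the canonical circle of a circle centered on the axis is that circle itself, so $\widetilde\lambda$ is constant), and it reduces the $H_1'$ computation to a circle with center $(a,0)$, $a\ge 0$, which is what Lemma \ref*{lem:curvature_bound_details} handles. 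Your plan of "differentiating the geometric relation defining $C_x$" to get $\lambda'(x)\le 0$ would not close without this: for an osculating circle with center $(a,b)$, $b>0$, and tangent in the second quadrant, the formula of Lemma \ref*{lem:inside_ball_details} gives $\lambda'=b\cos(x/r)/\bigl(r(r\sin(x/r)+b)^2\bigr)\ge 0$, the \emph{wrong} sign; only $b=0$ saves you.

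Your strict case is also wrong in structure. The three extra hypotheses are needed \emph{jointly}, and all of the strictness comes from $H_1'<0$; the $\lambda'$ term can never supply it since $\lambda'\equiv 0$ here. In particular, "if $C_x$ is not centered at the origin then the $\lambda'$ contribution is strictly negative" is false, and "conditions one and two alone give $H_1'<0$" is also false: if $C_x$ \emph{is} centered at the origin then $N\cdot n\equiv 1$ and $|\alpha|$ is constant along $A_x$, so $H_1'\equiv 0$ regardless of whether $\gamma(x)\notin B_{\mathcal{R}(f)}$ or $\gamma'(x)\neq(0,1)$. The correct statement, from the appendix computation with $b=0$, is that $H_1'(\widetilde x)<0$ exactly when $g'(|\gamma(x)|)>0$, $a>0$, and the point is not the top of the circle — i.e.\ all three additional hypotheses at once.
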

\begin{proof}
		In this case, we have that $A_x = C_x$, and both have positive radii.  As such, 
	$A_x$ approximates $\gamma$ up to the third order at $x$.  Because of this, we have that
		$$ \lambda'(x) = \widetilde{\lambda}'(\widetilde{x}) = 0. $$

		We can now reduce the problem to determining the sign of $H_1'(x)$, since
	$H_f'(x) = 0$ and $\lambda'(x) = 0$.  Again, because $\gamma$ is approximated up to the third order by $A_x$ near $x$, 
		$$ H_1'(x) = \widetilde{H_1}'(\widetilde{x}).$$
	Computing $\widetilde{H_1}'(\widetilde{x})$ is an easy exercise (see Appendix 1, Lemma \ref*{lem:curvature_bound_details}) - it is always non-positive,
	and is negative if $\gamma(x) \not \in B_{\mathcal{R}(f)}$, $\gamma'(x) \neq (0,1)$, and $C_x$ is not centered at the origin.
	Note that we also use Lemma \ref*{lem:positive_e_1} to show that the center of $C_x$ is greater than $0$.
\end{proof}

\begin{lem}
	\label{lem:second_derivative_kappa}
	If $\gamma$ is not a centered circle, then $\kappa''(0) > 0$.
\end{lem}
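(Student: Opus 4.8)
The plan is to analyze the curvature function $\kappa$ along $\gamma$ near $x=0$, using the constancy of $H_f$ together with the relation $H_0 = \kappa + (n-2)\lambda$ from Proposition \ref*{prop:mean_curvature}. Since $H_f = H_0 + H_1 = c$ is constant, we have $\kappa(x) + (n-2)\lambda(x) + H_1(x) = c$ identically on $[0,\beta)$. Differentiating twice at $x = 0$ gives
\begin{equation*}
\kappa''(0) = -(n-2)\lambda''(0) - H_1''(0),
\end{equation*}
using that all three functions are smooth at $0$ (since $\gamma$ is regular there). So the task reduces to controlling the signs of $\lambda''(0)$ and $H_1''(0)$, and showing that at least one of the two resulting inequalities is strict.

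First I would exploit the symmetry $\gamma(x) = \gamma(-x)$ from property (8): this forces $\gamma$ to be even about $0$, so $\kappa$, $\lambda$, and $H_1$ are all even functions of $x$, hence $\kappa'(0) = \lambda'(0) = H_1'(0) = 0$, and the second derivatives are the leading-order data. Geometrically, at $x=0$ we have $\gamma'(0) = (0,-1)$ or $(0,1)$ (the tangent at $x^*$ is vertical by the discussion preceding Lemma \ref*{lem:gamma_behavior_beta}); in fact since $\gamma$ is counterclockwise and $x^*$ is the rightmost point, $\gamma'(0) = (0,-1)$. The canonical circle $C_0$ is then a vertical line if and only if... no — $C_0$ is the circle through $\gamma(0) = x^*$ tangent to $(0,-1)$ centered on the $e_1$-axis, which is the \emph{centered} circle of radius $|x^*|$, so $\lambda(0) = 1/|x^*| > 0$ and $F(0) = 0$. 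Now I would compute $\lambda''(0)$: writing $\lambda(x) = 1/r(x)$ where $r(x)$ is the (signed) canonical radius, and using that the canonical circle is the osculating data of a \emph{centered} circle to fourth order when $\gamma$ itself agrees with a centered circle to fourth order — but $\gamma$ does not, by hypothesis. The key point is that $C_0$ centered at the origin means $A_0$ (the true osculating circle) also approximates $\gamma$ to fourth order near $0$, and comparing $A_0$ to the centered circle: if $\gamma$ were to curve \emph{less} than a centered circle near $0$, points of $\gamma$ would escape the ball $B_{|x^*|}$, contradicting property (7) that $|x^*|$ is maximal. This is essentially the argument already used in Lemma \ref*{lem:positive_e_1} for the $x=0$ case, and it should yield $\lambda''(0) \leq 0$ with the inequality governed by whether $\gamma$ agrees with the centered circle. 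Simultaneously, $H_1''(0) \leq 0$ follows from the same kind of computation as in Lemma \ref*{lem:curvature_bound} (the $\widetilde{H_1}'$ calculation of Appendix 1), specialized to the point $x=0$ where $C_0$ is centered at the origin and $\gamma(0) = x^* \notin B_{\mathcal{R}(f)}$ by property (7): since $g'$ is nondecreasing and $g'(|x^*|) > 0$, the second-order behavior of $H_1$ along a vertical-tangent curve at a point outside $B_{\mathcal{R}(f)}$ should be strictly concave, giving $H_1''(0) < 0$.

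The main obstacle is making the strict inequality airtight: I must rule out the coincidence in which $\lambda''(0) = 0$ \emph{and} $H_1''(0) = 0$ simultaneously, which a priori could leave $\kappa''(0) = 0$. The cleanest route is to argue that $H_1''(0) < 0$ strictly — this is purely a fact about $g$ being convex with $g'(|x^*|) > 0$ (guaranteed since $|x^*| > \mathcal{R}(f)$ by property (7), so $g$ is strictly past its flat region) and about the curve having vertical tangent there, and it does not depend on $\gamma$ being a centered circle or not. Once $H_1''(0) < 0$ and $\lambda''(0) \leq 0$ are established, $\kappa''(0) = -(n-2)\lambda''(0) - H_1''(0) > 0$ follows immediately, and the hypothesis that $\gamma$ is not a centered circle is actually only needed to guarantee that $\gamma$ is defined and smooth in a genuine neighborhood of $0$ with this reflection symmetry (if $\gamma$ were a centered circle, $\kappa$ would be constant and $\kappa''(0) = 0$, consistent with $\lambda''(0) = 0$, $H_1''(0) = 0$ being impossible — so in fact the non-circle hypothesis should enter precisely by forcing the strict inequality somewhere). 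I would therefore structure the proof as: (i) reduce to $\kappa''(0) = -(n-2)\lambda''(0) - H_1''(0)$ via $H_f \equiv c$ and evenness; (ii) show $\lambda''(0) \leq 0$ by the ball-containment argument of Lemma \ref*{lem:positive_e_1}; (iii) show $H_1''(0) < 0$ by the Appendix 1 computation at a vertical-tangent point outside $B_{\mathcal{R}(f)}$; (iv) conclude.
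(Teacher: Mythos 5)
Your high-level decomposition is the same as the paper's: differentiate $H_f=\kappa+(n-2)\lambda+H_1\equiv c$ twice at $0$, use evenness to kill first derivatives, and reduce to signing $\lambda''(0)$ and $H_1''(0)$. But there is a genuine gap at the crucial step, and it stems from a misidentification of $C_0$. You assert that $C_0$ is the centered circle of radius $|x^*|$ with $F(0)=0$. It is not: by Definition \ref*{defn:canonical_circle}, $C_0=\lim_{x\to 0^+}C_x$ and satisfies $\kappa(C_0)=\kappa(0)$, so it is the osculating circle $A_0$ of $\gamma$ at $0$, centered at $(|x^*|-1/\kappa(0),\,0)$. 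It is centered at the origin precisely when $\kappa(0)=1/|x^*|$, and Lemma \ref*{lem:C_x_equality} shows that this happens if and only if $\gamma$ is a centered circle. This matters because your claim that $H_1''(0)<0$ ``does not depend on $\gamma$ being a centered circle or not'' is false: if $C_0$ were centered at the origin, $H_1$ would be constant along it to the relevant order and $H_1''(0)=0$ (indeed, the Appendix computation gives $(N'\cdot n+N\cdot n')'(0)=-\frac{a^2+a^3/r}{r(a+r)^3}$, which vanishes when the center's $e_1$-coordinate $a$ is $0$). The non-circle hypothesis enters exactly here: Lemma \ref*{lem:C_x_equality} forces $C_0$ to be non-centered, Lemma \ref*{lem:positive_e_1} gives $a\geq 0$, hence $a>0$, and only then does Lemma \ref*{lem:second_derivative_kappa_details} (together with $\gamma(0)\notin B_{\mathcal{R}(f)}$) yield the strict inequality $H_1''(0)<0$. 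You sense this tension in your parenthetical remark but never resolve it; as written, step (iii) of your plan would prove a false statement.

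Your treatment of $\lambda''(0)$ is also shakier than necessary. The paper gets $\lambda''(0)=0$ exactly, not merely $\leq 0$: since $\kappa'(0)=0$ by evenness, $A_0=C_0$ agrees with $\gamma$ to fourth order at $0$, and the canonical-circle curvature along any circle centered on the $e_1$-axis is constant, so $\lambda''(0)=\widetilde{\lambda}''(\widetilde{0})=0$. Your proposed ball-containment argument for $\lambda''(0)\leq 0$ rests on the same misreading of $C_0$ and would need to be rebuilt. (Minor: with the paper's counterclockwise convention and $\gamma_2>0$ on $(0,\beta)$, one has $\gamma'(0)=(0,1)$, not $(0,-1)$; this does not affect the structure of the argument.)
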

\begin{proof}
		In this case, we have that $C_0 = A_0$, and both approximate $\gamma$ up to the fourth order near $0$.  As such, we again
	have that $\lambda''(0) = \widetilde{\lambda}''(\widetilde{0}) = 0$.  Hence, since $H_f''(0) = 0$, we need only show that $H_1''(0) < 0$.  We next observe
	that $C_0$ cannot be a centered circle, as $\kappa(0) = \kappa(C_0)$, and so by Lemma \ref*{lem:C_x_equality}, $\gamma$ would be
	a centered circle.  Since we assumed that this is not the case, $C_0$ is not centered.   Combining this with the fact
	that $C_0$ approximates $\gamma$ up to the \emph{fourth} order, we have that 
		$$H_1''(0) = \widetilde{H_1}''(\widetilde{0}),$$
	which is negative by a straightforward computation (see Appendix 1, Lemma \ref*{lem:second_derivative_kappa_details}).  Note that this uses the fact that $\gamma(0) \not \in
	B_{\mathcal{R}(f)}$.
\end{proof}

\begin{lem}
	\label{lem:inside_ball}
	If $x \in [0, \beta)$, $\kappa(x) > \kappa(C_x) > 0$, $\gamma'(x)$ lies in the third quadrant
	and $x \in B_{\mathcal{R}(f)}$, then $\kappa'(x) \geq 0$, and $\lambda'(x) \leq 0$.
\end{lem}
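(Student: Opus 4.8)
The plan is to follow the template of Lemmas \ref*{lem:curvature_bound} and \ref*{lem:second_derivative_kappa}: replace $\gamma$ near $x$ by the osculating circle $A_x$ and read the relevant quantities off an elementary computation on that circle. First I would note that the hypothesis $\kappa(x) > \kappa(C_x) > 0$ forces $\kappa(x) > 0$, so $A_x$ is a genuine circle of radius $1/\kappa(x)$, and that $\gamma'(x)$ lying in the third quadrant forces $x \in (0,\beta)$ (since $\gamma'(0) = (0,1)$), where $\gamma$ is smooth and $A_x$ agrees with $\gamma$ in position, tangent, and curvature at $x$ (it approximates $\gamma$ to third order there). Both $\lambda$ and $H_1 = \partial g/\partial\nu$ are determined pointwise by the position and unit tangent of the curve, so their arclength derivatives at $x$ depend only on the position, tangent, and curvature of $\gamma$ at $x$; hence $\lambda'(x) = \widetilde\lambda'(\widetilde x)$ and $H_1'(x) = \widetilde{H_1}'(\widetilde x)$, and it suffices to compute on $A_x$.

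Next I would dispose of $H_1'(x)$, reducing both conclusions to the single inequality $\lambda'(x) \le 0$. Since $f$ is constant on $B_{\mathcal{R}(f)}$, the derivative $g'$ vanishes on $[0,\mathcal{R}(f)]$, so $H_1 \equiv 0$ at every point where $|\gamma| \le \mathcal{R}(f)$. Because $|\gamma|$ is non-increasing on $[0,\beta)$ (property $7$) and $|\gamma(x)| \le \mathcal{R}(f)$, we have $|\gamma(y)| \le \mathcal{R}(f)$ for $y$ slightly larger than $x$, so $H_1$ vanishes identically just to the right of $x$; as $H_1$ is differentiable at $x$, this gives $H_1'(x) = 0$. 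Differentiating the stationarity identity $\kappa + (n-2)\lambda + H_1 \equiv c$ (Proposition \ref*{prop:mean_curvature} together with Theorem \ref*{thm:regularity}(3)) at $x$ then yields $\kappa'(x) = -(n-2)\lambda'(x)$, so, since $n \ge 2$, both $\lambda'(x) \le 0$ and $\kappa'(x) \ge 0$ follow once $\lambda'(x) \le 0$ is known.

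Finally I would compute $\lambda'(x)$. With $\gamma'(x)$ in the third quadrant we have $\gamma_1'(x) < 0 < \gamma_2(x)$; one checks that the canonical circle at $x$ is counterclockwise oriented with $\kappa(C_x) = -\gamma_1'(x)/\gamma_2(x)$, and that the identity $\lambda(y) = -\gamma_1'(y)/\gamma_2(y)$ persists for $y$ near $x$, where the tangent stays in the third quadrant. Since $\gamma$ is unit-speed with signed curvature $\kappa$, we have $\gamma_1'' = -\kappa\,\gamma_2'$, and differentiating gives
$$\lambda'(x) = \frac{\gamma_2'(x)\bigl(\kappa(x)\gamma_2(x) + \gamma_1'(x)\bigr)}{\gamma_2(x)^2} = \frac{\gamma_2'(x)}{\gamma_2(x)}\bigl(\kappa(x) - \kappa(C_x)\bigr).$$
Here $\gamma_2'(x)/\gamma_2(x) < 0$ (third quadrant, $\gamma_2(x) > 0$) while $\kappa(x) - \kappa(C_x) > 0$ by hypothesis, so $\lambda'(x) < 0$; hence $\lambda'(x) \le 0$ and $\kappa'(x) = -(n-2)\lambda'(x) \ge 0$. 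The routine verification of this formula — equivalently, of the corresponding expression for $\widetilde\lambda'(\widetilde x)$ on $A_x$ — is what I would relegate to Appendix 1, Lemma \ref*{lem:inside_ball_details}. I expect the only mildly delicate point to be the borderline case $|\gamma(x)| = \mathcal{R}(f)$ in the vanishing of $H_1'(x)$, which the one-sided argument above handles precisely because $H_1$ is smooth at $x$; otherwise the argument is mechanical, with no conceptual obstacle.
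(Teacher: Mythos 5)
Your proposal is correct and follows essentially the same route as the paper: reduce both conclusions to $\lambda'(x)\le 0$ via $H_1'(x)=0$ and the constancy of $H_f=\kappa+(n-2)\lambda+H_1$, then verify $\lambda'\le 0$ by a computation depending only on position, tangent, and curvature at $x$. The only difference is cosmetic — you compute $\lambda'=\tfrac{\gamma_2'}{\gamma_2}(\kappa-\lambda)$ intrinsically from $\lambda=-\gamma_1'/\gamma_2$, whereas the paper parametrizes the osculating circle $A_x$ in Appendix~1 (Lemma~\ref{lem:inside_ball_details}) and arrives at the identical quantity; your one-sided argument for $H_1'(x)=0$ at the borderline radius $|\gamma(x)|=\mathcal{R}(f)$ is a detail the paper leaves implicit.
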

\begin{proof}
		We again consider $A_x$, which approximates $\gamma$ at $x$ up to the third order.  Let the center of
	$A_x$ be $(a,b)$.  Since $\kappa(x) > \kappa(C_x) > 0$, $b > 0$.  Due to the assumption that $x \in B_{\mathcal{R}(f)}$,
	$H_1'(x) = 0$, and so $H_0'(x) = 0$ as well.  As such, if we can show that $\lambda'(x) \leq 0$, then we will be done.  To do this,
	we observe that
		$$ \lambda'(x) = \widetilde{\lambda}'(\widetilde{x}) \leq 0.$$
	This can be computed in a straightforward manner (see Appendix 1, Lemma \ref*{lem:inside_ball_details}), using the fact that $\gamma'(x)$ lies in the third quadrant and
	$\kappa(C_x) > 0$.
\end{proof}

\begin{lem}
	\label{lem:outside_ball}
	If $x \in [0, \beta)$ with $\gamma'(x) = (-1,0)$, $\gamma_1(x) > 0$ and $\kappa(x) \geq \kappa(C_x) > 0$, and if
	$\gamma(x) \not \in B_{\mathcal{R}(f)}$, then $\kappa'(x) > 0$.
\end{lem}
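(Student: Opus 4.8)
The plan is to run the standard local-comparison device of this section. Near $x$, replace $\gamma$ by the oriented circle $A_x$ tangent to $\gamma$ at $x$ with signed curvature $\kappa(x)$; since $\kappa(x)\ge\kappa(C_x)>0$, this $A_x$ is a genuine circle of radius $1/\kappa(x)$, and it agrees with $\gamma$ to third order at $x$. First I would record the local geometry at $x$. Because $\gamma'(x)=(-1,0)\neq(0,\pm1)$, we have $x\neq 0$ (at $0$ the tangent is $(0,1)$), so $x\in(0,\beta)$ and $\gamma_2(x)>0$; the canonical circle $C_x$ is then the counterclockwise circle centered at $(\gamma_1(x),0)$ of radius $\gamma_2(x)$, so $\lambda(x)=\kappa(C_x)=1/\gamma_2(x)$ and the hypothesis $\kappa(x)\ge\kappa(C_x)$ reads $\kappa(x)\gamma_2(x)\ge 1$; the outward unit normal is $\nu(x)=(0,1)$; the center of $A_x$ is $(\gamma_1(x),\gamma_2(x)-1/\kappa(x))$, directly below $\gamma(x)$; and $\gamma(x)\notin B_{\mathcal{R}(f)}$ forces $g'(|\gamma(x)|)>0$, since $g'$ is nondecreasing by log-convexity and is strictly positive beyond $\mathcal{R}(f)$.

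Differentiating $H_f\equiv c$ and using $H_f=\kappa+(n-2)\lambda+H_1$ (Proposition \ref*{prop:mean_curvature}) gives $\kappa'(x)=-(n-2)\lambda'(x)-H_1'(x)$, and third-order agreement lets me compute the right-hand side along $A_x$: $\lambda'(x)=\widetilde\lambda'(\widetilde x)$ and $H_1'(x)=\widetilde{H_1}'(\widetilde x)$. I expect the first to vanish: parametrizing $A_x$ by arclength $t$ about $\widetilde x$, the canonical circle $\widetilde C_{\alpha(t)}$ has radius $b/\cos(\kappa(x)t)+1/\kappa(x)$ with $b=\gamma_2(x)-1/\kappa(x)$, which is an even function of $t$, so $\widetilde\lambda'(\widetilde x)=0$. (Geometrically: at a point of the circle $A_x$ with horizontal tangent, the canonical-circle curvature is stationary.) This is a one-line computation of exactly the kind deferred to Appendix 1 in Lemmas \ref*{lem:curvature_bound}, \ref*{lem:second_derivative_kappa} and \ref*{lem:inside_ball}. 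Hence $\kappa'(x)=-H_1'(x)$, and it remains to prove $\widetilde{H_1}'(\widetilde x)<0$.

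For this I would write $\widetilde{H_1}(t)=g'(\rho(t))\,p(t)/\rho(t)$ with $\rho(t)=|\alpha(t)|$ and $p(t)=\alpha(t)\cdot\nu(t)$, and differentiate at $\widetilde x$. The $g''$-term equals $g''(|\gamma(x)|)\,\rho'(\widetilde x)\,\gamma_2(x)/|\gamma(x)|$, which is $\le 0$ since $g''\ge 0$ and $\rho'(\widetilde x)=-\gamma_1(x)/|\gamma(x)|<0$ (here $\gamma_1(x)>0$ is used). For the remaining term $g'(|\gamma(x)|)\,\frac{d}{dt}(p/\rho)\big|_{\widetilde x}$, using the Frenet relation $\nu'=\kappa(x)\,\alpha'$ one obtains
\[ \frac{d}{dt}\Big(\frac{p}{\rho}\Big)\Big|_{\widetilde x}=\frac{\gamma_1(x)}{|\gamma(x)|^{3}}\big(\gamma_2(x)-\kappa(x)|\gamma(x)|^{2}\big), \]
which is strictly negative: $\gamma_1(x)>0$, and $\kappa(x)|\gamma(x)|^{2}=\kappa(x)\gamma_1(x)^2+\kappa(x)\gamma_2(x)^2\ge\kappa(x)\gamma_1(x)^2+\gamma_2(x)>\gamma_2(x)$ by $\kappa(x)\gamma_2(x)\ge 1$ and $\gamma_1(x)>0$. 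Since $g'(|\gamma(x)|)>0$, this term is negative, so $\widetilde{H_1}'(\widetilde x)<0$ and therefore $\kappa'(x)=-H_1'(x)>0$.

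The only genuine work here is the two jet computations along $A_x$ — the evenness giving $\widetilde\lambda'(\widetilde x)=0$ and the displayed derivative of $p/\rho$ — both elementary but requiring careful bookkeeping of orientations and of the sign conventions for $\kappa$, $\kappa(C_x)$ and $\nu$; this is the same bookkeeping already packaged into Appendix 1 for the analogous lemmas, so I would either put this computation there too or inline it. Conceptually the statement is easy: the two hypotheses particular to this case, $\gamma_1(x)>0$ and $\gamma(x)\notin B_{\mathcal{R}(f)}$ (so $g'>0$), are exactly what force $H_1$ to be strictly decreasing at $x$, while the curvature bound $\kappa(x)\ge\kappa(C_x)$ makes the decrease strict.
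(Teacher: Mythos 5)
Your argument is correct and is essentially the paper's proof: approximate $\gamma$ by $A_x$ to third order, observe $\lambda'(x)=0$ at a point with horizontal tangent, and reduce to showing $\widetilde{H_1}'(\widetilde x)<0$, which the paper delegates to Appendix 1 (Lemma \ref*{lem:curvature_bound_details}) with $a=\gamma_1(x)>0$ and $b=\gamma_2(x)-1/\kappa(x)\ge 0$. Your inlined computation of $\tfrac{d}{dt}(p/\rho)$ at $\widetilde x$ evaluates to $-a(a^2+b^2+br)/(r|\gamma(x)|^{3})$, consistent with the appendix's formula, and your use of $\kappa(x)\gamma_2(x)\ge 1$ and $\gamma_1(x)>0$ corresponds exactly to the appendix's hypotheses $b\ge 0$ and $a\sin(\tfrac{x}{r})>b\cos(\tfrac{x}{r})$.
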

\begin{proof}
		We have that $A_x$ approximates $\gamma$ at $x$ up to the third order.  As such,
	$\lambda'(x) = \widetilde{\lambda}'(\widetilde{x})$.  Since $\gamma'(x) = (-1,0)$, $\widetilde{\lambda}'(\widetilde{x}) = 0$, and so
	$\lambda'(x) = 0$ as well.  Since $H_f'(x) = 0$, if we can show that $H_1'(x) < 0$, then
	this will imply that $\kappa'(x) > 0$.  To facilitate this, we use the fact that
		$$H_1'(x) = \widetilde{H_1}'(\widetilde{x}) < 0.$$
	The last fact can be computed directly using the premises of the lemma, especially the fact that
	$\gamma(x) \not \in B_{\mathcal{R}(f)}$ (see Appendix 1, Lemma \ref*{lem:curvature_bound_details}).

\end{proof}

The last portion of this section is devoted to proving two theorems which will be useful tools later.
The first allows us to translate statements about the curvature of the graphs of two functions into comparison
statements regarding their values and derivatives, and the second will allow us to compare values of $H_1 = \frac{\partial g} {\partial \nu}$ for different unit vectors $\nu$.

Let us consider a $C^2$ function $h$ with $h:(a,b) \rightarrow \RR_{\geq 0}$, $b > a$.   Let $t_{h} (x)$ denote the unit tangent vector
$\frac{(1,h'(x))}{|(1, h'(x))|}$, $\theta: S^1 \rightarrow (- \pi, \pi]$ denote the counterclockwise angle from $0$ of a unit vector $v$,
and lastly let $\kappa_{h} (x)$ denote the upward curvature of the graph of $h$ at $x$.  

\begin{prop}[Curvature Comparison Theorem]
	\label{prop:curvature_comparison}
	Consider two $C^2$ functions $f,g:(a,b) \rightarrow \RR_{\geq 0}$ with $b > a$.  Let us make the following assumptions
	on $f$ and $g$ (see Figure \ref*{fig:curvature_comparison}):
	\begin{enumerate}
		\item $\lim_{x \rightarrow b^-} t_f (x)$ and $\lim_{x \rightarrow b^-} t_g (x)$ exist
		\item $\lim_{x \rightarrow b^-} f(x)$ and $\lim_{x \rightarrow b^-} g(x)$ exist
		\item $f'(x) \geq 0$ and $g'(x) \geq 0$ on $(a,b)$
		\item $\lim_{x \rightarrow b^-} f(x) \leq \lim_{x \rightarrow b^-} g(x)$, 
			$\lim_{x \rightarrow b^-} \theta(t_f(x)) \geq \lim_{x \rightarrow b^-} \theta(t_g (x))$, and
			$\kappa_f (x) \leq \kappa_g (x)$ for all $x \in (a,b)$
	\end{enumerate}
	Then, for every $x \in (a,b)$,
		$$ f(x) \leq g(x) $$
	and
		$$ \theta( t_f(x) ) \geq  \theta( t_g(x) ). $$
	Additionally, if there exists a point $y \in (a,b)$ such that
		$$\kappa_f(y) < \kappa_g(y),$$
	then there is some $\phi > 0$ such that
		$$ \phi \leq \theta(t_f (x) ) - \theta(t_g (x) ) $$
	for all $x \in (a, y)$.
\end{prop}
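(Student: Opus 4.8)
The plan is to integrate the curvature hypothesis backwards from the common endpoint $b$. The natural object is the turning angle $\theta(t_f(x))$: since $\kappa_f$ is the derivative of the turning angle along the graph with respect to arclength, and since along a graph $dx$ is a positive multiple of $ds$, one has $\frac{d}{dx}\theta(t_f(x)) = \kappa_f(x)\sqrt{1+f'(x)^2} \geq 0$ by hypothesis (3) (all graphs have nonnegative slope, so the turning angle lives in $[0,\pi/2)$, and its $x$-derivative has the same sign as $\kappa_f$). First I would establish the angle inequality $\theta(t_f(x)) \geq \theta(t_g(x))$ on all of $(a,b)$. Set $\psi(x) = \theta(t_f(x)) - \theta(t_g(x))$. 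I would like to conclude from $\psi(b^-) \geq 0$ and from a sign condition on $\psi'$ that $\psi \geq 0$ throughout — but $\psi'$ is not obviously signed, because $\kappa_f \le \kappa_g$ only controls the arclength derivatives and the arclength elements $\sqrt{1+f'^2}$ and $\sqrt{1+g'^2}$ differ. The fix is to argue by contradiction at a first crossing: suppose $\psi(x_0) < 0$ for some $x_0$; let $x_1 = \inf\{x \ge x_0 : \psi(x) \ge 0\} \le b$, so $\psi < 0$ on $[x_0,x_1)$ and $\psi(x_1) \ge 0$. On $[x_0, x_1)$ we have $\theta(t_f) < \theta(t_g)$, hence $0 \le f' < g'$ there, hence $f$ and $g$ are both increasing and the ordering of slopes is controlled. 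Now compare at $x_1$ versus running backward: on $(x_0,x_1)$, $\theta(t_g) > \theta(t_f) \ge 0$ gives $\sqrt{1+g'^2} > \sqrt{1+f'^2}$, so $\frac{d}{dx}\theta(t_g) = \kappa_g\sqrt{1+g'^2}$ and $\frac{d}{dx}\theta(t_f) = \kappa_f\sqrt{1+f'^2}$; combined with $\kappa_f \le \kappa_g$ this does \emph{not} immediately give $\psi' \ge 0$ unless $\kappa_g \ge 0$, which we do not have. So instead I would run the comparison purely in terms of the angle: parametrize both curves by arclength near $b$ and use that $\theta$ is monotone nondecreasing in arclength for each (slopes nonnegative), so each curve's slope is determined by its turning angle; the cleanest route is to observe directly that $\frac{d}{dx}\theta(t_h(x)) = \frac{h''(x)}{1+h'(x)^2}$, so integrating from $x$ to $b$ gives $\theta(t_h(x)) = \lim_{s\to b^-}\theta(t_h(s)) - \int_x^b \frac{h''}{1+h'^2}$, and $\frac{h''}{1+h'^2} = \kappa_h\sqrt{1+h'^2}$.

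Given the delicacy, the actual argument I would commit to is the following two-step backward comparison. Step one: prove $\theta(t_f(x)) \ge \theta(t_g(x))$ on $(a,b)$. Suppose not, and let $x_1$ be as above. On $[x_0,x_1)$ we have $\theta(t_f) < \theta(t_g)$, so $\theta(t_g) > 0$, so $g' > f' \ge 0$; in particular $g$ is strictly increasing on this interval. Also $f(x_1) \le g(x_1)$: indeed either $x_1 = b$, where the hypothesis (4) gives $\lim f \le \lim g$, or $x_1 < b$ where we may first have established $f \le g$ to the right of $x_1$ and pass to the limit — to avoid circularity I instead prove $f \le g$ and the angle inequality simultaneously by a single backward continuity/open-closed argument on the set $\{x : f \ge g \text{ on } [x,b) \text{ and } \theta(t_f) \le \theta(t_g) \text{ on } [x,b)\}$... this is getting tangled. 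The honest main obstacle is exactly this intertwining of the two conclusions, and the right tool to cut it is the \textbf{first crossing point} argument applied to whichever inequality fails first as $x$ decreases from $b$: let $x_1 = \sup\{x \le b : f(x) > g(x) \text{ or } \theta(t_f(x)) < \theta(t_g(x))\}$ (with $x_1 = a$ if the set is empty). On $(x_1, b)$ both inequalities hold, so there $g' \ge f' \ge 0$ and $\theta(t_g) \ge 0$, hence $\sqrt{1+g'^2} \ge \sqrt{1+f'^2}$; then for any $x \in (x_1,b)$, using $\frac{d}{dx}\theta(t_h) = \kappa_h\sqrt{1+h'^2}$ and hypothesis (4)'s endpoint angle inequality, $\theta(t_f(x)) - \theta(t_g(x)) = [\lim_b \theta(t_f) - \lim_b\theta(t_g)] + \int_x^b(\kappa_g\sqrt{1+g'^2} - \kappa_f\sqrt{1+f'^2})\,dt \ge \int_x^b(\kappa_g - \kappa_f)\sqrt{1+f'^2}\,dt \ge 0$, where the middle step uses $\kappa_g\sqrt{1+g'^2} \ge \kappa_g\sqrt{1+f'^2}$ when $\kappa_g \ge 0$ and $\kappa_g\sqrt{1+g'^2} \ge \kappa_f\sqrt{1+f'^2} \ge \kappa_g\sqrt{1+f'^2}$ is false in general — so I split into the region where $\kappa_g \ge 0$ and where $\kappa_g < 0$, in the latter using $\kappa_g\sqrt{1+g'^2} \ge \kappa_g\sqrt{1+f'^2}$ fails but $\kappa_g \ge \kappa_f$ still lets us bound $\kappa_g\sqrt{1+g'^2} - \kappa_f\sqrt{1+f'^2} \ge \kappa_g(\sqrt{1+g'^2}-\sqrt{1+f'^2}) + (\kappa_g-\kappa_f)\sqrt{1+f'^2}$ and the first term is negative; so this needs the extra input that on $(x_1,b)$ the slope gap $g'-f'$ is itself controlled by the angle gap, which is zero in the limit — a bootstrapping that does close but requires care. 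This is the crux, and I expect it to be the main technical obstacle; everything downstream is comparison of the resulting first-order ODEs.

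Step two: deduce $f \le g$. From $\theta(t_f(x)) \ge \theta(t_g(x))$ on $(a,b)$ and both slopes nonnegative we get $f'(x) \ge g'(x)$ on $(a,b)$, hence $g - f$ is nondecreasing, hence $g(x) - f(x) \ge \lim_{s \to b^-}(g(s)-f(s)) \cdot$... wait, nondecreasing means $g(x)-f(x) \le \lim_{b^-}(g-f)$, and by hypothesis (4) that limit is $\ge 0$; but I need $g(x)-f(x) \ge 0$. So in fact $f' \ge g'$ makes $g-f$ \emph{nonincreasing}, hence for $x < s < b$, $g(x) - f(x) \ge g(s) - f(s) \to \lim_{b^-}(g-f) \ge 0$. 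Good — that gives $f(x) \le g(x)$ on $(a,b)$, which is the first conclusion. For the quantitative final claim: suppose $\kappa_f(y) < \kappa_g(y)$ at some $y \in (a,b)$. By continuity there is an interval $[y-\delta, y+\delta] \subset (a,b)$ and $\varepsilon > 0$ with $\kappa_g - \kappa_f \ge \varepsilon$ there. Using the integral formula from Step one, for any $x \in (a,y)$,
\begin{equation*}
\theta(t_f(x)) - \theta(t_g(x)) \ge \int_x^b (\kappa_g - \kappa_f)\sqrt{1+f'^2}\,dt \ge \int_{y-\delta}^{y+\delta}\varepsilon\sqrt{1+f'^2}\,dt \ge 2\delta\varepsilon =: \phi > 0,
\end{equation*}
since the integrand is everywhere $\ge 0$ (the inequality $\kappa_g\sqrt{1+g'^2} - \kappa_f\sqrt{1+f'^2} \ge (\kappa_g-\kappa_f)\sqrt{1+f'^2} \ge 0$ pointwise on $(x,b)$ is exactly what Step one establishes en route) and we simply discard the mass outside $[y-\delta,y+\delta]$. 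This $\phi$ works uniformly for all $x \in (a,y)$, as required. The only place genuine work is needed is making the pointwise inequality $\kappa_g\sqrt{1+g'^2} \ge \kappa_f\sqrt{1+f'^2}$ rigorous given only $\kappa_f \le \kappa_g$ and nonnegative slopes; I would handle this by the first-crossing bootstrap sketched above, showing the bad set is empty.
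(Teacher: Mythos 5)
There is a genuine gap, and you have correctly located it yourself: everything hinges on the pointwise (or integrated) inequality $\kappa_g\sqrt{1+g'^2}\geq \kappa_f\sqrt{1+f'^2}$, which you never establish. Your sketched first-crossing bootstrap does not close as written. First, there is a sign error: on the region where $\theta(t_f)\geq\theta(t_g)$ you have $f'\geq g'$ (not $g'\geq f'$), hence $\sqrt{1+f'^2}\geq\sqrt{1+g'^2}$; with the signs corrected, the step $\kappa_g\sqrt{1+g'^2}\geq\kappa_g\sqrt{1+f'^2}$ that you need holds only where $\kappa_g\leq 0$, and fails precisely where $\kappa_g>0$ --- which is the generic situation in the application. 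Your own decomposition shows the error term $\kappa_g\bigl(\sqrt{1+g'^2}-\sqrt{1+f'^2}\bigr)$ has no useful sign, and the proposed bootstrap ("the slope gap is controlled by the angle gap") is never carried out and would require a Gronwall-type estimate you do not supply.

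The missing idea is a change of variables that eliminates the arclength factor entirely: since $h'=\tan\theta_h$ one computes
$$\kappa_h(x)=\frac{h''(x)}{(1+h'(x)^2)^{3/2}}=\cos(\theta_h(x))\,\theta_h'(x)=\bigl(\sin(\theta_h(x))\bigr)',$$
so the curvature is \emph{exactly} the $x$-derivative of $\sin\theta_h$, with no factor of $\sqrt{1+h'^2}$. Integrating from $x$ to $b$ and using the endpoint angle inequality gives
$$\sin(\theta_f(x))-\sin(\theta_g(x))\geq\int_x^b\bigl(\kappa_g(t)-\kappa_f(t)\bigr)\,dt\geq 0$$
directly from $\kappa_f\leq\kappa_g$, with no bootstrap, no sign cases, and no crossing argument. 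Since $\theta_f,\theta_g\in[0,\pi/2)$ and $\sin$ is increasing there, $\theta_f\geq\theta_g$ follows; your Step two (monotonicity of $g-f$) and your treatment of the strict case are then fine, noting that $\sin\theta_f-\sin\theta_g\geq c$ implies $\theta_f-\theta_g\geq c$ because $\sin$ is $1$-Lipschitz. This is the paper's argument; without this substitution your proof is incomplete at its central step.
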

\begin{proof}
	At a given point $x \in (a,b)$, let
	$\theta_f(x) = \theta(t_f(x))$ and $\theta_g(x) = \theta(t_g(x))$.  Since $t_f$ and $t_g$ both lie in the first quadrant,
	$\theta_f$ and $\theta_g$ both lie in the interval $[0, \frac{\pi}{2})$.
	Note that $f'(x) = \tan( \theta_f (x) )$ and $g'(x) = \tan( \theta_g (x) )$.  Thus, if we can show that
	$\theta_f(x) \geq \theta_g(x)$ for all $x \in (a,b)$, then the fact that
	$\lim_{x \rightarrow b^-} f(x) \leq \lim_{x \rightarrow b^-} g(x)$ will also imply that $f(x) \leq g(x)$ for all $x \in (a,b)$.
	To this end, we know that
		$$\kappa_f(x) = \frac{f''(x)}{(1 + (f'(x))^2)^{\frac{3}{2}}}$$
	and
		$$\kappa_g(x) = \frac{g''(x)}{(1 + (g'(x))^2)^{\frac{3}{2}}}.$$
	Since $f''(x) = \frac{\theta'_f(x)}{\cos^2(\theta_f(x))}$ and $g''(x) = \frac{\theta'_g(x)}{\cos^2(\theta_g(x))}$, we have
	that these formulae become
		$$\kappa_f(x) = \cos(\theta_f(x)) \theta'_f(x) = (\sin(\theta_f(x)))'$$
	and
		$$\kappa_g(x) = \cos(\theta_g(x)) \theta'_g(x) = (\sin(\theta_g(x)))'.$$
	Hence, for every $x \in (a,b)$,
		$$ \sin(\theta_f(x)) - \sin(\theta_g(x)) \geq \int_x^b \kappa_g(t) - \kappa_f(t) dt, $$
	since $\lim_{x \rightarrow b^-} \theta_f(x) - \theta_g (x) \geq 0$.
	If $\kappa_f \leq \kappa_g$, then $\sin(\theta_f) \geq \sin(\theta_g)$, and so $\theta_f \geq \theta_g$, as desired.
	
	If there exists a point $y \in (a,b)$ with $\kappa_f(y) < \kappa_g(y)$, then there is some neighborhood of $y$
	on which this is true.  As such, for any $x \in (a,y)$,
		$$\int_x^b \kappa_g(t) - \kappa_f(t) dt \geq c > 0,$$
	implying that $\sin(\theta_f(x)) \geq \sin(\theta_g(x)) + c$ for all such $x$.
	This immediately yields the existence of
	a $\phi > 0$ such that $\theta_f - \theta_g > \phi$ on $(a, y)$.
\end{proof}

\begin{figure}[ht]
\caption{The functions $f$ and $g$.}
\centering
  \includegraphics[width=1.00\textwidth]{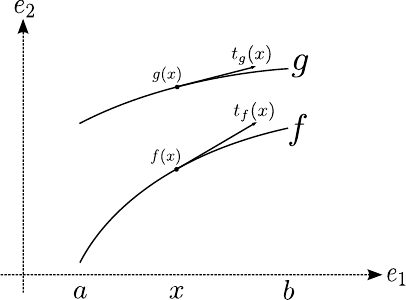}
\label{fig:curvature_comparison}
\end{figure}

We will need a definition before continuing with our second theorem.

\begin{defn}
	\label{defn:admissible}
	Consider a fixed pair of points $(x_1, y)$ and $(x_2, y)$ with
	$y > 0$ and $x_1 \geq x_2$, and a pair of unit vectors
	$v_1, v_2 \in \RR^2$ such that $v_1$ is strictly in the second quadrant and $v_2$ is strictly in the third quadrant.
	Let $C_1$ be the canonical circle
	with respect to $v_1$ at $(x_1,y)$ with center $(a_1,0)$ and radius
	$r_1$, and $C_2$ be the canonical circle
	with respect to $v_2$ at $(x_2,y)$ with center $(a_2,0)$ and radius
	$r_2$.  If $a_1 \geq 0$, $r_2 \geq r_1$, and
	$x_1 - a_1 \geq a_1 - x_2$, then we say that $(v_1, v_2)$ are admissible with respect to
	$(x_1, y)$ and $(x_2, y)$.  Furthermore, let $N(x_1,y) = \frac{(x_1,y)}{|(x_1,y)|}$ and
	define $N(x_2,y)$ similarly.  This is shown in Figure \ref*{fig:admissible}.
\end{defn}

\begin{figure}[ht]
\caption{A pair of vectors $v_1$ and $v_2$ which are admissible with respect to a pair of points $(x_1, y)$ and $(x_2,y)$.}
\centering
  \includegraphics[width=1.00\textwidth]{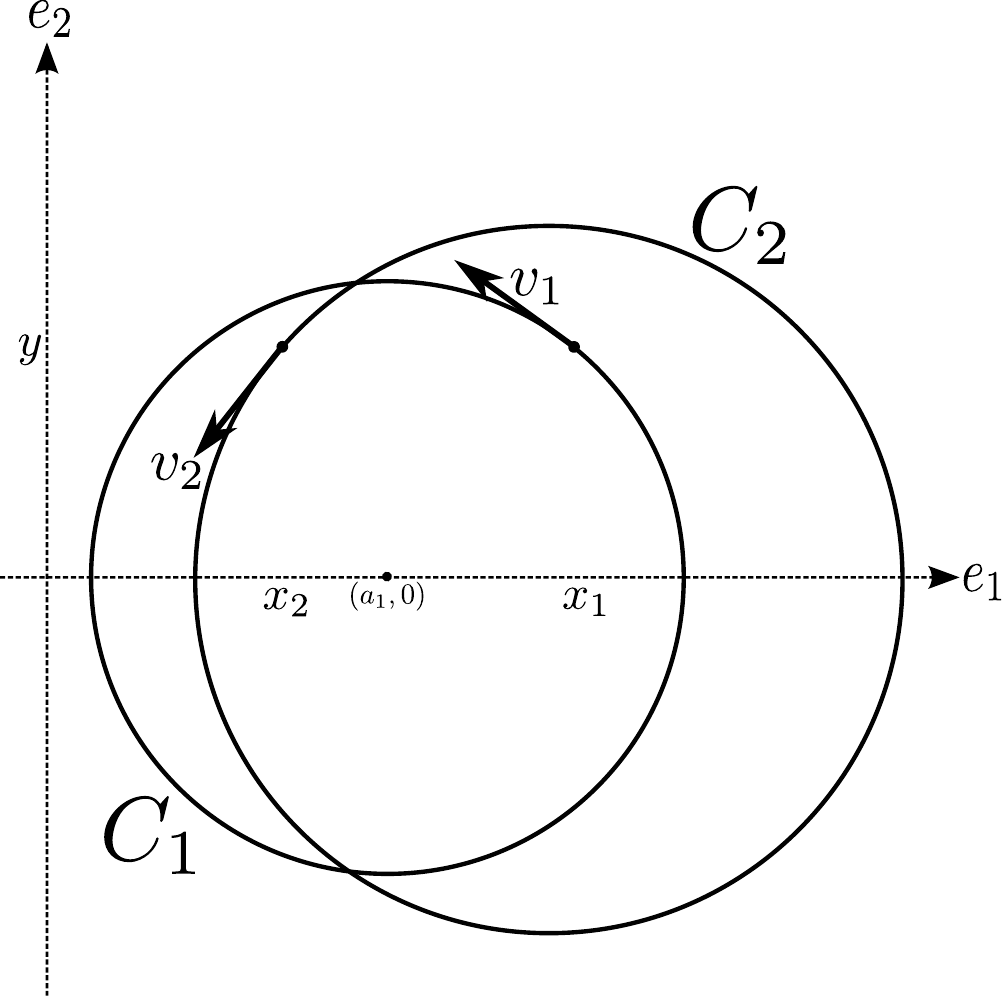}
\label{fig:admissible}
\end{figure}

\begin{prop}[$H_1$ Comparison Theorem]
	\label{prop:normal_computation}
	Consider a pair of points $(x_1,y)$ and $(x_2,y)$ with $y > 0$
	and $x_1 \geq x_2$.  Let $v_1$ and $v_2$ be two unit vectors.
	If $v_1$ and $v_2$ are admissible with respect to $(x_1,y)$ and
	$(x_2,y)$, then
		$$|(x_1,y)| \geq |(x_2,y)|.$$
	Additionally,
		$$v_1^\perp \cdot N(x_1, y) \geq v_2^\perp \cdot N(x_2, y)$$
	with equality if and only if $C_1$ is centered at the origin and
	$C_1 = C_2$.
	Note that here $\perp$ means the perpendicular unit vector formed by a clockwise
	rotation by $\frac{\pi}{2}$ radians.
\end{prop}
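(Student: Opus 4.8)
The plan is to turn the statement into a short one‑variable calculus problem. First I would unwind the definitions of the canonical circles. Since $C_1$ passes through $(x_1,y)$ with centre $(a_1,0)$, it has radius $r_1=\sqrt{(x_1-a_1)^2+y^2}$ and its unit tangent at $(x_1,y)$ is $\pm\tfrac1{r_1}(-y,\,x_1-a_1)$; requiring this to equal $v_1$, which lies strictly in the second quadrant, forces $v_1=\tfrac1{r_1}(-y,\,x_1-a_1)$ and in particular $x_1>a_1$. Applying the clockwise quarter turn gives $v_1^{\perp}=\tfrac1{r_1}(x_1-a_1,\,y)$, the outward radial unit vector of $C_1$ at $(x_1,y)$, so
\[
v_1^{\perp}\cdot N(x_1,y)=\frac{x_1^2-a_1x_1+y^2}{\sqrt{x_1^2+y^2}\,\sqrt{(x_1-a_1)^2+y^2}}.
\]
The identical computation under ``$v_2$ strictly in the third quadrant'' yields $v_2=\tfrac1{r_2}(-y,\,x_2-a_2)$, $x_2<a_2$, and the corresponding formula for $v_2^{\perp}\cdot N(x_2,y)$. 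The inequality $|(x_1,y)|\ge|(x_2,y)|$ is then immediate: $x_1-a_1\ge a_1-x_2$ together with $a_1\ge0$ gives $x_1\ge-x_2$, which with $x_1\ge x_2$ forces $x_1\ge|x_2|$, hence $x_1^2+y^2\ge x_2^2+y^2$.

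Set $\psi(x,a)=\dfrac{x^2-ax+y^2}{\sqrt{x^2+y^2}\,\sqrt{(x-a)^2+y^2}}$ for the fixed $y>0$, so the main claim reads $\psi(x_1,a_1)\ge\psi(x_2,a_2)$. I would prove this by interpolating through $\psi(x_2,a_1)$, establishing $\psi(x_1,a_1)\ge\psi(x_2,a_1)\ge\psi(x_2,a_2)$. For the second leg, a direct differentiation shows $\partial_a\psi(x,a)$ has the sign of $-a$, so $\psi(x_2,\cdot)$ is strictly decreasing on $[0,\infty)$; and $0\le a_1\le a_2$, where $a_1\le a_2$ comes from the hitherto‑unused hypothesis $r_2\ge r_1$, which reads $a_2-x_2\ge x_1-a_1$ and hence $\ge a_1-x_2$. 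For the first leg, completing the square shows that $\psi(x,a)$ depends on $x$ only through $(x-\tfrac a2)^2$ and is nondecreasing in it (its $(x-\tfrac a2)^2$‑derivative equals $a^2y^2$ times a positive function). Thus the first leg is equivalent to $(x_1-\tfrac{a_1}2)^2\ge(x_2-\tfrac{a_1}2)^2$, and since $x_1-\tfrac{a_1}2>\tfrac{a_1}2\ge0$ it suffices to sandwich $x_2-\tfrac{a_1}2$ between $\pm(x_1-\tfrac{a_1}2)$; the two halves of this are exactly $x_2\le x_1$ and $x_2\ge2a_1-x_1$ (using $a_1\ge0$).

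For the equality clause, note that equality in the Proposition forces equality in both legs. Strict monotonicity in the centre variable then forces $a_1=a_2$. If $a_1>0$, equality in the first leg forces $(x_1-\tfrac{a_1}2)^2=(x_2-\tfrac{a_1}2)^2$; the root $x_2=a_1-x_1$ contradicts $x_2\ge2a_1-x_1$, so $x_2=x_1$, whence $C_1=C_2$ and $v_1,v_2$ would be the two unit tangents of a single circle at a single point, so $v_1=\pm v_2$ — impossible for one second‑quadrant and one third‑quadrant vector. Hence $a_1=a_2=0$, so $C_1$ and $C_2$ are both centred at the origin, giving $r_1=\sqrt{x_1^2+y^2}\ge\sqrt{x_2^2+y^2}=r_2$; combined with the admissibility requirement $r_2\ge r_1$ this forces $r_1=r_2$, i.e. $C_1=C_2$. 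Conversely, if $C_1=C_2$ is centred at the origin then $v_i^{\perp}=N(x_i,y)$ for $i=1,2$ and both sides equal $1$.

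The computations behind the two monotonicity claims are routine; the substantive choices are the interpolation point $(x_2,y)$ with centre $a_1$ — which makes each leg vary monotonically in a single parameter — and the recognition that the hypothesis $r_2\ge r_1$, unused in the inequality itself, is precisely what collapses the a priori equality family $\{C_1,C_2\ \text{both centred at }0\}$ down to $C_1=C_2$. I expect the equality analysis, in particular ruling out the branch $a_1>0$ via the strict quadrant hypotheses, to be the fiddliest part.
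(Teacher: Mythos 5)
Your proof is correct, and it takes a genuinely different route from the paper's. The paper works with angles: it introduces the ``symmetric'' reference configuration $(x_1^\star,y)$, $(x_2^\star,y)=(-x_1^\star,y)$ with $v_2^\star$ the reflection of $v_1$, writes the actual data as a perturbation by three nonnegative parameters $(c,d,\phi)$, expresses each angle $\theta_i=\theta(v_i^\perp)-\theta(N(x_i,y))$ as $\phi$ plus an integral of $y/(t^2+y^2)$, and compares the integrals termwise; strictness comes from at least one of $c,d,\phi$ being positive. You instead compute the two dot products in closed form as $\psi(x_i,a_i)$ with $\psi(x,a)=\bigl(x(x-a)+y^2\bigr)/\bigl(\sqrt{x^2+y^2}\sqrt{(x-a)^2+y^2}\bigr)$, and interpolate through $\psi(x_2,a_1)$, reducing everything to two one\mbox{-}parameter monotonicity facts: $\partial_a\psi$ has the sign of $-a$ (with $a_1\le a_2$ extracted from $r_2\ge r_1$, which the paper uses for the same purpose via $d\ge 0$), and $\psi$ is increasing in $(x-\tfrac a2)^2$ because it equals $w/\sqrt{w^2+a^2y^2}$ with $w=x(x-a)+y^2$. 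Your equality analysis is also complete: $a_1=a_2$ from strict monotonicity in the center, the branch $a_1>0$ ruled out by the quadrant hypotheses (indeed $x_1=x_2$ with $a_1=a_2$ already contradicts $x_2<a_2=a_1<x_1$), and then $r_1=r_2$ from the two opposite radius inequalities. The paper's version makes the geometric mechanism (angles accumulate faster near the origin) visible; yours is more elementary and self-contained, trading that picture for explicit formulas and a cleaner strictness discussion. Either proof is acceptable.
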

\begin{proof}
	To prove that $|(x_1,y)| \geq |(x_2,y)|$, we need only show that
	$|x_1| \geq |x_2|$.  This fact follows immediately from the property that
	$x_1 \geq x_2$, that
	the center of $C_1$ has a non-negative $e_1$-coordinate $a_1$, and that
	$x_1 - a_1 \geq a_1  - x_2$, which are all part of the assumption that
	$v_1$ and $v_2$ are admissible with respect to $(x_1,y)$ and $(x_2,y)$.

	Let $\theta(w)$ denote the counterclockwise angle of the 
	nonzero vector $w$, $\theta: S^1 \rightarrow [0, 2 \pi)$.
	We then have that
		$$v_1^\perp \cdot N(x_1,y) = \cos(\theta(v_1^\perp) - \theta(N(x_1,y)))$$
	and
		$$v_2^\perp \cdot N(x_2,y) = \cos(\theta(v_2^\perp) - \theta(N(x_2,y))).$$

	Let $\theta_1 = \theta(v_1^\perp) - \theta(N(x_1,y))$ and $\theta_2 = \theta(v_2^\perp) - \theta(N(x_2,y))$.  We must show that
		$$\cos(\theta_1) \geq \cos(\theta_2)$$
	with equality if and only if $C_1 = C_2$ and $C_1$ is a centered circle.

	We first observe that there is an $x_1^\star$ such that, if $C_1^\star$ is the canonical
	circle with respect to $v_1$ attached to $(x_1^\star, y)$, then $C_1^\star$ is centered.
	Furthermore, let $x_2^\star = - x_1^\star$, and let $v_2^\star$ be $v_1$ reflected through the $x$-axis.  We then have that
	$v_1$ and $v_2^\star$ are an admissible pair with respect to $(x_1^\star,y)$ and $(x_2^\star,y)$.
	We now have that $C_1 = C_2$ and $C_1$ is centered if and only if $v_2 = v_2^\star$, $x_1 = x_1^\star$, and
	$x_2 = x_2^\star$.  If these conditions are met, then clearly $\theta_1 = \theta_2 = 0$, and so we have equality.
	We will now show that, if at least one of these conditions is not met, then $\cos(\theta_1) > \cos(\theta_2)$, completing
	the proof.

	As a result of the hypotheses, we have that there is some $c,d \geq 0$ and
	$0 \leq \phi \leq \frac{\pi}{2}$ such that $\theta(v_2^\perp) = \theta({v_2^\star}^\perp) + \phi$, $x_1 = x_1^\star + c$, and
	$x_2 = x_2^\star + c + d$, as in Figure \ref*{fig:normal_computation}.  All of the requirements on $x_1, x_2$ and $v_2$ are met if and only if $c,d,\phi = 0$.
	Let us also note that, for a vector $(x,y)$ with $y > 0$, $\theta(N(x,y)) = \arccot(\frac{x}{y})$ and so
		$$\frac{\partial \theta(N(x,y))}{\partial x} = - \frac{y}{x^2 + y^2}.$$
	Denote $\frac{\partial \theta(N(x,y))}{\partial x}$ as $\theta'(N(x,y))$.

	We now see that, since $\theta(N(x_1^\star,y)) = \theta(v_1^\perp)$ and $\theta(N(x_2^\star,y)) + \phi = \theta(v_2^\perp)$,
	\begin{align*}
		\theta_1 &= \theta(N(x_1^\star,y)) - \theta(N(x_1,y)) \\
			 &= -\int_{x_1^\star}^{x_1^\star + c} \theta'(N(t,y)) dt \\
			 &= \int_{x_1^\star}^{x_1^\star + c} \frac{y}{t^2 + y^2} dt
	\end{align*}

	and

	\begin{align*}
		\theta_2 &= \phi + \theta(N(x_2^\star,y)) - \theta(N(x_2,y)) \\
			 &= \phi -  \int_{x_2^\star}^{x_2^\star + c} \theta'(N(t,y)) dt - \int_{x_2^\star + c}^{x_2^\star + c + d} \theta'(N(t,y)) dt \\
			 &= \phi + \int_{x_2^\star}^{x_2^\star + c} \frac{y}{t^2 + y^2} dt + \int_{x_2^\star + c}^{x_2^\star + c + d} \frac{y}{t^2 + y^2} dt.
	\end{align*}
	We first note that $0 \leq \theta_1, \theta_2 \leq \pi$, and so if we can show that
	$\theta_1 < \theta_2$ if at least one of
	$c,d$ or $\phi$ is positive, then we will have that $\cos(\theta_1) > \cos(\theta_2)$, the desired result.
		Due to the fact that $v_1$ lies strictly in the second quadrant,
	$x_1^\star > 0$ and $x_2^\star < 0$.  As such, for each $q \in [0,c]$,
		$$\frac{y}{(x_1^\star + q)^2 + y^2} \leq \frac{y}{(x_2^\star + q)^2 + y^2}$$
	with equality only at $q = 0$.  Hence, if $c > 0$, then
		$$ \int_{x_1^\star}^{x_1^\star + c} \frac{y}{t^2 + y^2} dt < \int_{x_2^\star}^{x_2^\star + c} \frac{y}{t^2 + y^2} dt.$$
	Since $\int_{x_2^\star + c}^{x_2^\star + c + d} \frac{y}{t^2 + y^2} dt \geq 0$ with equality if and only if $d = 0$
	and $\phi \geq 0$ with equality if any only if $\phi = 0$, this completes the proof.
\end{proof}

\begin{figure}[ht]
  \caption{$v_1$, $v_2^\star$, $x_1^\star$ and $x_2^\star$ compared with $v_1$, $v_2$, $x_1$ and $x_2$.}
  \centering
    \includegraphics[width=1.00\textwidth]{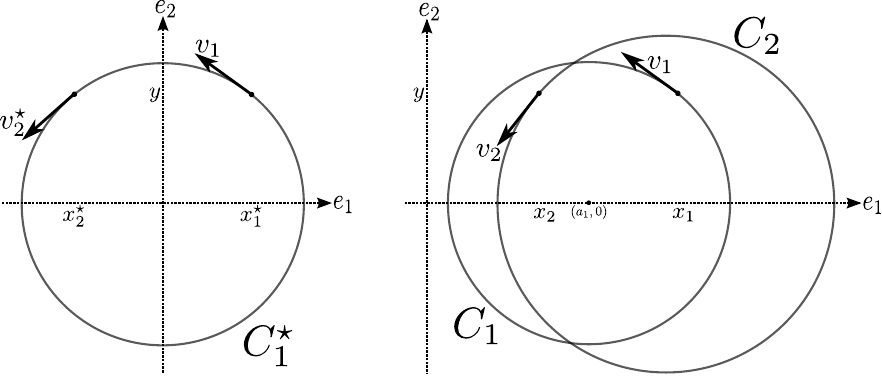}
  \label{fig:normal_computation}
\end{figure}

%%%%%%%%%%%%%%%%%%%%%%%%%%%
\subsection{Upper Curve}
%%%%%%%%%%%%%%%%%%%%%%%%%%%

Here we rigorously define the upper curve, and then use results from the previous section to describe
its behavior.  One should refer to Figure \ref*{fig:upper_lower_curve}.

\begin{defn}
	\label{defn:upper_curve}
	Let the set $K \subset [0, \beta)$ be defined as follows.  A point $x$ is in $K$ if and only if,
	for all $y \in [0, x]$, the following properties are satisfied:
	\begin{enumerate}
		\item $\gamma'(y)$ lies in the second quadrant.
		\item $\kappa(y) \geq \kappa(C_y) > 0$.
		\item $F$ is smooth at $y$ and $F'(y) \geq 0$ (where $F$ is defined as in Definition \ref*{defn:function_F}).
	\end{enumerate}
	Let $\delta = \sup K$.
\end{defn}

We prove a short lemma which will provide sufficient criteria to conclude that $F$ is smooth on a neighborhood of a point, and $F'(x) \geq 0$.

\begin{lem}
	\label{lem:F_and_R}
	If $x \in (0, \beta)$ and $\kappa(x) \geq \kappa(C_x) > 0$,
	then $F$ is smooth at $x$, and $F'(x) \geq 0$.
\end{lem}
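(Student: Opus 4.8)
The claim I want to prove is Lemma~\ref*{lem:F_and_R}: if $x \in (0,\beta)$ with $\kappa(x) \geq \kappa(C_x) > 0$, then $F$ is smooth at $x$ and $F'(x) \geq 0$. Here $F(x)$ is the $e_1$-coordinate of the center of the canonical circle $C_x$, so I first want an explicit formula for $F$ in terms of $\gamma$. Write $\gamma(x) = (\gamma_1(x), \gamma_2(x))$ and $\gamma'(x) = (\gamma_1'(x), \gamma_2'(x))$, a unit vector. The center of $C_x$ lies on the $e_1$ axis and on the normal line to $\gamma$ at $\gamma(x)$; parametrizing that normal line as $\gamma(x) + t\, \gamma'(x)^\perp$ and setting the second coordinate to zero gives $F(x) = \gamma_1(x) - \gamma_2(x)\,\dfrac{\gamma_1'(x)}{\gamma_2'(x)}$, valid wherever $\gamma_2'(x) \neq 0$, i.e. wherever $\gamma'(x) \neq (0,\pm 1)$. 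The hypothesis $\kappa(C_x) > 0$ rules out the vertical-line case (the canonical circle has finite positive curvature), and it also forces $\gamma'(x)$ to lie strictly in the second quadrant in the regime we care about, so $\gamma_2'(x) > 0$ on a neighborhood; hence $F$ is smooth at $x$ as a quotient of smooth functions with nonvanishing denominator. That disposes of the smoothness assertion.

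**The sign of $F'(x)$.**

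For the inequality $F'(x) \geq 0$, the plan is to differentiate the formula above, using the Frenet relations for a unit-speed planar curve. With the inward normal convention matching the sign of $\kappa$, one has $\gamma'' = \kappa\, n$ where $n$ is a choice of unit normal; writing everything in terms of $\kappa$, the curvature $\lambda(x) = \kappa(C_x)$ of the canonical circle, and the geometry of the configuration, I expect $F'(x)$ to simplify to an expression whose sign is controlled by $\kappa(x) - \kappa(C_x)$ together with a positive geometric factor. The cleanest route is probably to compare $\gamma$ near $x$ with its osculating circle $A_x$ (the circle tangent to $\gamma$ at $x$ with signed curvature $\kappa(x)$), exactly the device introduced before Definition~\ref*{defn:function_F}: $A_x$ agrees with $\gamma$ to second order at $x$, so $F'(x)$ — which depends only on $\gamma(x)$, $\gamma'(x)$, and $\gamma''(x) = \kappa(x)$ — equals $\widetilde F'(\widetilde x)$, the analogous derivative computed on $A_x$. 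On a circle one can compute $\widetilde F'$ by elementary trigonometry: as the point moves along $A_x$ in the direction of increasing parameter, the center of the canonical circle slides along the $e_1$ axis, and the rate of that slide has a sign determined by whether the circle $A_x$ curves "toward" the axis faster or slower than a centered circle through the same point would — which is precisely the comparison $\kappa(x)$ versus $\kappa(C_x)$. I would package this computation the way the paper packages its other such computations, deferring the bare trigonometric identity to the appendix (``see Appendix~1'') and only recording here that the sign works out to be $\geq 0$ under the stated hypotheses, with the relevant geometric factor being positive because $\gamma'(x)$ lies in the second quadrant and $\gamma_2(x) > 0$.

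**Main obstacle.**

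The genuine content, and the step most likely to need care, is verifying that the sign of $F'(x)$ (equivalently $\widetilde F'(\widetilde x)$ on the osculating circle) is nonnegative and not just controlled in magnitude: one has to make sure the "positive geometric factor" really is positive in every admissible configuration, which uses that $x \in (0,\beta)$ forces $\gamma_2(x) > 0$, that $\kappa(C_x) > 0$ keeps the canonical circle from degenerating, and — implicitly — that the center of $C_x$ has nonnegative $e_1$-coordinate (Lemma~\ref*{lem:positive_e_1}), so that the osculating circle does not "wrap around" the axis in a way that flips the sign. Once those geometric facts are in hand the computation is routine. I would also note in passing that this lemma is exactly what makes condition~(3) in Definition~\ref*{defn:upper_curve} a consequence of conditions~(1)--(2), so that the set $K$ defining the upper curve is cut out by curvature inequalities alone.

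\begin{proof}
Write $\gamma(x) = (\gamma_1(x), \gamma_2(x))$. Since $x \in (0,\beta)$ we have $\gamma_2(x) > 0$, and since $\kappa(C_x) > 0$ the canonical circle $C_x$ is not a vertical line, so $\gamma'(x) \neq (0, \pm 1)$ and $\gamma_2'(x) \neq 0$. The center of $C_x$ is the intersection of the $e_1$ axis with the line through $\gamma(x)$ normal to $\gamma$; solving for the $e_1$-coordinate gives
	$$F(x) = \gamma_1(x) - \gamma_2(x)\,\frac{\gamma_1'(x)}{\gamma_2'(x)},$$
which is a smooth function of $x$ on a neighborhood of $x$ since $\gamma$ is smooth there and $\gamma_2'$ does not vanish. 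This proves that $F$ is smooth at $x$.

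For the inequality, let $A_x$ be the oriented circle tangent to $\gamma$ at $x$ with signed curvature $\kappa(x)$, so that $A_x$ approximates $\gamma$ up to the third order at $x$; in particular $\gamma(x)$, $\gamma'(x)$ and $\gamma''(x)$ agree with the corresponding data of $A_x$. Since the formula above expresses $F'(x)$ in terms of $\gamma(x)$, $\gamma'(x)$ and $\gamma''(x) = \kappa(x)\, n(x)$ only, we have $F'(x) = \widetilde F'(\widetilde x)$, where $\widetilde F$ is computed along $A_x$ as in Definition~\ref*{defn:function_F}. A direct computation along the circle $A_x$ (see Appendix~1, Lemma~\ref*{lem:inside_ball_details}), using that $\gamma'(x)$ lies in the second quadrant, that $\gamma_2(x) > 0$, that $\kappa(C_x) > 0$, and that the $e_1$-coordinate of the center of $C_x$ is nonnegative by Lemma~\ref*{lem:positive_e_1}, shows that
	$$F'(x) = \widetilde F'(\widetilde x) \geq 0,$$
with the sign governed by the inequality $\kappa(x) \geq \kappa(C_x)$. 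This completes the proof.
\end{proof}
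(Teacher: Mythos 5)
Your overall strategy is the paper's: establish smoothness from non-degeneracy of the configuration, pass to the osculating circle $A_x$ via third-order contact so that $F'(x)=\widetilde F'(\widetilde x)$, and defer the sign computation to Lemma~\ref*{lem:inside_ball_details}. However, the explicit formula you use for the smoothness step is wrong. Intersecting the normal line $\gamma(x)+t\,\gamma'(x)^\perp$ with the $e_1$-axis gives
$$F(x)=\gamma_1(x)+\gamma_2(x)\,\frac{\gamma_2'(x)}{\gamma_1'(x)},$$
not $\gamma_1(x)-\gamma_2(x)\,\gamma_1'(x)/\gamma_2'(x)$; you have swapped the roles of $\gamma_1'$ and $\gamma_2'$. (Check against the appendix: for $\alpha(x)=(a+r\cos(x/r),\,b+r\sin(x/r))$ the correct formula returns $a-b\cot(x/r)$, matching Lemma~\ref*{lem:inside_ball_details}, while yours does not; also, a horizontal tangent gives $F(x)=\gamma_1(x)$, which your formula cannot produce.) This matters because the non-vanishing denominator is then $\gamma_1'(x)$, i.e.\ the tangent must not be vertical --- exactly what $\kappa(C_x)>0$ guarantees. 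Your justification instead requires $\gamma_2'(x)\neq 0$ and supports it by asserting that $\gamma'(x)$ lies strictly in the second quadrant ``in the regime we care about,'' which is not a hypothesis of the lemma (and would genuinely fail at a point with $\gamma'(x)=(-1,0)$, where $F$ is perfectly smooth). With the corrected formula the smoothness claim follows exactly as the paper intends.

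Two smaller points on the sign computation. The appendix yields $F'=\frac{b}{r}\sin^{-2}(x/r)$ where $(a,b)$ is the center of $A_x$, so the sign is controlled entirely by $b\geq 0$; your appeal to Lemma~\ref*{lem:positive_e_1} (nonnegativity of $a$) is unnecessary, and that lemma's hypothesis ($\gamma'$ in the second quadrant) is again not available here. The step actually doing the work --- that $\kappa(x)\geq\kappa(C_x)>0$ together with $\gamma_2(x)>0$ forces $b=\gamma_2(x)\bigl(1-\kappa(C_x)/\kappa(x)\bigr)\geq 0$ --- is left implicit in your write-up, though to be fair the paper's own proof is equally terse on this point.
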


\begin{proof}
	As mentioned previously, $F$ exists and is smooth at $x$ since $\gamma_2(x) > 0$, and $\gamma'(x) \neq (0, \pm 1)$ (since $\kappa(C_x) > 0$).
		Using notation from the previous section, $A_x$ locally approximates $\gamma$ at $x$
	up to the third order.  Our first
	observation is that $\widetilde{F}'(\widetilde{x}) = F'(x)$.  This is because $\widetilde{F}$ approximates $F$ near $x$ up to the second order, which is a direct consequence
	of the fact that $A_x$ approximates $\gamma$ at $x$ up to the third order.
		Now we must just show that $\widetilde{F}'(\widetilde{x}) \leq 0$.
	This is the result of a straightforward computation along with the assumptions in the statement of the lemma (see Appendix 1, Lemma \ref*{lem:inside_ball_details}).
\end{proof}

We can now prove the key properties of the upper curve.  We start by demonstrating that $\delta > 0$.
\begin{lem}
	\label{lem:delta_not_0}
	We have that $\delta > 0$.
\end{lem}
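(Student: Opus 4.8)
The plan is to show that the point $x = 0$ lies in the closure of the set $K$ in a way that forces $\sup K > 0$; more precisely, I will verify that all three defining conditions of $K$ hold at $0$ and persist on a small interval $[0, \varepsilon)$. The key observation is that $\gamma(0) = x^*$ is the point of maximal magnitude, that $|\gamma(0)| > \mathcal{R}(f)$, and that $\gamma$ is regular and symmetric about $0$, so we have good fourth-order control of $\gamma$ near $0$ via its canonical circle $C_0 = A_0$.

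First I would check condition (2) at $0$: we need $\kappa(0) \geq \kappa(C_0) > 0$. By definition of the canonical circle, $\kappa(C_0) = \kappa(0)$, so the inequality $\kappa(0) \geq \kappa(C_0)$ is automatic (with equality). For strict positivity, recall that $H_f = c$ at all regular points and that $\partial A$ is mean curvature convex (Theorem \ref*{thm:regularity}), so $H_0(0) > 0$; combined with Proposition \ref*{prop:mean_curvature}, $H_0(0) = \kappa(0) + (n-2)\kappa(C_0) = (n-1)\kappa(0)$, which forces $\kappa(0) > 0$. Next, condition (1): since $\gamma$ is a counterclockwise parametrization leaving $x^*$ with $\gamma'(0) = (0,-1)$ pointing... actually, $\gamma'(0)$ is a multiple of $e_2$; tracking orientation (property 7, counterclockwise, with $\gamma_2 > 0$ on $(0,\beta)$) gives $\gamma'(0) = (0,1)$. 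This is on the boundary between the first and second quadrants, not strictly inside the second. Here I must use that $\kappa(0) > 0$ together with $C_0$ not being... wait — if $\gamma$ is a centered circle we are in the trivial case of the First Tangent Lemma and done separately, so assume $\gamma$ is not a centered circle; then by Lemma \ref*{lem:second_derivative_kappa}, $\kappa''(0) > 0$, and since $\kappa'(0) = 0$ (symmetry), $\kappa$ is increasing just to the right of $0$. The curvature $\kappa(0) > 0$ with $\gamma'(0) = (0,1)$ means $\gamma$ turns counterclockwise, so $\gamma'(x)$ immediately enters the (open) second quadrant for small $x > 0$; by continuity this persists on some $(0, \varepsilon_1)$. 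Similarly $\kappa(x) > 0$ and $\kappa(C_x) > 0$ persist on a neighborhood of $0$ by continuity (both are continuous in $x$ on $[0,\beta)$), so condition (2) holds on $[0,\varepsilon_2)$.

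For condition (3), I would invoke Lemma \ref*{lem:F_and_R}: at any $x \in (0,\beta)$ with $\kappa(x) \geq \kappa(C_x) > 0$ — which we have just established for $x \in (0, \min(\varepsilon_1,\varepsilon_2))$ — the function $F$ is smooth at $x$ with $F'(x) \geq 0$. At $x = 0$ itself, $F$ is smooth (as noted after Definition \ref*{defn:function_F}), and $F'(0) \geq 0$ follows by taking the limit $x \to 0^+$ of $F'(x) \geq 0$ using continuity of $F'$ at $0$. Thus all three conditions hold on $[0, \varepsilon)$ for $\varepsilon = \min(\varepsilon_1, \varepsilon_2) > 0$, which means $[0, \varepsilon) \subseteq K$ and hence $\delta = \sup K \geq \varepsilon > 0$.

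The main obstacle I anticipate is the orientation/quadrant bookkeeping for $\gamma'(x)$ near $0$: establishing that $\gamma'$ enters the \emph{open} second quadrant (condition (1) is an open condition, "strictly in the second quadrant") rather than staying on the positive $e_2$-axis. This requires combining the sign of the curvature $\kappa(0) > 0$ with the correct counterclockwise orientation convention (property 7) and with the fact that $\gamma$ is not a centered circle — so that the curve genuinely bends away from the vertical. Everything else reduces to continuity arguments and direct appeals to Lemmas \ref*{lem:F_and_R} and \ref*{lem:second_derivative_kappa} and Proposition \ref*{prop:mean_curvature}, which are already in hand.
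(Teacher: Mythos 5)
Your overall strategy is the same as the paper's: verify each of the three defining conditions of $K$ on a small initial interval $[0,\rho]$ and conclude $\delta \geq \rho > 0$. Your derivation of $\kappa(0)>0$ from mean curvature convexity together with Proposition \ref*{prop:mean_curvature} is a valid (mildly different) route from the paper's, which instead observes that $\gamma$ lies inside the centered circle of radius $|\gamma(0)|$ and is tangent to it at $\gamma(0)$. Your treatment of condition (1) (tangent angle increases from $\pi/2$ because $\kappa>0$ near $0$) and of condition (3) (via Lemma \ref*{lem:F_and_R} and $F'(0)=0$ by symmetry) matches the paper.

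There is, however, a genuine gap in your verification of condition (2). You write that ``$\kappa(x)>0$ and $\kappa(C_x)>0$ persist on a neighborhood of $0$ by continuity \dots\ so condition (2) holds,'' but condition (2) also demands the inequality $\kappa(y) \geq \kappa(C_y)$, and at $y=0$ this holds with \emph{equality}. Continuity cannot upgrade an equality at the endpoint to an inequality on an interval: a priori $\kappa(C_x)$ could increase faster than $\kappa(x)$ as $x$ moves away from $0$, in which case $0$ would be an isolated point of $K$ and the argument would fail. The missing ingredient is control of $\lambda(x)=\kappa(C_x)$ near $0$: since $\kappa'(0)=0$ by symmetry, $A_0=C_0$ approximates $\gamma$ to fourth order at $0$, so $\lambda$ is constant up to third order there, i.e.\ $\lambda'(0)=\lambda''(0)=0$. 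Combined with $\kappa''(0)>0$ from Lemma \ref*{lem:second_derivative_kappa} --- which you cite, but deploy only toward the quadrant condition --- this gives $\kappa(x)-\kappa(C_x)=\tfrac12\kappa''(0)x^2+O(x^3)>0$ for small $x>0$, which is what condition (2) actually requires. With that comparison inserted, your proof closes and coincides with the paper's.
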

\begin{proof}
		To prove that there exists a $\rho > 0$ so that $[0, \rho] \subset K$, we must produce $\rho_1, \rho_2,
	\rho_3 > 0$ so that $\gamma'(y)$ lies in the second quadrant for $y \in [0, \rho_1]$, $\kappa(y) \geq \kappa(C_y) > 0$
	for $y \in [0, \rho_2]$, and $F'(y) \geq 0$ for $y \in [0, \rho_3]$.  We will then
	choose $\rho = \min(\rho_1, \rho_2, \rho_3) > 0$.
		Since $\gamma$ lies inside the centered ball of radius $|\gamma(0)| < \infty$, $\kappa(0) > 0$.  Because $\gamma$
	is smooth on $[0, \beta)$, $\kappa$ is continuous on this interval, and so there is some $\epsilon > 0$ so that
	$\kappa(y) > 0$ on $[0, \epsilon]$.  Furthermore, since $\gamma$ is symmetric about the $e_1$ axis, $\gamma'(0) = (0,1)$.
	Combining these results yields the fact that there is some $\rho_1 > 0$ so that $\gamma'(y)$ is in the second quadrant
	on $[0, \rho_1]$.

	As a result of the fact that $\gamma$ is symmetric about the $e_1$ axis and the definition of the canonical circle, we have that
	$\kappa(0) = \kappa(C_0) > 0$.  Hence, referring to the discussion pertaining to approximating circles in the previous section,
	$A_0 = C_0$ and so $A_0$ approximates $\gamma$ up to the fourth order near $0$ (since $\kappa'(0) = 0$).
	Using the notation $\lambda(x)$ as before, we see that $\lambda(x)$ is constant up to the third order near $0$.  However, by Lemma \ref*{lem:second_derivative_kappa},
	$\kappa''(0) > 0$, and so there is some $\rho_2 > 0$ so that, on $[0,\rho_2]$, $\kappa(x) \geq \kappa(C_x)$.  We may also assume
	that $\kappa(C_x) > 0$ on this interval because $\kappa(C_0) > 0$ and $\kappa$ is continuous on $[0, \beta)$.

		Lastly, we clearly have that $F'(0) = 0$, since $F$ exists and is smooth at $0$, and since $\gamma$ is symmetric about the $e_1$ axis.
	From the previous paragraph, we can find a $\rho_3 > 0$ such that, for $y \in (0, \rho_3]$, $\gamma'(y)$ lies strictly in the second quadrant,
	and $\kappa(y) \geq \kappa(C_y) > 0$.  By Lemma \ref*{lem:F_and_R}, we thus have that $F'(y) \geq 0$ on $(0, \rho_3]$, and so
	this choice of $\rho_3$ is satisfactory.
\end{proof}

We now prove the critical properties of the upper curve.
\begin{prop}[Structure of Upper Curve]
	\label{prop:structure}
	$K$ is not empty, and so $\delta$ exists.  Furthermore, if $\gamma$ is not a centered 
	circle, then $\delta$ has the following properties:
	\begin{enumerate}
		\item $\delta < \beta$
		\item $\delta \in K$
		\item $\gamma_1(\delta) \geq F(x)$ for any $x \in [0, \delta]$.
		\item $\gamma_1(\delta) > 0$
		\item $\gamma'(\delta) = (-1,0)$
	\end{enumerate}
\end{prop}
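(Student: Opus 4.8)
The plan is to exploit that $K$ is downward closed: if $x\in K$ and $0\le y\le x$ then $y\in K$, so $[0,\delta)\subseteq K$ and conditions (1)--(3) of Definition~\ref*{defn:upper_curve} hold on all of $[0,\delta)$. Nonemptiness of $K$, hence existence of $\delta=\sup K$, is Lemma~\ref*{lem:delta_not_0}; for the rest assume $\gamma$ is not a centered circle. The key auxiliary object is the tangent angle $\phi(y)$, defined by $\gamma'(y)=(\cos\phi(y),\sin\phi(y))$ with $\phi(0)=\pi/2$: it satisfies $\phi'=\kappa$ and, since $\kappa(y)\ge\kappa(C_y)>0$ on $[0,\delta)$, it is strictly increasing there. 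For (1): if $\delta=\beta$, then $\gamma'$ lies in the second quadrant on all of $[0,\beta)$, so $\gamma_2$ is nondecreasing there, contradicting $\gamma_2(0)=\gamma_2(\beta)=0$ and $\gamma_2>0$ on $(0,\beta)$; hence $\delta<\beta$, and with $\delta>0$ this gives $\delta\in(0,\beta)$. For (2): $\gamma'$ and $\kappa$ are continuous on $[0,\beta)$ and $\lambda=\kappa(C_\cdot)$ is smooth on $(-\beta,\beta)$, so passing to the limit at $\delta$ gives $\gamma'(\delta)$ in the closed second quadrant, $\kappa(\delta)\ge\lambda(\delta)$, and $\lambda(\delta)\ge0$; strict monotonicity of $\phi$ yields $\phi(\delta)\in(\pi/2,\pi]$, so $\gamma'(\delta)\ne(0,\pm1)$, whence $C_\delta$ is a genuine circle and $\lambda(\delta)\ne0$, forcing $\lambda(\delta)>0$. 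Lemma~\ref*{lem:F_and_R} then shows $F$ is smooth at $\delta$ with $F'(\delta)\ge0$, so $\delta\in K$.

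For (3) and (4): since $F'\ge0$ on $[0,\delta]$, $F$ is nondecreasing, so $F(x)\le F(\delta)$ for all $x\in[0,\delta]$. As $\gamma'(\delta)$ lies in the closed second quadrant with $\cos\phi(\delta)<0$ and $\gamma_2(\delta)>0$, perpendicularity of $\gamma(\delta)-(F(\delta),0)$ to $\gamma'(\delta)$ gives $\gamma_1(\delta)-F(\delta)=\gamma_2(\delta)\sin\phi(\delta)/|\cos\phi(\delta)|\ge0$, hence $\gamma_1(\delta)\ge F(\delta)\ge F(x)$ for all $x\in[0,\delta]$, which is (3). For (4), if $\gamma_1(\delta)\le0$ then $F(x)\le0$ on $[0,\delta]$, while Lemma~\ref*{lem:positive_e_1} gives $F(x)\ge0$ there; so $F\equiv0$ on $[0,\delta]$, in particular the center of $C_0$ is the origin, and since $\gamma$ is symmetric about the $e_1$-axis, $\kappa(0)=\kappa(C_0)$, so Lemma~\ref*{lem:C_x_equality} makes $\gamma$ a centered circle, a contradiction. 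Hence $\gamma_1(\delta)>0$.

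For (5), suppose $\gamma'(\delta)\ne(-1,0)$; then by the above $\gamma'(\delta)$ lies \emph{strictly} in the open second quadrant, i.e., $\phi(\delta)\in(\pi/2,\pi)$. I would derive a contradiction with $\delta=\sup K$ by showing that (1)--(3) of Definition~\ref*{defn:upper_curve} persist on $[\delta,\delta+\epsilon)$ for some $\epsilon>0$ with $\delta+\epsilon<\beta$. Property (1) follows by continuity of $\gamma'$; by Lemma~\ref*{lem:F_and_R}, property (3) follows once property (2) holds; and $\kappa(C_y)=\lambda(y)>0$ near $\delta$ by continuity. It remains to establish $\kappa(y)\ge\lambda(y)$ for $y$ just past $\delta$, where $\kappa(\delta)\ge\lambda(\delta)>0$. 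If this inequality is strict at $\delta$ it persists by continuity. If $\kappa(\delta)=\lambda(\delta)$, then $A_\delta=C_\delta$ (same base point, tangent, and signed curvature), so, $A_\delta$ approximating $\gamma$ to third order, $\lambda'(\delta)=\widetilde{\lambda}'(\widetilde{\delta})=0$ (the canonical circle is constant along $A_\delta$), while Lemma~\ref*{lem:curvature_bound} gives $\kappa'(\delta)\ge0$; if $\kappa'(\delta)>0$ then $(\kappa-\lambda)(\delta)=0<(\kappa-\lambda)'(\delta)$, so $\kappa>\lambda$ just past $\delta$.

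The remaining possibility, $\kappa'(\delta)=0$, is where I expect the real work, and is the main obstacle: it is the case where $\gamma$ enters the flat core $B_{\mathcal{R}(f)}$. By the strict part of Lemma~\ref*{lem:curvature_bound}, one of $\gamma'(\delta)=(0,1)$, the center of $C_\delta$ being the origin, or $\gamma(\delta)\in B_{\mathcal{R}(f)}$ must hold; the first is excluded since $\phi(\delta)>\pi/2$, and the second would force $\gamma$ to be a centered circle via Lemma~\ref*{lem:C_x_equality}. So $\gamma(\delta)\in B_{\mathcal{R}(f)}$, and since $|\gamma|$ is nonincreasing on $[0,\beta)$ the whole arc $\gamma|_{[\delta,\beta)}$ lies in $B_{\mathcal{R}(f)}$, where $g'\equiv0$; hence $H_1\equiv0$ and $\gamma$ satisfies the profile-curve equation $\kappa+(n-2)\kappa(C_\cdot)=H_0=c$ there, with $c=H_f>0$ by mean curvature convexity (Theorem~\ref*{thm:regularity}). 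Evaluating at $\delta$ gives $\lambda(\delta)=c/(n-1)$, so $C_\delta$ is the circle of radius $(n-1)/c$ centered on the $e_1$-axis through $\gamma(\delta)$ with tangent $\gamma'(\delta)$; but such a circle is itself a solution of this profile-curve equation (Lipschitz where $\gamma_2>0$) with exactly the initial data $\gamma(\delta),\gamma'(\delta)$ of $\gamma$, so uniqueness of solutions forces $\gamma=C_\delta$ on $[\delta,\delta+\epsilon)$. Along that circular arc $\kappa=\kappa(C_\cdot)=c/(n-1)>0$ and the center of each $C_y$ is constant, so (1)--(3) hold past $\delta$; this contradicts $\delta=\sup K$ and completes (5).
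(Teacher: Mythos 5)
Your proof is correct and follows essentially the same route as the paper: Lemma \ref*{lem:delta_not_0} for nonemptiness, monotonicity of $\gamma_2$ for $\delta<\beta$, continuity plus Lemma \ref*{lem:F_and_R} for $\delta\in K$, monotonicity of $F$ together with Lemmas \ref*{lem:positive_e_1} and \ref*{lem:C_x_equality} for properties (3) and (4), and for (5) the same three mechanisms (persistence of a strict inequality, Lemma \ref*{lem:curvature_bound} outside $B_{\mathcal{R}(f)}$, and ODE uniqueness against $C_\delta$ inside the flat core where $g'\equiv 0$). The only difference is cosmetic: you organize the final case split by the sign of $\kappa'(\delta)$ and recover $\gamma(\delta)\in B_{\mathcal{R}(f)}$ from the contrapositive of Lemma \ref*{lem:curvature_bound}, whereas the paper splits directly on whether $\gamma(\delta)$ lies in $B_{\mathcal{R}(f)}$.
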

\begin{proof}
		From Lemma \ref*{lem:delta_not_0}, we have that $\delta$ exists, and $\delta > 0$.  We will begin by proving the first four properties of
	$\delta$ described in the statement of the theorem.  Since $\gamma'(x)$ lies in the second quadrant for
	$x \in [0, \delta)$,
	and $\gamma'(x) \neq (-1,0)$ on a neighborhood of $0$, $\gamma_2(x)$ is a non-decreasing function on $[0, \delta)$ and increases on some interval.
	We thus have that $\lim_{x \rightarrow \delta^-} \gamma_2(x)$ exists and is positive, and so $\delta \neq \beta$.

		Since $\gamma'(0) = (0,1)$, the above paragraph implies that $\gamma$ is smooth on $[0, \delta]$.  Hence, all relevant quantities are continuous on this interval, and so the
	fact that $\delta \in K$ follows directly from this continuity, along with the fact that $\gamma'(\delta) \neq (0,1)$, and $\kappa > 0$ on $K$.
		The property that $F(\delta) \geq F(x)$ for all $x \in [0, \delta]$ is a direct result of the fact that
	$F'(x) \geq 0$ on $[0, \delta]$, and the fact that $F(\delta) \leq \gamma_1(\delta)$ (which is because $\gamma'(\delta)$ lies in the second quadrant).

		We next show that $\gamma_1(\delta) > 0$.  Since $\gamma$ is smooth at $\delta$, $\gamma'(\delta)$ is
	in the second quadrant.  Hence, $\gamma_1(\delta) \geq F(\delta)$.  As a result of the fact that
	$\kappa(0) = \kappa(C_0)$, Lemma \ref*{lem:positive_e_1} and the assumption that $\gamma$ is not a centered
	circle imply that $F(0) > 0$.  Since $F' \geq 0$ on $[0, \delta]$, $F(\delta) > 0$, completing the
	proof.

		The last item that we must prove is that $\gamma'(\delta) = (-1,0)$.  We will prove this by contradiction: if $\gamma'(\delta) \neq (-1,0)$,
	then there is some $\epsilon > 0$ so that $[\delta, \delta + \epsilon] \in K$.  We will break this proof
	down into two cases.  The first case is if $\kappa(\delta) > \kappa(C_\delta) > 0$, and the second case is if $\kappa(\delta) = \kappa(C_x) > 0$.  Since $\delta \in K$,
	we know that these two cases encompass all possibilities.  In both cases, since $\delta > 0$ and $\gamma'(\delta) \neq (-1,0)$, 
	$\gamma'(\delta)$ is strictly in the second quadrant. 
		Let us first assume that $\kappa(\delta) > \kappa(C_\delta) > 0$.  Due to the smoothness of $\gamma$, there exists an $\epsilon > 0$ such that
	$\gamma'$ is in the second quadrant on $[\delta, \delta + \epsilon]$.  Furthermore, since $\lambda(x) = \kappa(C_x)$ and $\kappa$ are both smooth, we may also assume that
	$\kappa(x) > \kappa(C_x) > 0$ on this interval as well.  Lastly, $F'(x) \geq 0$ on this interval as a direct result
	of Lemma \ref*{lem:F_and_R}.  Hence, $[\delta, \delta + \epsilon] \subset K$.

		Let us now assume that $\kappa(\delta) = \kappa(C_\delta) > 0$, and let us consider two sub-cases.  The first sub-case is if
	$\gamma(\delta) \not \in B_{\mathcal{R}(f)}$.  In this sub-case, Lemma \ref*{lem:curvature_bound} tells us that $\kappa'(\delta) > 0$.  We now have that $A_\delta = C_\delta$, and so $C_\delta$ approximates $\gamma$ near $\delta$ up to the third order.
	As such, we see that $\lambda'(\delta) = 0$, and so there exists an $\epsilon > 0$ such that $\kappa(x) \geq \kappa(C_x) > 0$ on
	$[\delta, \delta + \epsilon]$.  Since $\gamma'(\delta)$ is strictly in the second quadrant and $\kappa(x) > 0$ on this interval, we can choose
	$\epsilon$ so that $\gamma'$ is strictly in the second quadrant on $[\delta, \delta + \epsilon]$ as well.  
	The only other fact to check is that we can choose $\epsilon$ so that $F'(x) \geq 0$
on this interval, which is a consequence of
	Lemma \ref*{lem:F_and_R}.

		The remaining sub-case is if $\gamma(\delta) \in B_{\mathcal{R}(f)}$ and $\kappa(\delta) = \kappa(C_\delta) > 0$.  In this case we see that $\gamma$ and
	$C_\delta$ agree at the point $\gamma(\delta)$, and their tangents also agree at this point. As a result, for
	some $\epsilon > 0$, $\gamma = C_\delta$ on $[\delta, \delta + \epsilon]$.  This is because $g'(|x|) = 0$ on $B_{\mathcal{R}(f)}$, and so
	$C_\delta$ and $\gamma$ both satisfy the ODE $\kappa(x) + (n - 2) \kappa(C_x) = \kappa(\delta) + (n - 2) \kappa(C_\delta)$.
	We can apply standard theorems concerning the uniqueness of solutions from the theory of ODEs to
	obtain that, since $C_\delta$ and $\gamma$ are both arclength parametrizations,
	they must be equal on some neighborhood of $\delta$.  As such, we see that $F'(x) = 0$,
	$\kappa(x) = \kappa(\delta) = \kappa(C_\delta) = \kappa(C_x) > 0$, and $\gamma'$ lies in the second quadrant on $[\delta, \delta + \epsilon]$,
	as desired.

\end{proof}

%%%%%%%%%%%%%%%%%%%%%%%%
\subsection{Lower Curve}
%%%%%%%%%%%%%%%%%%%%%%%%

We first will require a definition of the \emph {lower curve}.  We will then compare the lower curve
to the upper curve to prove Lemma \ref*{lem:first_tangent}, as described in the introduction.  Again, it is useful to refer to Figure \ref*{fig:upper_lower_curve} throughout this section.

\begin{defn}
	\label{defn:lower_curve}
	Let $L \subset [\delta, \beta)$ defined as follows.  A point $x \in [\delta, \beta)$ is in $L$ if and only if,
	for every $y \in [\delta, x]$, we have that:
	\begin{enumerate}
		\item $\gamma'(y)$ is in the third quadrant
		\item If $\overline{y}$ is the unique point on the upper curve with $\gamma_2(\overline{y}) = \gamma_2(y)$, then
			$\kappa(\overline{y}) \leq \kappa(y)$.
	\end{enumerate}
	Now define
		$$\eta = \sup L.$$
	Since $\delta \in L$, $\eta$ exists.
\end{defn}

As with the upper curve, we first prove some properties about the lower curve.  To begin, note that
$\gamma_1(x) \leq \gamma_1(\delta)$, and $\gamma_2(x) \leq \gamma_2(\delta)$ for all $x \in L$,
since $\gamma'$ is in the third quadrant on $L$. 
Additionally, observe that, for every $z \in L$, there is a unique $\bar{z}$
in the upper curve such that $\gamma_2(\bar{z}) = \gamma_2(z)$.  This is a result of the fact that $\kappa > 0$ on
the upper curve, $\gamma'$ is in the second quadrant on the upper curve, and $0 \leq \gamma_2(z) \leq \gamma_2(\delta)$.  We make some preliminary conclusions:
since $\kappa(x) > 0$ and $\gamma'(x)$ in the third quadrant for every $x \in L$, if we choose any $z_1, z_2 \in L$,
then $\gamma_2(z_1) \neq \gamma_2(z_2)$.  These comments imply that, for a fixed $x \in L$, we may produce
functions $k$ and $h$ defined on $(\gamma_2(x), \gamma_2(\delta))$ such that $k(t)$ is the unique point $p_1$ in $L$
with $\gamma_2(p_1) = t$, and $h(t)$ is the unique point $p_2$ in $[0, \delta]$ such that
$\gamma_2(p_2) = t$.  Consider now the functions
	$$ Q = 2\gamma_1(\delta) - \gamma_1(h) $$
and
	$$ W = \gamma_1(k) $$
defined on $(\gamma_2(x), \gamma_2(\delta))$.   This is shown in Figure \ref*{fig:Q_and_W}.

\begin{figure}[ht]
  \caption{The functions $Q$ and $W$.}
  \centering
    \includegraphics[width=1.00\textwidth]{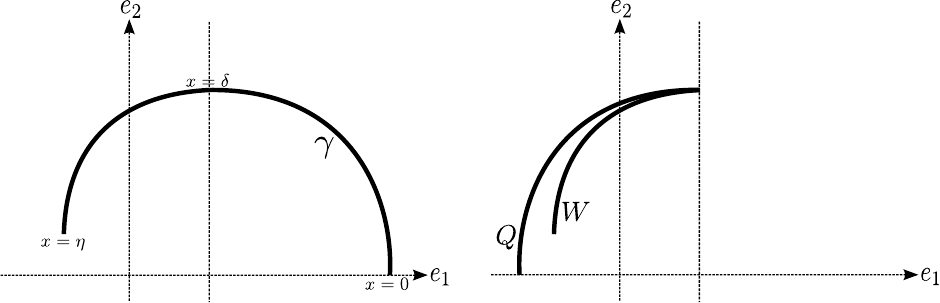}
  \label{fig:Q_and_W}
\end{figure}

We now prove the key properties of the lower curve.  Here again $\theta$ denotes the counterclockwise
angle of a nonzero vector, where $\theta$ takes values in $(-\pi, \pi]$.
\begin{lem}
	\label{lem:lower_curve_properties}
	The following properties of the lower curve are true.  For each $z \in L$,
		\begin{enumerate}
			\item	$\gamma_1(\bar{z}) - \gamma_1(\delta) \geq \gamma_1(\delta) - \gamma_1(z)$
			\item	$\theta( \gamma'(\bar{z})) \geq - \theta(\gamma'(z))$
		\end{enumerate}
\end{lem}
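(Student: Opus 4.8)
The plan is to prove both inequalities simultaneously by comparing the curvature of the upper and lower curves via the Curvature Comparison Theorem (Proposition \ref*{prop:curvature_comparison}), applied to the functions $Q$ and $W$ defined just above the statement. Here $Q = 2\gamma_1(\delta) - \gamma_1(h)$ describes (a horizontal reflection of) the upper curve written as a function of height $t = \gamma_2$, and $W = \gamma_1(k)$ describes the lower curve written as a function of height. Both are defined on an interval of the form $(\gamma_2(z), \gamma_2(\delta))$, and the role of $b$ in Proposition \ref*{prop:curvature_comparison} will be played by the top value $\gamma_2(\delta)$, where both curves pass through the top point $\gamma(\delta)$ of the upper curve, which has $\gamma_1(\delta) = \gamma_1(\bar z)$ at the top. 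Reflecting the upper curve through the vertical line $x = \gamma_1(\delta)$ is exactly what makes $Q$ and $W$ comparable: near the top both have nonnegative derivative with respect to $t$ (the upper curve has $\gamma'$ in the second quadrant, so $\gamma_1$ decreases as $\gamma_2$ increases, hence $Q$ increases; the lower curve has $\gamma'$ in the third quadrant, so $\gamma_1$ decreases as $\gamma_2$ decreases, i.e. $W$ increases in $t$ as well).

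First I would verify the four hypotheses of Proposition \ref*{prop:curvature_comparison} with $f = Q$, $g = W$, and $b = \gamma_2(\delta)$. Hypotheses (1) and (2) (existence of limits of tangent vectors and of function values as $t \to b^-$) follow from smoothness of $\gamma$ on $(-\beta,\beta)$ and the fact that the top point $\gamma(\delta)$ is a regular point where $\gamma'(\delta) = (-1,0)$ by Proposition \ref*{prop:structure}; at the top both $Q$ and $W$ tend to $\gamma_1(\delta)$ and both graphs become vertical, so the limiting $\theta$ values agree, giving the needed (nonstrict) inequalities in the tangent-angle part of hypothesis (4). Hypothesis (3) ($Q' \ge 0$, $W' \ge 0$) is the monotonicity noted above. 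The value inequality $\lim Q \le \lim W$ at the top is in fact equality, $\gamma_1(\delta) = \gamma_1(\delta)$. The heart of the matter is the curvature inequality $\kappa_Q(t) \le \kappa_W(t)$ for all $t$: reflection through a vertical line preserves (unsigned) curvature, so $\kappa_Q(t)$ is the curvature of the upper curve at the point of height $t$, namely $\kappa(\bar z)$ where $\gamma_2(\bar z) = t$, and $\kappa_W(t) = \kappa(k(t))$ is the curvature of the lower curve at height $t$; one has to check the sign conventions match up so that "upward curvature of the graph" corresponds to the inward curvature $\kappa$ of $\gamma$ (this is where one uses that $\gamma$ is a counterclockwise parametrization, $\gamma'$ is in the second quadrant on the upper curve and in the third on the lower curve). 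Once this identification is made, the needed bound $\kappa_Q(t) \le \kappa_W(t)$ is precisely condition (2) in the definition of $L$ (Definition \ref*{defn:lower_curve}), which holds for every $y$ up to $z$, hence for every height $t$ in the relevant interval.

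Having checked the hypotheses, Proposition \ref*{prop:curvature_comparison} immediately yields $Q(t) \le W(t)$ and $\theta(t_Q(t)) \ge \theta(t_W(t))$ for all $t$ in the interval. Evaluating $Q(t) \le W(t)$ at $t = \gamma_2(z)$ gives $2\gamma_1(\delta) - \gamma_1(\bar z) \le \gamma_1(z)$, i.e. $\gamma_1(\bar z) - \gamma_1(\delta) \ge \gamma_1(\delta) - \gamma_1(z)$, which is conclusion (1). For conclusion (2), the tangent-angle inequality $\theta(t_Q) \ge \theta(t_W)$ at height $t = \gamma_2(z)$ must be translated back to statements about $\gamma'(\bar z)$ and $\gamma'(z)$: the tangent to the graph of $W$ at height $t$ has angle related to $\gamma'(z)$ (the lower curve's unit tangent in the third quadrant), and the tangent to the graph of $Q$ at height $t$ relates to the horizontal reflection of $\gamma'(\bar z)$. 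Reflecting the upper curve's tangent $\gamma'(\bar z)$ (which lies in the second quadrant) through a vertical axis sends it to the first quadrant, and the graph-of-$Q$ tangent angle is then the complement; likewise for $W$. Carefully chasing these angle relations — reflection through the vertical line negates the horizontal component, and passing from "graph over the $t$-axis" to "curve in the plane" swaps the roles of the two coordinates — converts $\theta(t_Q) \ge \theta(t_W)$ into exactly $\theta(\gamma'(\bar z)) \ge -\theta(\gamma'(z))$, conclusion (2).

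The main obstacle I anticipate is bookkeeping the orientation and reflection conventions in the curvature-sign and tangent-angle identifications: matching "upward curvature of a graph written as $x$ versus $t$" with the "inward curvature $\kappa$ of the counterclockwise-parametrized $\gamma$" after a horizontal reflection, and similarly translating the graph tangent angles $\theta(t_Q), \theta(t_W)$ (which live in $[0,\pi/2)$ by Proposition \ref*{prop:curvature_comparison}) back into the planar tangent angles $\theta(\gamma'(\bar z))$ (second quadrant) and $\theta(\gamma'(z))$ (third quadrant). None of this is deep, but it requires care to get the inequalities pointing the right way. A secondary subtlety is handling the endpoint behavior at $t = \gamma_2(\delta)$ where both curves are vertical — one should note that $\delta$ itself lies in $L$ (as recorded in Definition \ref*{defn:lower_curve}) so $\bar\delta = \delta$ and all the quantities at the top coincide, making the limiting hypotheses of Proposition \ref*{prop:curvature_comparison} hold with equality rather than strict inequality, which is permitted.
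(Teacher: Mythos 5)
Your proposal is correct and is exactly the paper's argument: the paper's proof of this lemma is the single sentence ``This is a direct consequence of Proposition \ref*{prop:curvature_comparison} applied to $Q$ and $W$,'' and your write-up is a careful verification of precisely that application, with the hypotheses checked via Proposition \ref*{prop:structure} and condition (2) of Definition \ref*{defn:lower_curve}, and the conclusions translated back through the reflection and coordinate-swap exactly as intended.
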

\begin{proof}
	This is a direct consequence of Proposition \ref*{prop:curvature_comparison} applied to $Q$ and $W$.
\end{proof}

We now begin to prove important properties of $\eta$.

\begin{lem}
	\label{lem:lower_curve_eta}
	If $\gamma$ is not a centered circle, then we have that $\eta < \beta$, and $\eta \in L$.
\end{lem}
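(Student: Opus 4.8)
The plan is to argue by contradiction on each of the two conclusions, exploiting the structure of the lower curve together with the comparison machinery already established. First I would show $\eta < \beta$. Suppose instead that $\eta = \beta$. Then $\gamma'$ lies in the third quadrant on all of $[\delta,\beta)$, so $\gamma_2$ is strictly decreasing there (since $\kappa>0$ forces $\gamma'\neq(-1,0)$ on a set of positive measure near any interior point, and in any case $\gamma'$ cannot be identically $(-1,0)$). Because $\gamma_2(\delta)>0$ and $\gamma_2(\beta)=0$ (property 4 of $\gamma$), the curve must actually reach the $e_1$ axis, and moreover the limit $\nu=\lim_{x\to\beta^-}\gamma'(x)$ exists, with $\nu$ in the closed third quadrant. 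By Lemma \ref{lem:gamma_behavior_beta}, $\nu_1\leq 0$; combined with $\gamma'$ being in the third quadrant and the curve meeting the axis, this is consistent, so I need to extract the contradiction from the curvature comparison rather than from direction alone. Concretely, apply Lemma \ref{lem:lower_curve_properties}: for each $z\in L$ with $\gamma_2(z)$ small, we have $\theta(\gamma'(\bar z))\geq -\theta(\gamma'(z))$, and the upper-curve partner $\bar z$ satisfies $\gamma'(\bar z)$ in the second quadrant with $\gamma_1(\bar z)$ bounded. As $z\to\beta^-$, $\gamma_2(z)\to 0$, so $\bar z\to 0$ along the upper curve and $\gamma'(\bar z)\to(0,1)$, forcing $\theta(\gamma'(\bar z))\to\pi/2$; but then $-\theta(\gamma'(z))\leq \pi/2$ is automatic and gives no contradiction directly, so instead I would use the strict part of Proposition \ref{prop:curvature_comparison}: since $\gamma$ is not a centered circle, the First/Second Tangent analysis (or more elementarily the fact that $\kappa(\bar z)<\kappa(z)$ on a set of positive measure, coming from the strict inequalities in Lemmas \ref{lem:curvature_bound}–\ref{lem:second_derivative_kappa}) produces a $\phi>0$ with $\theta(\gamma'(\bar z))-(-\theta(\gamma'(z)))\geq\phi$ for all $z$ past some point. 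Letting $z\to\beta^-$ then forces $-\theta(\gamma'(z))\leq \pi/2-\phi$, i.e. $\theta(\gamma'(z))\geq -\pi/2+\phi$, which keeps $\gamma_1'(z)=\cos\theta(\gamma'(z))$ bounded away from being able to make $\gamma_1$ turn back; tracking the $\gamma_1$ inequality (1) of Lemma \ref{lem:lower_curve_properties}, namely $\gamma_1(\bar z)-\gamma_1(\delta)\geq\gamma_1(\delta)-\gamma_1(z)$, and using that $\gamma_1(\bar z)\to\gamma_1(0)=x^*$ as $z\to\beta$ while $\gamma_1(\delta)>0$, I would derive that $\gamma_1(\beta)=2\gamma_1(\delta)-x^*$ or a similar explicit value, and then contradict property 7 ($|\gamma(\beta)|\le|\gamma(0)|=x^*$) unless $\gamma$ is the centered circle — since $\gamma_1(\delta)<x^*$ whenever $\gamma$ is not a centered circle (as $\gamma'(\delta)=(-1,0)$ is attained strictly inside by Proposition \ref{prop:structure}).

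Next I would show $\eta\in L$. Since $\gamma_2$ is strictly monotone and bounded on $[\delta,\eta)$, $\lim_{x\to\eta^-}\gamma(x)$ exists; call it $\gamma(\eta)$ (this agrees with the continuous extension since $\gamma$ is smooth on $(-\beta,\beta)$ and $\eta<\beta$). By continuity of $\gamma'$, $\gamma'(\eta)$ lies in the \emph{closed} third quadrant, so I must rule out $\gamma'(\eta)=(0,-1)$ and $\gamma'(\eta)=(-1,0)$ to get property (1) of the definition of $L$. For property (2) I would take the limit in the inequality $\kappa(\bar z)\leq\kappa(z)$ along $z\to\eta^-$, using that $\bar z\to\overline\eta$ (the upper-curve partner of $\eta$, which exists because $0\leq\gamma_2(\eta)<\gamma_2(\delta)$ and the upper curve sweeps monotonically through those heights) and that $\kappa$ is continuous; this gives $\kappa(\overline\eta)\leq\kappa(\eta)$, so (2) holds at $\eta$ and hence on all of $[\delta,\eta]$. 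The boundary directions $(0,-1)$ and $(-1,0)$: if $\gamma'(\eta)=(-1,0)$ then, as in the upper-curve argument, the curve locally lies to one side of a vertical line through $\gamma(\eta)$ and $\gamma_1(\eta)<\gamma_1(0)$, so $\partial A$ lies locally in a half-space and is regular there — fine, but this would mean we have found our tangent point prematurely; I would instead note $\gamma'(\eta)=(-1,0)$ with $\gamma_2(\eta)>0$ is actually compatible with $\eta\in L$, so the real content is just that $\gamma'(\eta)$ stays in the closed third quadrant, which continuity gives, and then a short argument (using $\kappa(\eta)\geq\kappa(\overline\eta)\geq\kappa(C_{\overline\eta})>0$ — wait, one needs $\kappa(\overline\eta)>0$, which holds since $\overline\eta$ is on the upper curve where $\kappa(C_y)>0$ and $\kappa\geq\kappa(C_y)$) shows $\gamma'(\eta)\neq(0,1)$ so it is genuinely in the third (closed) quadrant and $\eta\in L$.

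The main obstacle I anticipate is the $\eta<\beta$ part: I must convert the "lower curve curves faster than the upper curve" heuristic into a genuine contradiction at the axis. The clean way is to apply Proposition \ref{prop:curvature_comparison} to the pair $Q,W$ on the \emph{full} interval $(0,\gamma_2(\delta))$ (which would be available if $\eta=\beta$, since then $L\supset[\delta,\beta)$ covers all heights down to $0$), obtaining at the left endpoint $t\to 0^+$ the inequality $W(0^+)\leq Q(0^+)$, i.e. $\gamma_1(\beta)\leq 2\gamma_1(\delta)-\gamma_1(0)=2\gamma_1(\delta)-x^*$, together with the angle inequality; since also $|\gamma(\beta)|\leq x^*$ forces $\gamma_1(\beta)\geq -x^*$, these pinch $\gamma_1(\delta)\geq 0$, which is not yet a contradiction — so the strict clause of Proposition \ref{prop:curvature_comparison} is essential: because $\gamma$ is not a centered circle there is a height at which $\kappa(\overline z)<\kappa(z)$ strictly (this is exactly where Lemmas \ref{lem:second_derivative_kappa} and \ref{lem:curvature_bound} deliver strict inequalities, i.e. outside $B_{\mathcal R(f)}$, which is where $\gamma$ lives near the top), giving $\phi>0$ in the angle comparison and hence, by the argument in the proof of Proposition \ref{prop:curvature_comparison}, a strict gap $W(t)<Q(t)$ that propagates; chasing this strict gap down to $t=0$ yields $x^*=\gamma_1(\beta)<2\gamma_1(\delta)-x^*$ while simultaneously $\gamma_1(\delta)<x^*$ from Proposition \ref{prop:structure} (item 5 forces $\gamma'(\delta)=(-1,0)$, attained strictly inside the maximal radius), and these two are contradictory. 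Making the bookkeeping of which endpoint limits exist and are finite precise — using property 5 (smoothness on $(-\beta,\beta)$), Lemma \ref{lem:gamma_behavior_beta}, and the monotonicity of $\gamma_2$ — is the fiddly but routine remaining work.
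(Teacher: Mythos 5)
Your overall skeleton is the paper's: assume $\eta=\beta$, regard the upper and lower curves as the graphs $Q$ and $W$ over the height variable on all of $(0,\gamma_2(\delta))$, and invoke the strict clause of Proposition \ref*{prop:curvature_comparison}. But there is a genuine gap at the heart of the argument. The strict curvature inequality between the two curves at a common height does \emph{not} come from Lemmas \ref*{lem:curvature_bound} and \ref*{lem:second_derivative_kappa}: those control $\kappa'$ and $\kappa''$ \emph{along} the upper curve and give no comparison between $\kappa(z)$ for $z$ on the lower curve and $\kappa(\bar z)$ at the matching height on the upper curve. The needed mechanism is the one the preliminary section was built for: for $z$ with partner $\bar z$ close enough to $0$ that $\gamma(\bar z)\notin B_{\mathcal{R}(f)}$, Lemma \ref*{lem:lower_curve_properties} together with $F(\bar z)\le F(\delta)$ and $\gamma_1(\delta)>0$ from Proposition \ref*{prop:structure} shows that $(\gamma'(\bar z),\gamma'(z))$ is an admissible pair in the sense of Definition \ref*{defn:admissible}, and that the two canonical circles cannot coincide as a centered circle; Proposition \ref*{prop:normal_computation} then yields $|\gamma(z)|\le|\gamma(\bar z)|$ and a strict inequality of the normal components, hence $H_1(z)<H_1(\bar z)$, while the angle inequality of Lemma \ref*{lem:lower_curve_properties} gives $\kappa(C_z)\le\kappa(C_{\bar z})$; constancy of $H_f$ then forces the strict gap between $\kappa(z)$ and $\kappa(\bar z)$. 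This is where the log-convexity of the density and the monotonicity of $g'$ actually enter, and your proposal never supplies it.

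The second problem is that you derive the correct contradiction and then walk past it. From $\theta_W+\phi\le\theta_Q$ near height $0$ you correctly obtain $-\theta(\gamma'(z))\le\pi/2-\phi$ for $z\in L$ with $\gamma_2(z)$ small, using $\theta(\gamma'(\bar z))\to\pi/2$ as $\bar z\to 0$. Membership of $z$ in $L$ requires $\gamma'(z)$ to lie in the third quadrant, i.e.\ $-\theta(\gamma'(z))\ge\pi/2$; that is already the contradiction, and it is exactly how the paper closes. The alternative you pursue instead, chasing $\gamma_1(\bar z)-\gamma_1(\delta)\ge\gamma_1(\delta)-\gamma_1(z)$ down to the axis, both contains an error ($\gamma_1(\beta)$ is the left intersection of $\gamma$ with the axis, not $x^*$) and is a dead end even when repaired: as you yourself note, it only pinches out $\gamma_1(\delta)\ge 0$ (or $>0$ in the strict version), which Proposition \ref*{prop:structure} already asserts and which contradicts nothing. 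Your treatment of the second conclusion, $\eta\in L$ by continuity of $\gamma'$ and $\kappa$ at $\eta<\beta$ and passing to the limit in the defining inequalities, is fine and matches the paper.
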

\begin{proof}
		Consider again the functions $Q$ and $W$.  If $\eta = \beta$, then they are both defined on
	$(0, \gamma_2(\delta))$.  We will show that, for some $\mu \in (0, \gamma_2(\delta))$, $\kappa_W(\mu) > \kappa_Q(\mu)$ (here, we use the upward curvature).
	By the second part of Proposition \ref*{prop:curvature_comparison}, we then have that there is some $\phi > 0$ and $\epsilon > 0$ such that
	$\theta_W + \phi \leq \theta_Q$ at all points in $(0, \epsilon)$.  This translates into the fact that,
	if $z \in L$ with $\gamma_2(z) \in (0, \epsilon)$, then
		$$\theta(\gamma'(\bar{z}) ) \geq \phi - \theta(\gamma'(z)).$$
	Since $\lim_{x \rightarrow 0^+} \theta(\gamma'(x)) = \theta(\gamma'(0)) = \frac{\pi}{2}$, this implies that there is some
	$z \in L$ with $\gamma_2(z) \in (0, \epsilon)$, $\gamma'(z) \neq (-1,0)$, and $-\theta(\gamma'(z)) < \frac{\pi}{2}$.  One of the properties of
	$z \in L$, however, is that $\gamma'(z)$ must lie in the third quadrant, so this yields a contradiction.

		Since $\eta < \beta$, $\gamma$ is smooth at $\eta$.  Hence, $\gamma'(\eta)$ is in the third quadrant, and $\kappa(\eta) \geq \kappa(\overline{\eta})$, and
	so $\eta \in L$.
		We have thus reduced the problem to showing that such a $\mu > 0$ exists.  Since $\gamma(0) \not \in B_{\mathcal{R}(f)}$,
	there is some $0 < \epsilon < \delta$ such that $\gamma(\bar{z}) \not \in B_{\mathcal{R}(f)}$ for all $z \in [\delta, \beta)$ with $\bar{z} \in [0, \epsilon]$.  If we choose any pair
	$(z, \bar{z})$ with $\bar{z} \in [0, \epsilon]$, $\gamma'(z)$ and $\gamma'(\bar{z})$ are admissible vectors with respect to
	$\gamma(z)$ and $\gamma(\bar{z})$, in terms of Definition \ref*{defn:admissible}.
	This is a result of Lemma \ref*{lem:lower_curve_properties} and the fact that
	$F(\bar{z}) \leq F(\delta)$ (Proposition \ref*{prop:structure}).
	Letting $C_1$ and $C_2$ be the two circles as per this definition,
	we cannot have that $C_1 = C_2$ and $C_1$ centered since $| \gamma_1(z) | < | \gamma_1(\bar{z}) |$ as a result of
	the fact that $\gamma_1(\delta) > 0$ (Proposition \ref*{prop:structure}) and $\gamma_1(\delta) - \gamma_1(z) \leq
	\gamma_1(\bar{z}) - \gamma_1(\delta)$ (Lemma \ref*{lem:lower_curve_properties}).  As such, Proposition
	\ref*{prop:normal_computation} yields the conclusion that
		$$| \gamma(z) | \leq | \gamma(\bar{z})  |$$
	and
		$$N(\gamma(z)) \cdot \gamma'(z)^\perp < N(\gamma(\bar{z})) \cdot \gamma'(\bar{z})^\perp.$$
	Combining these results with the facts that $g'(|\gamma(\bar{z})|) > 0,$ and $g'(|\gamma(z)|) \leq g'(|\gamma(\bar{z})|)$, we can conclude that
		$$  g'(|\gamma(z)|) (N(\gamma(z)) \cdot \gamma'(z)^\perp = \frac{\partial g}{\partial \nu}(z) < \frac{\partial g}{\partial \nu}(\bar{z}) = 
		g'(|\gamma(\bar{z})|) N(\gamma(\bar{z})) \cdot \gamma'(\bar{z})^\perp.$$

		Furthermore, from Lemma \ref*{lem:lower_curve_properties}, we have that
		$$\theta(\gamma'(\bar{z})) \geq - \theta(\gamma'(z)),$$
	and so $\kappa(C_z) \leq \kappa(C_{\bar{z}})$.
Using the fact that $H_f(z) = H_f(\bar{z})$, we get that $\kappa(z) < \kappa(\bar{z})$ for all $z$ with $\bar{z} \in (0, \epsilon)$.  This concludes the proof.	
\end{proof}

%%%%%%%%%%%%%%%%%%%%%%%%%%%%%%%%%%%%%%%%%%%%%%%
\subsection{Proof of First Tangent Lemma}
%%%%%%%%%%%%%%%%%%%%%%%%%%%%%%%%%%%%%%%%%%%%%%%

Before completing the proof of Lemma \ref*{lem:first_tangent}, we will need an additional lemma.

\begin{lem}
\label{lem:boundary_points}
If there exists a point $p \in (0, \delta]$ in the upper curve such that
	$$ p \in \partial B_{\mathcal{R}(f)}$$
and
	$$ \kappa(C_p) = \kappa(p), $$
then there exists an $\omega > 0$ such that $p > \omega$, and
	$$ \sup_{y \in [p - \omega, p]} \kappa(y) = \kappa(p).$$
\end{lem}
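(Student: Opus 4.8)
The goal is to produce $\omega>0$ (necessarily $\omega<p$, as $p>0$) with $\kappa(y)\le\kappa(p)$ for all $y\in[p-\omega,p]$; since the value $\kappa(p)$ is attained, this gives $\sup_{y\in[p-\omega,p]}\kappa(y)=\kappa(p)$.

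\emph{Consequences of the hypotheses.} As $f\equiv f(0)$ on $B_{\mathcal{R}(f)}$ and $g\in C^{3}$, we have $g'(\mathcal{R}(f))=0$, so $H_1(p)=0$; combining $H_f(p)=c$ with Proposition~\ref*{prop:mean_curvature} and $\kappa(C_p)=\kappa(p)$ gives $c=(n-1)\kappa(p)$. Also $\gamma$ is not a centered circle (a centered circle has constant radius $|\gamma(0)|=|x^{\star}|>\mathcal{R}(f)$ by property~$7$, hence never meets $\partial B_{\mathcal{R}(f)}$), so by Lemma~\ref*{lem:C_x_equality} the circle $C_p$ is not centered and $F(p)>0$ by Lemma~\ref*{lem:positive_e_1}. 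Finally $\gamma'(p)\cdot N(p)<0$ \emph{strictly}: since $C_p$ is tangent to $\gamma$ at $p$, $\gamma'(p)\perp\bigl(\gamma(p)-(F(p),0)\bigr)$, and $\gamma'(p)\perp\gamma(p)$ would make $\gamma(p)$ collinear with the origin and $(F(p),0)$, impossible because $\gamma_2(p)>0$; with Lemma~\ref*{lem:tangent_restriction} this makes the derivative of $y\mapsto|\gamma(y)|$ at $p$ negative, so $|\gamma(y)|>\mathcal{R}(f)$ for $y<p$ near $p$, and hence $H_1(y)=g'(|\gamma(y)|)\bigl(N(y)\cdot\gamma'(y)^{\perp}\bigr)>0$ there (the factor being positive because $\gamma_1(y)\ge\gamma_1(\delta)>0$, $\gamma_2(y)>0$, and $\gamma'(y)$ is in the second quadrant).

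\emph{The key identity.} Subtracting $H_f=c$ at the regular point over $\gamma(y)$ (valid on the upper curve) from its value at $p$ and using $\kappa(C_p)=\kappa(p)$, $H_1(p)=0$, with $\psi(y):=\kappa(y)-\kappa(C_y)\ge0$ on the upper curve (definition of $K$), one gets
$$(n-1)\bigl(\kappa(y)-\kappa(p)\bigr)=(n-2)\,\psi(y)-H_1(y),$$
so it suffices to show $(n-2)\psi(y)\le H_1(y)$ for $y$ slightly below $p$; this is trivial for $n=2$, so take $n\ge3$. I would then control the vanishing of $\psi$ at $p$: $\psi\ge0$ and $\psi(p)=0$ give $\psi'(p)\le0$; on the other hand $\kappa'(p)\ge0$ by Lemma~\ref*{lem:curvature_bound} (its hypotheses hold at $p$), and $\kappa(C_{\cdot})'(p)=0$ since $A_p=C_p$ (because $\kappa(C_p)=\kappa(p)$) and $A_p$ approximates $\gamma$ to third order, so $\kappa(C_{\cdot})$ matches to first order the value $\kappa(p)$ it takes constantly along $C_p$; hence $\psi'(p)=\kappa'(p)=0$. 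Then $\kappa'(p)=0$ makes $A_p=C_p$ agree with $\gamma$ through third derivatives at $p$, so $\kappa(C_{\cdot})''(p)=0$, and differentiating the identity twice gives $\psi''(p)=\kappa''(p)=-H_1''(p)$.

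\emph{Conclusion, and the main obstacle.} If $g''(\mathcal{R}(f))>0$, then since $y\mapsto|\gamma(y)|$ has negative derivative at $p$ we get $|\gamma(y)|-\mathcal{R}(f)=\Theta(p-y)$ and thus $H_1(y)=\Theta(p-y)$ as $y\to p^{-}$, dominating $\psi(y)=O((p-y)^{2})$; hence $(n-2)\psi(y)<H_1(y)$ on a small $[p-\omega,p]$, and we are done. The hard case is $g''(\mathcal{R}(f))=0$: convexity makes $\mathcal{R}(f)$ a minimum of the non-negative function $g''$, so $g'''(\mathcal{R}(f))=0$ as well, giving $H_1''(p)=0$, hence $\psi''(p)=\kappa''(p)=0$; this makes $A_p=C_p$ agree with $\gamma$ through fourth derivatives at $p$, so $\kappa(C_{\cdot})'''(p)=0$ and therefore $\kappa(C_p)-\kappa(C_y)=o((p-y)^{3})$, and from $\psi(y)=(n-1)\bigl(\kappa(C_p)-\kappa(C_y)\bigr)-H_1(y)\ge0$ also $H_1(y)=o((p-y)^{3})$. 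The remaining inequality $(n-2)\bigl(\kappa(C_p)-\kappa(C_y)\bigr)\le H_1(y)$ — which is equivalent to the goal — is the genuine difficulty, since both sides are $o((p-y)^{3})$ and $g$ is only of class $C^{3}$; closing it requires balancing the two rates quantitatively, using the ODE $H_f=c$ together with the sign constraint $\psi\ge0$, in the spirit of the computational lemmas of Appendix~1. I expect essentially all of the real work of the proof to lie in this degenerate case.
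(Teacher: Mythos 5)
Your reduction is sound as far as it goes: the identity $(n-1)\bigl(\kappa(y)-\kappa(p)\bigr)=(n-2)\psi(y)-H_1(y)$ is correct, it disposes of $n=2$ immediately, and the preliminary facts ($c=(n-1)\kappa(p)$, $F(p)>0$, $\gamma'(p)\cdot N(p)<0$) all check out. The problem is that your case split on $g''(\mathcal{R}(f))$ is illusory. Since $f\equiv f(0)$ on $B_{\mathcal{R}(f)}$, the function $g$ is constant on $[0,\mathcal{R}(f)]$, and the hypothesis $p\in\partial B_{\mathcal{R}(f)}$ with $\gamma_2(p)>0$ forces $\mathcal{R}(f)=|\gamma(p)|>0$; hence $g''(\mathcal{R}(f))=\lim_{t\to\mathcal{R}(f)^-}g''(t)=0$ in every instance to which the lemma applies (and likewise $g'''(\mathcal{R}(f))=0$). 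So your ``non-degenerate'' branch is vacuous, and the case you explicitly leave open is the entire content of the lemma for $n\geq 3$. As you yourself observe, in that case both $(n-2)\psi(y)$ and $H_1(y)$ vanish to high order at $p$, and a pure rate comparison against the endpoint $p$ cannot be closed with $g$ only of class $C^3$. This is a genuine gap, not a technicality.

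The paper avoids any such rate comparison by arguing by contradiction \emph{within} the interval rather than against $p$: if no $\omega$ works, choose $a_i\uparrow p$ with $\kappa(a_i)=\sup_{[a_i,p]}\kappa>\kappa(p)$ and $b_i\in[a_i,p]$ realizing the infimum, and set $D=\kappa(a_i)-\kappa(b_i)>0$. Two estimates are then fed into $H_f(a_i)=H_f(b_i)$: first, $H_1'\leq 0$ on $[p-\epsilon,p]$ (the Appendix computation you already invoke), so $H_1(a_i)-H_1(b_i)\geq 0$; second --- and this is the ingredient your approach lacks --- the oscillation of the canonical-circle curvature obeys $|\kappa(C_{a_i})-\kappa(C_{b_i})|\leq\xi\,(p-a_i)\,D$, i.e.\ it is controlled by the oscillation of $\kappa$ itself times the short length $p-a_i$ (proved by comparing $\gamma$ to an arc of $C_p$ in the spirit of Proposition \ref*{prop:curvature_comparison}). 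Substituting into the constancy of $H_f$ yields $0\geq\bigl(1-(n-2)\xi(p-a_i)\bigr)D\geq D/2>0$ for $i$ large, a contradiction. That self-referential bound on $|\kappa(C_{a_i})-\kappa(C_{b_i})|$ in terms of $D$ is what replaces the quantitative balancing you were hoping for; without it, or something playing the same role, your argument does not close.
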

\begin{proof}
If such an $\omega$ does not exist, then we can find an increasing sequence $a_i \in (0, p)$ with the properties
that
	\begin{enumerate}
		\item	$\sup_{y \in [a_i,p]} \kappa(y) = \kappa(a_i) > \kappa(p)$
		\item 	$\lim_{i \rightarrow \infty} a_i = p.$
	\end{enumerate}

For each $a_i$, choose $b_i \in [a_i,p]$ so that
	$$\kappa(b_i) = \inf_{y \in [a_i,p]} \kappa(y).$$
Of course, $\kappa(b_i) \leq \kappa(p)$ for each $i$.

We next make several observations.  First, we can find an $\epsilon > 0$ such that $p > \epsilon$
and, on $[p - \epsilon, p]$,
	$$H_1' \leq 0.$$
This result can be proved by using the fact that $A_x$ approximates $\gamma$ up to the third order for every regular $x$,
combined with Lemma \ref*{lem:curvature_bound_details} in Appendix 1.  Indeed, since $\kappa(C_p) = \kappa(p)$ and $p \neq 0$,
if we choose $\epsilon$ to be small enough, then for every $x \in [p - \epsilon, p]$, all criteria of Lemma \ref*{lem:curvature_bound_details}
are satisfied, yielding the desired result.  This observation means that, for each $a_i$ and $b_i$ with $i$ sufficiently large,
	$$H_1(a_i) \geq H_1(b_i).$$

The second observation is that there exists a $\xi > 0$ such that, for $a_i$ large enough and
for any $r, s \in [a_i, p]$,
	$$|\kappa(C_r) - \kappa(C_s)| \leq \xi (p - a_i) \cdot (\kappa(a_i) - \kappa(b_i)).$$
Let $D = \kappa(a_i) - \kappa(b_i)$.
The proof of this statement involves comparing $\gamma$ near $p$ to the arc of $C_p$ from $\gamma(p)$ clockwise to the $e_1$-axis.
By analyzing these two curves in a manner similar to that used to prove Proposition \ref*{prop:curvature_comparison}, we have that
the derivatives differ by a constant times $(p - a_i) \cdot D$.  As such, the difference between the curves themselves
is bounded by a constant times $(p - a_i)^2 \cdot D$.  Using the definition of $\kappa(C_x)$, we have that $\kappa(C_r)$ and $\kappa(C_s)$ thus both differ
from $\kappa(C_p)$ by a quantity bounded by a constant times $(p - a_i) \cdot D$, completing the proof.  Note that these estimates make use of the fact that $p - a_i$ and $D$ are sufficiently small,
which we have since $D \rightarrow 0$ as $a_i \rightarrow p^-$.

We now use these two observations to produce a contradiction.  Choose an $a_i$ so large
such that $0 < p - a_i < \frac{1}{2 n \xi}$, $H(b_i) \leq H(a_i)$, and the second observation
above holds for $a_i$.  Using the fact that the generalized mean curvature is constant, we have that
	\begin{align*}
		0	&=	H_f(a_i) - H_f(b_i) \\
			&=	(n-2)(\kappa(C_{a_i}) - \kappa(C_{b_i})) + (\kappa(a_i) - \kappa(b_i)) + ( H_1(a_i) - H_1(b_i)) \\
			&\geq	(n-2)(-\xi (p - a_i) \cdot (\kappa(a_i) - \kappa(b_i))) + \kappa(a_i) - \kappa(b_i)  \\
			&\geq	(1 - (n-2)\xi (p - a_i)) ( \kappa(a_i) - \kappa(b_i) ) \\
			&\geq	\frac{\kappa(a_i) - \kappa(b_i)}{2} \\
			&>	0
	\end{align*}
which is a contradiction, completing the proof.
\end{proof}

We can now prove Lemma \ref*{lem:first_tangent}.  To begin, assume that $\gamma$ is not a centered circle -
 if it is, then we are done.  In particular, Proposition \ref*{prop:structure} and Lemma \ref*{lem:lower_curve_properties} are true.  Also, we will use the fact that
$\eta \in L$ from Lemma \ref*{lem:lower_curve_eta}.  In particular, $\eta \in (0, \beta)$, $\kappa(\eta) > 0$, and $\gamma$ is smooth at $\eta$.  Thus, if $\gamma'(\eta) = (0,-1)$, then 
we will be done.  This is exactly what we will prove.

We will break the rest of the proof into two cases
based on whether $\gamma(\delta) \in B_{\mathcal{R}(f)}$.  We first consider the case when $\gamma(\delta) \not \in B_{\mathcal{R}(f)}$:

\begin{proof}[Proof of First Tangent Lemma - First Case]
In this case,
by Lemma \ref*{lem:outside_ball}, we have that $\kappa'(\delta) > 0$.  This combined with the fact that $\kappa(\delta) > 0$ means that
$\eta > \delta$.  Now let us assume that $\gamma'(\eta) \neq (0,-1)$.
By Lemma \ref*{lem:lower_curve_properties} along with the fact that $F(\bar{\eta}) \leq F(\delta)$
from Proposition \ref*{prop:structure},
we have that the vectors $\gamma'(\eta)$ and $\gamma'(\bar{\eta})$ are admissible with respect to
the points $\gamma(\eta)$ and $\gamma(\bar{\eta})$, as per Definition \ref*{defn:admissible}.  We can now use
Proposition \ref*{prop:normal_computation} to conclude that $\kappa(\eta) > \kappa(\bar{\eta})$, since 
$\gamma( \bar{\eta}) \not \in B_{\mathcal{R}(f)}$, $\gamma_1(\delta) > 0$, and
$\gamma_1(\delta) - \gamma_1(\eta) \leq \gamma_1(\bar{\eta}) - \gamma_1(\delta)$.

Combined with the fact that $\kappa(\eta) > 0$, this means that there is some $\epsilon > 0$ such that
$[\eta, \eta + \epsilon] \subset L$, contradicting the definition of $\eta$.  Hence, $\gamma'(\eta) = (0,-1)$.
\end{proof}

We now turn to the case when $\gamma(\delta) \in B_{\mathcal{R}(f)}$:

\begin{proof}[Proof of First Tangent Lemma - Second Case]
We first reduce the problem to
the case where $\kappa(\delta) = \kappa(C_\delta)$.  If this is not the case, then $\kappa(\delta) > \kappa(C_\delta)$.
Since $\gamma(\delta) \in B_{\mathcal{R}(f)}$, $\gamma(x) \in B_{\mathcal{R}(f)}$ for $x \in [\delta, \beta)$.  Let 
$\mathcal{S}$ be the set of all points $x$ such that, for $y \in [\delta,x]$, $\gamma'(y)$ is in the third quadrant, and
	$$ \kappa(y) > \kappa(C_y) > 0.$$
Consider $\alpha = \sup \mathcal{S}$.  We claim that, for all $x \in [\delta, \alpha)$,
	$$ \kappa(x) > \kappa(\delta) \geq \kappa(C_x) \geq 0.$$
To accomplish this, we use Lemma \ref*{lem:inside_ball}.  This lemma says that $\kappa' \geq 0$ and $\lambda' \leq 0$ on
$[\delta, \alpha)$.  In particular, $\kappa(x) > \kappa(C_\delta)$ for all $x \in [\delta, \alpha)$.  We then
have that $\alpha < \beta$ by comparing
$\gamma$ on $[\delta, \alpha)$ to the portion of $C_\delta$ from $\delta$ to $\beta$ using Proposition \ref*{prop:curvature_comparison}.
In other words, $\gamma$ curves faster that $C_\delta$ on $[\delta, \alpha)$, and so $\alpha < \beta$.   Furthermore, we have that
$\gamma'(\alpha) = (0,-1)$.  If this were not the case, then $\gamma'(\alpha)$ would be in the third quadrant, and
because $\kappa' \geq 0$ and $\lambda' \leq 0$ on $[\delta, \alpha)$,
	$$ \kappa(\alpha) \geq \kappa(\delta) > \kappa(C_\delta) \geq \kappa(C_\alpha) > 0.$$
Hence, $\kappa'(\alpha) \geq 0$ and $\lambda'(\alpha) \leq 0$, and so $\alpha \neq \sup \mathcal{S}$, a contradiction.  Thus,
$\alpha$ satisfies the conclusions of Lemma \ref*{lem:first_tangent}.

	The last case that we must deal with is when $\gamma(\delta) \in B_{\mathcal{R}(f)}$, and $\kappa(\delta) = \kappa(C_\delta)$.  In this case, the two curves and their tangents
agree at $\delta$.
By applying standard theorems concerning the uniqueness of solutions from the theory of ODEs to the ODE $H_f = c$, we see that $\gamma$ must be equal to $C_\delta$ on $\gamma \cap B_{\mathcal{R}(f)}$, since both are arclength parametrizations.  Since
we assumed that a portion of $\gamma$ lies outside $B_{\mathcal{R}(f)}$, there must exist a point $y$ on the upper curve such that
$y \in \partial B_{\mathcal{R}(f)}$.  Additionally, $\gamma'(y)$ is not in the tangent space of $\partial B_{\mathcal{R}(f)}$, since
by Lemma \ref*{lem:C_x_equality}, we would have that $\gamma = B_{\mathcal{R}(f)}$, and so $\gamma$ would be a centered circle, which is assumed to not be the case.
Hence, $\gamma = C_\delta$ on $[y, \beta]$, and $\gamma$ lies outside
$B_{\mathcal{R}(f)}$ on $[0, y)$.  Since $\kappa(C_y) = \kappa(y)$, Lemma \ref*{lem:boundary_points} implies that there is some $\omega > 0$
such that $y > \omega$ and $\kappa \leq \kappa(C_\delta)$ on $[y - \omega, y]$.
	
	Consider the lower curve, which ends at $\eta < \beta$.  We first observe that $\bar{\eta} \leq y - \omega$, since
$\kappa(x) = \kappa(C_\delta)$ for all $x \in [\delta, \eta]$, and so the definition of $\omega$
implies that, if $\bar{\eta} \geq y - \omega$, then $\eta$ cannot be the endpoint of the Lower
Curve (since, in this case, the lower curve can be extended past $\eta$).  We now play the same game as in the first part of this proof.
If $\gamma'(\eta) \neq (0,-1)$, then we can use Proposition \ref*{prop:normal_computation} along with
Lemma \ref*{lem:lower_curve_properties} and Proposition \ref*{prop:structure} to show that
$\kappa(\eta) > \kappa(\bar{\eta})$, which allows us to conclude that $\eta$ is not actually the endpoint of the lower curve, yielding a contradiction.
  
\end{proof}

%%%%%%%%%%%%%%%%%%%%%%%%%%%%%%%%%%%%%%%%%%%%%%%%%%%%
\section{Proof of Second Tangent Lemma}
%%%%%%%%%%%%%%%%%%%%%%%%%%%%%%%%%%%%%%%%%%%%%%%%%%%%

In this section, we will prove Lemma \ref*{lem:second_tangent}.  If there exists a point $x \in [0, \beta)$ with
$\gamma'(x) = (0,-1)$ and $\kappa(x) > 0$, then there exists a point $y \in (0, \beta)$ with $\gamma'(y) = (0,1)$.
Our main tool will be Lemma \ref*{lem:gamma_behavior_beta}; we will show that, if such a point $y$ does not exist, then we obtain a contradiction to this lemma.

The following proposition will constitute the bulk of the work in this section; it investigates the possible behavior of $\gamma$ as $x \rightarrow \beta^-$.
Again, $\theta$ denotes the counterclockwise angle of a unit vector, yielding values in the interval $[0, 2 \pi)$.
\begin{prop}
	\label{prop:continuity_tangent}
	Let $P$ be a point in $(0, \beta)$ with $\gamma'(P) = (0,-1)$, and $\kappa(P) > 0$.
	Then either there exists $\nu \in (P,\beta)$ such that $\gamma'(\nu) = (0,1)$,
	or the following facts are true:
	\begin{enumerate}
		\item	For each $x \in [P, \beta)$, $\kappa(x) > 0$.
		\item	For each $x \in (P, \beta)$, $\gamma'(x)$ strictly lies in the fourth
			quadrant.
		\item	$\lim_{x \rightarrow \beta^-} \gamma'(x)$ exists, lies in the fourth quadrant,
			and is not $(0,-1)$.
	\end{enumerate}
\end{prop}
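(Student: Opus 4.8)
The plan is to follow the unit tangent $\gamma'$ along $[P,\beta)$ through its continuous angle. Since $\gamma$ is a smooth counterclockwise arclength parametrization on $(-\beta,\beta)$ and $P\in(0,\beta)$, there is a $C^1$ function $\vartheta\colon[P,\beta)\to\RR$ with $\gamma'(x)=(\cos\vartheta(x),\sin\vartheta(x))$, $\vartheta(P)=-\tfrac{\pi}{2}$, and $\vartheta'=\kappa$; in particular $\vartheta'(P)=\kappa(P)>0$, so $\vartheta$ moves into $(-\tfrac{\pi}{2},0)$ just past $P$. In these terms conclusion~(1) says $\vartheta'>0$ on $[P,\beta)$, conclusion~(2) says $\vartheta\in(-\tfrac{\pi}{2},0)$ on $(P,\beta)$, and conclusion~(3) says $\vartheta$ has a limit at $\beta$ lying in $[-\tfrac{\pi}{2},0]$ and different from $-\tfrac{\pi}{2}$. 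The argument proceeds by contradiction within the dichotomy: assume no $\nu\in(P,\beta)$ satisfies $\gamma'(\nu)=(0,1)$, i.e. $\vartheta$ never takes a value in $\tfrac{\pi}{2}+2\pi\mathbb{Z}$ on $(P,\beta)$. Since $\vartheta$ is continuous and starts in $(-\tfrac{\pi}{2},0)$, it can never cross $\tfrac{\pi}{2}$; hence $\vartheta<\tfrac{\pi}{2}$ on $(P,\beta)$.

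The one local ingredient is the following: for every $x\in(0,\beta)$ with $\gamma_1'(x)\ge 0$ one has $\kappa(x)>0$. To see this, note that $\gamma$ is regular at such an $x$ and $\gamma_2(x)>0$. If $\gamma'(x)=(0,\pm1)$ then $C_x$ is a vertical line, so $\kappa(C_x)=0$. If instead $\gamma_1'(x)>0$ then $C_x$ is a genuine circle with centre $(F(x),0)$ on the $e_1$ axis, and $\gamma(x)$ lies on its upper half, where the counterclockwise unit tangent has strictly negative first coordinate; since the tangent to $C_x$ at $\gamma(x)$ is $\gamma'(x)$, which has positive first coordinate, $C_x$ must be clockwise and $\kappa(C_x)<0$. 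In both cases $\kappa(C_x)\le 0$, so by Proposition~\ref*{prop:mean_curvature} together with mean curvature convexity (Theorem~\ref*{thm:regularity}(2)), $\kappa(x)=H_0(x)-(n-2)\kappa(C_x)\ge H_0(x)>0$.

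I would then use this to pin down $\vartheta$. First, $\vartheta>-\tfrac{\pi}{2}$ on $(P,\beta)$: if $x_0>P$ were the first point with $\vartheta(x_0)=-\tfrac{\pi}{2}$, then $\vartheta>-\tfrac{\pi}{2}$ on $(P,x_0)$ (by continuity and minimality of $x_0$), which forces $\vartheta'(x_0)\le 0$; yet $\gamma_1'(x_0)=\cos(-\tfrac{\pi}{2})=0$, so $\kappa(x_0)=\vartheta'(x_0)>0$ by the local ingredient — a contradiction. Combined with $\vartheta<\tfrac{\pi}{2}$, this gives $\vartheta\in(-\tfrac{\pi}{2},\tfrac{\pi}{2})$ on $(P,\beta)$, hence $\gamma_1'=\cos\vartheta>0$ there, so the local ingredient applied at every $x\in(P,\beta)$ yields $\kappa>0$; with the hypothesis $\kappa(P)>0$ this is conclusion~(1), and in particular $\vartheta$ is strictly increasing on $[P,\beta)$. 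For conclusion~(2) it remains to rule out $\vartheta\ge 0$ somewhere. If $\vartheta$ reached $0$ then, by monotonicity, there is a first such $x_0\in(P,\beta)$ with $\vartheta>0$ on $(x_0,\beta)$, so $\gamma_2'=\sin\vartheta>0$ there; thus $\gamma_2$ is strictly increasing on $(x_0,\beta)$ and $\gamma_2(\beta)=\lim_{x\to\beta^-}\gamma_2(x)\ge\gamma_2(x_0)>0$, contradicting $\gamma_2(\beta)=0$ (property~4 of $\gamma$). Hence $\vartheta<0$ on $(P,\beta)$, which is conclusion~(2).

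Conclusion~(3) is then immediate: $\vartheta$ is strictly increasing on $[P,\beta)$ and bounded above by $0$, so $\vartheta_\beta:=\lim_{x\to\beta^-}\vartheta(x)$ exists with $-\tfrac{\pi}{2}<\vartheta(x)<\vartheta_\beta\le0$ for $x\in(P,\beta)$; therefore $\lim_{x\to\beta^-}\gamma'(x)=(\cos\vartheta_\beta,\sin\vartheta_\beta)$ exists, lies in the closed fourth quadrant, and is not $(0,-1)$. I expect the individual estimates to be routine; the main work is in the bookkeeping — setting up the first-return argument at $(0,-1)$ so the contradiction with $\vartheta'(x_0)\le 0$ is clean, correctly identifying that the canonical circle is clockwise whenever the tangent points into the closed right half-plane so that mean curvature convexity forces $\kappa>0$ there, and recognizing that the only genuinely geometric scenario left to exclude, namely the tangent drifting into the first quadrant, is ruled out precisely because it would render $\gamma_2$ monotone up to $\beta$, contradicting $\gamma_2(\beta)=0$.
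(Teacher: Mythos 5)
Your proof is correct and follows essentially the same route as the paper's: both hinge on the observation that while $\gamma'$ points into the closed right half-plane the canonical circle has non-positive signed curvature, so mean-curvature convexity together with $H_0 = \kappa + (n-2)\kappa(C_x)$ forces $\kappa>0$, and both exclude drift into the first quadrant by noting that $\gamma_2$ would then be increasing up to $\beta$, contradicting $\gamma_2 \to 0$. The only difference is presentational: you work with a lifted angle function and first-return arguments, whereas the paper takes $\Omega=\sup\mathcal{S}$ for the set of points up to which the tangent has stayed in the first or fourth quadrant.
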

\begin{proof}
	Consider $\mathcal{S}$, defined as the set of all points $x \in [P, \beta)$ such that, for all
	$y \in [P, x]$, $\gamma'(y)$ lies in the first or fourth quadrant.  Let $\Omega = \sup \mathcal{S}$, and
	let us consider whether $\Omega = \beta$.  If $\Omega \neq \beta$, then we shall show that
	we may chose $\nu = \Omega$, that is, $\gamma'(\Omega) = (0, 1)$ and $\Omega \in (P, \beta)$.

	We first observe that, since $P \in (0,\beta)$ and $\kappa(P) > 0$, $\Omega > P$.  Since
	$\Omega \neq \beta$, we have that $\Omega \in (P, \beta)$.  Assume that $\gamma'(\Omega) \neq (0,1)$.  We then see that the curvature of $C_\Omega$ is less than or equal to 0, and since
	$C$ is mean-convex at all regular points (Theorem \ref*{thm:regularity}), the fact that $\Omega$ is a regular point implies that
	$\kappa(\Omega) > 0$.  There is then some $\epsilon > 0$ such that, for $x \in [\Omega, \Omega + \epsilon]$,
	$x \in \mathcal{S}$.  This contradicts the definition of $\Omega$, and so $\gamma'(\Omega) = (0,1)$, as desired.

	Thus, if $\Omega \neq \beta$, then we are in the first situation.  If $\Omega = \beta$, we shall show that
	all of the properties listed above hold true:
	\begin{enumerate}
		\item	This is a result of the fact that $\gamma$ is mean-curvature convex at every regular point, and the
			curvature of $C_x$ is not positive for all $x \in [P, \beta)$.
		\item	This is a result of the fact that $\kappa > 0$ on $(P, \beta)$.  If there was a point $x \in (P,\beta)$
			with $\gamma'(x)$ not strictly in the fourth quadrant, then this positive curvature property means that
			the derivatives at all points $y \in [x,\beta)$ are in the first quadrant, by the definition of
			$\mathcal{S}$ and the assumption that $\Omega = \beta$.  Hence,
			$\lim_{r \rightarrow \beta^-} \gamma_2(r) \geq \gamma_2(x) > 0$, which is impossible.
		\item	As a consequence of the fact that $\gamma'(x)$ lies strictly in the fourth quadrant for all $x \in (P,\beta)$, and
			$\kappa(x) > 0$ for all such values, $\theta(\gamma'(x))$ is a bounded, increasing function on
			$[P, \beta)$, and so it has a limit in $[\frac{3 \pi}{2}, 2 \pi]$.  Since the angle is strictly
			increasing, the limit cannot be $\frac{3 \pi}{2}$, and so $\lim_{x \rightarrow \beta^-} \gamma'(x)$
			lies in the fourth quadrant and is not $(0,-1)$.
	\end{enumerate}	
\end{proof}

We now prove the Second Tangent Lemma:
\begin{proof}[Proof of Second Tangent Lemma]
	Let $P \in (0,\beta)$ be the given point with $\gamma'(P) = (0,-1)$ and $\kappa(x) > 0$.  Assume that there does not exist a point $y \in (0, \beta)$ such
	that $\gamma'(y) = (0,1)$.  By Proposition \ref*{prop:continuity_tangent}, this implies that $\lim_{x \rightarrow \beta^-} \gamma'(x)$ exists, lies in the fourth quadrant, and is not $(0,-1)$.  If we let $\nu = (\nu_1, \nu_2) = \lim_{x \rightarrow \beta^-} \gamma'(x)$, then this implies that $\nu_1 > 0$.  This contradicts Lemma \ref*{lem:gamma_behavior_beta}, completing the proof.
\end{proof}

%%%%%%%%%%%%%%%%%%%%%%%%%%%%%%%%%%%%%%%%%%%%%%%%%%%%%%%%%%%%%%%%%%%%%%%%%%%%%%%%%%%%%%%%%%%%%%%%%%%%%%%%%%%%%%%%%%%%%%%%%%%%%%%%%%%%
\section{Appendix: Computations}
%%%%%%%%%%%%%%%%%%%%%%%%%%%%%%%%%%%%%%%%%%%%%%%%%%%%%%%%%%%%%%%%%%%%%%%%%%%%%%%%%%%%%%%%%%%%%%%%%%%%%%%%%%%%%%%%%%%%%%%%%%%%%%%%%%%%

In this section, we fill in some computational gaps in the proofs of some of the lemmas in Section 2.
We begin by proving the computational details used in the proof of Lemmas \ref*{lem:curvature_bound} and \ref*{lem:boundary_points}:

\begin{lem}
	\label{lem:curvature_bound_details}
	Consider a circle of radius $r > 0$ centered at point $(a,b)$ with $a \geq 0$ and $b \geq 0$.  Give this curve a counterclockwise orientation,
	and parametrize it by arclength using
		$$ \alpha(x) = (a + r \cos(\frac{x}{r}), b + r \sin(\frac{x}{r})), $$
	where $x \in [0, 2 \pi r)$.
	For any $x \in [0, \frac{\pi r}{2}]$, if $a \sin(\frac{x}{r}) \geq b \cos(\frac{x}{r})$, then
		$$ H_1'(x) \leq 0.$$
	Furthermore, if $\alpha(x) \not \in B_{\mathcal{R}(f)}$ and $a \sin(\frac{x}{r}) > b \cos(\frac{x}{r})$, then
		$$ H_1'(x) < 0.$$
	In particular, if $b = 0$, then the above statements simplify to the following:
	for any $x \in [0, \frac{\pi r}{2}]$,
		$$H_1'(x) \leq 0,$$
	and if $x \in (0, \frac{\pi r}{2}]$, $\alpha(x) \not \in B_{\mathcal{R}(f)}$, and $a > 0$, then
		$$H_1'(x) < 0.$$
\end{lem}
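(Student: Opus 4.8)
The plan is to compute $H_1$ explicitly along the circle $\alpha$ and reduce every sign to that of the single scalar function $u(x) = a\cos(x/r) + b\sin(x/r)$. First I would record the elementary geometry: the outward unit normal to the counterclockwise circle at parameter $x$ is $\nu(x) = (\cos(x/r), \sin(x/r))$, so with $\rho(x) = |\alpha(x)|$ one has $\alpha(x)\cdot\nu(x) = u(x) + r$ and $\rho(x)^2 = a^2 + b^2 + r^2 + 2r\,u(x)$, hence
$$ H_1(x) = g'(\rho(x))\,\frac{\alpha(x)}{\rho(x)}\cdot\nu(x) = g'(\rho(x))\,\frac{u(x)+r}{\rho(x)}. $$
Writing $w(x) = (u(x)+r)/\rho(x)$ so that $H_1 = g'(\rho)\,w$, a routine differentiation gives $\rho'(x) = r\,u'(x)/\rho(x)$ and then $w'(x) = u'(x)\,(a^2 + b^2 + r\,u(x))/\rho(x)^3$. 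Since $u'(x) = \frac{1}{r}\big(b\cos(x/r) - a\sin(x/r)\big)$, the hypothesis $a\sin(x/r) \ge b\cos(x/r)$ is exactly the statement $u'(x) \le 0$.

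Next I would collect the sign information that holds on $[0, \pi r/2]$. On this interval $\cos(x/r), \sin(x/r) \ge 0$ and $a, b \ge 0$, so $u(x) \ge 0$; consequently $\rho(x) \ge r > 0$, $w(x) \ge r/\rho(x) > 0$, and $a^2 + b^2 + r\,u(x) \ge 0$. Because $g$ is even and convex (and $C^3$), $g'$ is odd and nondecreasing, so $g''(t) \ge 0$ everywhere and $g'(t) \ge g'(0) = 0$ for $t \ge 0$. Now $H_1'(x) = g''(\rho)\,\rho'\,w + g'(\rho)\,w'$: the first summand is the product of $g''(\rho) \ge 0$, $\rho' = r u'/\rho \le 0$, and $w \ge 0$, hence nonpositive; the second is the product of $g'(\rho) \ge 0$ and $w' = u'(a^2+b^2+ru)/\rho^3 \le 0$, hence nonpositive. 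Therefore $H_1'(x) \le 0$, which is the first assertion.

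For strictness, suppose $\alpha(x)\notin B_{\mathcal{R}(f)}$, that is, $\rho(x) > \mathcal{R}(f)$. If $g'$ vanished at $\rho(x)$ then, being odd and nondecreasing, $g'$ would vanish on all of $[0,\rho(x)]$, forcing $g$ to be constant there and contradicting the definition of $\mathcal{R}(f)$; hence $g'(\rho(x)) > 0$. If in addition $a\sin(x/r) > b\cos(x/r)$, then $u'(x) < 0$, which in particular rules out $(a,b) = (0,0)$, so $a^2 + b^2 + r\,u(x) > 0$ and therefore $w'(x) < 0$; the second summand $g'(\rho)\,w'$ is then strictly negative and $H_1'(x) < 0$. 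Finally, when $b = 0$ we have $u'(x) = -\frac{a}{r}\sin(x/r) \le 0$ automatically on $[0,\pi r/2]$, with strict inequality precisely when $a > 0$ and $x \in (0, \pi r/2]$, so the simplified statements are the corresponding special cases.

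I expect no genuine difficulty here: the content is a short explicit computation. The only points requiring a little care are checking that $w \ge 0$ and $a^2 + b^2 + r u \ge 0$ — both of which use the restriction $x \in [0, \pi r/2]$ together with $a, b \ge 0$ — and, for the strict case, observing that $u'(x) < 0$ excludes the degenerate center $(a,b) = (0,0)$ so that $a^2 + b^2 + r u$ is genuinely positive.
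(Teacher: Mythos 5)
Your proof is correct and follows essentially the same route as the paper: write $H_1 = g'(|\alpha|)\,(N\cdot n)$, differentiate by the product rule, and observe that each summand is nonpositive because $g''\ge 0$, $g'\ge 0$, $\alpha'\cdot\alpha = -a\sin(x/r)+b\cos(x/r)\le 0$, and $(N\cdot n)'$ carries the same sign factor. Your packaging via $u$, $\rho$, $w$ is just a cleaner bookkeeping of the paper's explicit computation of $N'\cdot n + N\cdot n'$ (and in fact your factor $a^2+b^2+ru$ is the correct simplification of the paper's $ar\cos(\tfrac{x}{r})+3br\sin(\tfrac{x}{r})+a^2+b^2$, which contains a harmless sign slip), so no further comparison is needed.
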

\begin{proof}
	For any $x \in [0, \frac{\pi r}{2}]$, we have that $\gamma(x) \neq (0,0)$.  As such,
		$$ H_1(x) = g'( | \alpha(x) | ) ( N(x) \cdot n(x) ).$$
	Computing, we have that
	\begin{align*}
		(g'( | \alpha | ) N \cdot n )'(x)		&=	g''( | \alpha(x) | ) (\alpha'(x) \cdot \frac{\alpha(x)}{| \alpha(x) |})
											( N(x) \cdot n(x) ) + \\
								&			g'( | \alpha(x) |) ( N'(x) \cdot n(x) ) +
											g'( | \alpha(x) |) ( N(x) \cdot n'(x) ).
	\end{align*}

	We next compute $\alpha'(x) \cdot \alpha(x)$:
	\begin{align*}
		\alpha'(x) \cdot \alpha(x)	&=	(- \sin(\frac{x}{r}), \cos(\frac{x}{r})) \cdot (a + r \cos(\frac{x}{r}),b + r \sin(\frac{x}{r})) \\
						&=	- a \sin(\frac{x}{r}) + b \cos(\frac{x}{r})
	\end{align*}
	Since we assumed that $a \sin(\frac{x}{r}) \geq b \cos(\frac{x}{r})$, $\alpha'(x) \cdot \alpha(x) \leq 0$.  We also know that, due to the convexity
	of $g$, $g''(| \alpha(x) |) \geq 0$.  Furthermore,
	since $\alpha'(x)$ is in the second quadrant and $b \geq 0$, we have that $n(x)$ and $N(x)$ are both in the first quadrant.
	As such, $N(x) \cdot n(x) \geq 0$, and so the first
	term in this expression is less than or equal to $0$.  We also know that $g'(| \alpha(x)|)$ is non-negative, and is positive if $\alpha(x) \not \in B_{\mathcal{R}(f)}$.
	Thus, what remains is to show that
		$$N'(x) \cdot n(x) + N(x) \cdot n'(x) \leq 0$$
	with strict inequality if the conditions in the statement of the lemma are met.

	To this end, we have that
	\begin{align*}
		N(x)		&=	\frac{(a + r \cos(\frac{x}{r}), b + r \sin(\frac{x}{r}))}{\sqrt{(a + r \cos(\frac{x}{r}))^2 + (b + r \sin(\frac{x}{r}))^2}} \\ 
		N'(x)		&=	\frac{(- \sin(\frac{x}{r}), \cos(\frac{x}{r}))}{\sqrt{(a + r \cos(\frac{x}{r}))^2 + (b + r \sin(\frac{x}{r}))^2}} + \\
			        &	(a + r \cos(\frac{x}{r}), b + r \sin(\frac{x}{r}))\frac{ a \sin(\frac{x}{r}) - b \cos(\frac{x}{r}) } { ((a + r \cos(\frac{x}{r}))^2 + (b + r \sin(\frac{x}{r}))^2)^\frac{3}{2}} \\
		n(x)		&=	(\cos(\frac{x}{r}), \sin(\frac{x}{r})) \\
		n'(x)		&=	\frac{( - \sin(\frac{x}{r}), \cos(\frac{x}{r}))}{r} \\	
	\end{align*}

	As such, we have that
	\begin{align*}
		N(x) \cdot n'(x) + N'(x) \cdot n(x)		&=	\frac{- a \sin( \frac{x}{r}) + b \cos( \frac{x}{r})} {r \sqrt{(a + r \cos(\frac{x}{r}))^2 + (b + r \sin(\frac{x}{r}))^2}} + \\
										&	(r + a \cos(\frac{x}{r}) + b \sin(\frac{x}{r})) \cdot \\
										&	\frac{ a \sin(\frac{x}{r}) - b \cos(\frac{x}{r}) } { (\sqrt{(a + r \cos(\frac{x}{r}))^2 + (b + r \sin(\frac{x}{r}))^2})^{\frac{3}{2}}}
	\end{align*}
	This simplifies to
	\begin{align*}
		N(x) \cdot n'(x) + N'(x) \cdot n(x)		&=		(((a + r \cos(\frac{x}{r}))^2 + (b + r \sin(\frac{x}{r}))^2) \cdot \\
										&(- a \sin( \frac{x}{r}) + b \cos( \frac{x}{r}))
											+ r (r + a \cos(\frac{x}{r}) + \\
										& b \sin(\frac{x}{r})) (a \sin(\frac{x}{r}) - b \cos(\frac{x}{r}))) \cdot \\
										&	\frac{1}{r ((a + r \cos(\frac{x}{r}))^2 + (b + r \sin(\frac{x}{r}))^2)^\frac{3}{2}} \\
										&=	(-r (r + a \cos(\frac{x}{r}) \\
										&	- b \sin(\frac{x}{r}))  + ((a + r \cos(\frac{x}{r}))^2 + (b +  \\
										&	r \sin(\frac{x}{r}))^2)) \cdot \\
										&	\frac{-a \sin(\frac{x}{r}) + b \cos(\frac{x}{r})}{r ((a + r \cos(\frac{x}{r}))^2 + (b + r \sin(\frac{x}{r}))^2)^\frac{3}{2}} \\
											\end{align*}

	Rearranging this, we have that
	\begin{align*}
	N(x) \cdot n'(x) + N'(x) \cdot n(x) &=		(ar \cos(\frac{x}{r}) + \\
					    &		3 b r \sin(\frac{x}{r}) + a^2 + b^2) \cdot \\
					    &	\frac{-a \sin(\frac{x}{r}) + b \cos(\frac{x}{r})}{r ((a + r \cos(\frac{x}{r}))^2 + (b + r \sin(\frac{x}{r}))^2)^\frac{3}{2}} \\
	\end{align*}
	
	Since $x \in [0, \frac{\pi r}{2}]$, $\sin(\frac{x}{r})$ and $\cos(\frac{x}{r})$ are both
	non-negative.  Since $a, b \geq 0$, we thus have that all factors in the above expression are non-negative, except for possibly
		$$ -a \sin(\frac{x}{r}) + b \cos(\frac{x}{r}).$$
	Since $a \sin(\frac{x}{r}) \geq b \cos(\frac{x}{r})$, then this term is non-positive, and since $g'(| \alpha(x) |)$ is also non-negative, 
		$$H_1'(x) \leq 0.$$
	
	If $\alpha(x) \not \in B_{\mathcal{R}(f)}$, then $g'( | \alpha(x) |) > 0$.  If, in addition, $a \sin(\frac{x}{r}) > b \cos(\frac{x}{r})$, then
	$-a \sin(\frac{x}{r}) + b \cos(\frac{x}{r}) < 0$.  Additionally, $a > 0$, and so
		$$a r \cos(\frac{x}{r}) + 3 b r \sin(\frac{x}{r}) + a^2 + b^2 > 0.$$
	Hence,
		$$ H_1'(x) < 0.$$
	This completes the proof.

\end{proof}

A remark is in order with regard to the conditions $a \sin(\frac{x}{r}) \geq b \cos(\frac{x}{r})$ and $a \sin(\frac{x}{r}) > b \cos(\frac{x}{r})$.  Let $y \in [0, \beta)$, and suppose that
$\kappa(y) > 0$.  Consider $A_y$, the unique circle which approximates $\gamma$ up to the third order at $\gamma(y)$.  $A_y$ can be parametrized by $\alpha(x) = ( a + r \cos(\frac{x}{r}), b + r \sin(\frac{x}{r}))$,
where $(a,b)$ is the center of $A_y$ and $r > 0$ is its radius.  If $\alpha(x) = \gamma(y)$, then due to Lemma \ref*{lem:tangent_restriction},
	$$ (a + r \cos(\frac{x}{r}), b + r \sin(\frac{x}{r})) \cdot (-\sin(\frac{x}{r}), \cos(\frac{x}{r})) \leq 0, $$
and so
	$$ b \cos(\frac{x}{r}) \leq a \sin(\frac{x}{r}). $$
Hence, for each circle formed in this way, we automatically have one of the conditions of the theorem.  We use additional information about $C_y$ and $\kappa(y)$ to obtain the other conditions.  In the applications of
the form of this lemma where we require that $b \cos(\frac{x}{r}) < a \sin(\frac{x}{r})$, we also use information about $C_y$ and $\kappa(y)$ to conclude that the above inner product has a strict inequality.
 
Next, we compute the details used in the proof of Lemma \ref*{lem:second_derivative_kappa}:
\begin{lem}
	\label{lem:second_derivative_kappa_details}
	Consider a circle $\mathcal{C}$ with radius $r > 0$ and center $(a,0)$ with $a > 0$.  Give this curve a counterclockwise orientation, and parametrize it
	by arclength using
		$$\alpha(x) = (a + r \cos(\frac{x}{r}), r \sin(\frac{x}{r})),$$
	where $x \in [- \pi r, \pi r)$.
	If the point $(a + r , 0)$ is not in $B_{\mathcal{R}(f)}$, we then have that $H_1''(0) < 0$.
\end{lem}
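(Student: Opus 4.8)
The statement is purely computational: we must show that if $\mathcal{C}$ is a counterclockwise circle of radius $r$ centered at $(a,0)$ with $a > 0$, and the rightmost point $(a+r,0)$ lies outside $B_{\mathcal{R}(f)}$, then $H_1''(0) < 0$, where $H_1(x) = g'(|\alpha(x)|)\,(N(x)\cdot n(x))$ along the arclength parametrization $\alpha(x) = (a + r\cos(x/r), r\sin(x/r))$. Here $\alpha(0) = (a+r, 0)$, $\alpha'(0) = (0,1)$, and both $N(0)$ and $n(0)$ equal $(1,0)$. The plan is to define the smooth function $\Phi(x) = N(x)\cdot n(x)$, write $H_1 = g'(|\alpha|)\,\Phi$, and compute the second derivative at $0$ via the product rule, exploiting heavily the fact that $x=0$ is a point of symmetry of the configuration (the center is on the $e_1$-axis and $\alpha$ is even in $x$ up to reflection), so that all odd-order terms vanish.

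First I would record the elementary facts at $x=0$: $|\alpha(0)| = a+r$, $\frac{d}{dx}|\alpha(x)|\big|_0 = \alpha'(0)\cdot N(0) = 0$, and $\frac{d^2}{dx^2}|\alpha(x)|\big|_0$ is some explicit nonnegative quantity $\rho > 0$ (it is the curvature-type term for the distance function; concretely $\rho = \frac{1}{a+r}$ since near $x=0$ the distance from the origin behaves like that of a circle of radius $a+r$ to leading order — this is exactly the kind of computation already invoked in Lemma~\ref*{lem:curvature_bound_details}). Similarly $\Phi(0) = 1$, $\Phi'(0) = 0$ by symmetry (the configuration is invariant under $x \mapsto -x$), and $\Phi''(0) = N''(0)\cdot n(0) + 2 N'(0)\cdot n'(0) + N(0)\cdot n''(0)$, which I would extract from the formulas for $N', n', N'', n''$ already assembled in the proof of Lemma~\ref*{lem:curvature_bound_details} (specializing $b=0$). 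Since $\Phi \le 1$ with a maximum at $0$, we get $\Phi''(0) \le 0$ for free, but I expect we actually need the sharper claim $\Phi''(0) < 0$ unless $a = 0$; with $a > 0$ strictly this should hold, and the sign is the content of the "straightforward computation."

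Then I would assemble $H_1''(0)$ via Leibniz. Writing $\psi(x) = g'(|\alpha(x)|)$, we have $\psi(0) = g'(a+r) > 0$ (this is where the hypothesis $(a+r,0) \notin B_{\mathcal{R}(f)}$ enters — it guarantees $g'(a+r) > 0$, not merely $\ge 0$), $\psi'(0) = g''(a+r)\cdot 0 = 0$, and $\psi''(0) = g''(a+r)\cdot \rho \ge 0$ (using convexity of $g$, so $g'' \ge 0$, and $\rho > 0$). Therefore
\begin{align*}
H_1''(0) &= \psi''(0)\Phi(0) + 2\psi'(0)\Phi'(0) + \psi(0)\Phi''(0) \\
&= g''(a+r)\,\rho + g'(a+r)\,\Phi''(0).
\end{align*}
The first term is $\ge 0$ and the second is $< 0$ (strictly, since $g'(a+r) > 0$ and $\Phi''(0) < 0$ for $a > 0$). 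The conclusion $H_1''(0) < 0$ then does \emph{not} follow from this decomposition alone, because the nonnegative $g''$ term could in principle dominate — so the genuine content of the lemma, and the step I expect to be the main obstacle, is that the combination $g''(a+r)\rho + g'(a+r)\Phi''(0)$ is negative regardless of the size of $g''$. This forces a more careful grouping: the $g''$ contribution must actually be shown to be exactly cancelled or dominated once one correctly accounts for the second-order behavior of $|\alpha|$ together with the second-order behavior of $N\cdot n$. Concretely, I would not split $H_1 = \psi \cdot \Phi$ naively but instead differentiate $g'(|\alpha|)(N\cdot n)$ directly as in Lemma~\ref*{lem:curvature_bound_details}: the $g''$-term there is $g''(|\alpha|)(\alpha'\cdot \tfrac{\alpha}{|\alpha|})(N\cdot n)$, and at $x=0$ its \emph{first} derivative-relevant factor $\alpha' \cdot \alpha/|\alpha|$ vanishes to first order, so the $g''$ contribution to $H_1''(0)$ is $g''(a+r)\cdot(\tfrac{d}{dx}(\alpha'\cdot\tfrac{\alpha}{|\alpha|})|_0)\cdot 1$. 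That derivative $\tfrac{d}{dx}(\alpha'\cdot\alpha/|\alpha|)|_0 = (\alpha''\cdot\alpha + |\alpha'|^2 - (\alpha'\cdot\alpha)^2/|\alpha|^2)/|\alpha|\big|_0$; with $\alpha'(0)\cdot\alpha(0) = 0$, $|\alpha'(0)|^2 = 1$, and $\alpha''(0)\cdot\alpha(0) = (-\tfrac1r, 0)\cdot(a+r,0)\cdot\text{(normalization)} = -\tfrac{a+r}{r}$, this gives $\big(1 - \tfrac{a+r}{r}\big)/(a+r) = -\tfrac{a}{r(a+r)} \le 0$, which is $< 0$ precisely when $a > 0$. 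Thus the $g''$-term is itself $\le 0$ (and $\le 0$ strictly is not needed), the remaining $g'(a+r)\Phi''(0)$ term is $< 0$, and summing gives $H_1''(0) < 0$. The main work, then, is verifying this sign of $\Phi''(0) = (N\cdot n)''(0)$ from the explicit $b=0$ specialization of the $N', n', N'', n''$ formulas — a bounded but real computation — and carefully confirming $a > 0$ is the hypothesis that makes both the $g''$-coefficient and $\Phi''(0)$ strictly negative.
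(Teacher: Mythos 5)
Your overall route is the same as the paper's: expand $H_1=g'(|\alpha|)\,(N\cdot n)$ by the product rule at $x=0$, use $\alpha'(0)\cdot\alpha(0)=0$ to kill the odd-order terms, show the $g''$-contribution is nonpositive, and reduce the claim to $g'(a+r)\,(N\cdot n)''(0)<0$. One internal inconsistency should be cleaned up: you first assert $\rho=\tfrac{d^2}{dx^2}|\alpha|\big|_0=\tfrac{1}{a+r}>0$ and on that basis worry that the nonnegative $g''$-term could dominate; but $\tfrac{d^2}{dx^2}|\alpha|\big|_0=\tfrac{d}{dx}\bigl(\alpha'\cdot\tfrac{\alpha}{|\alpha|}\bigr)\big|_0$ is exactly the quantity you later (correctly) compute to be $-\tfrac{a}{r(a+r)}\le 0$ --- as it must be, since $(a+r,0)$ is the point of $\mathcal{C}$ farthest from the origin, so $|\alpha|$ has a maximum at $x=0$. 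Hence the ``naive'' decomposition was never in danger: the $g''$-term equals $g''(a+r)\cdot\bigl(-\tfrac{a}{r(a+r)}\bigr)\le 0$ from the start, which is precisely the paper's term $\tfrac{g''(|\alpha(0)|)(N(0)\cdot n(0))}{|\alpha(0)|}\bigl(-\tfrac{a}{r}-1+1\bigr)$.

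The genuine gap is the second half: you never establish $(N\cdot n)''(0)<0$. The soft argument (that $N\cdot n\le 1$ with a maximum at $0$) only gives $(N\cdot n)''(0)\le 0$, hence only $H_1''(0)\le 0$; the strict inequality is the entire content of the lemma, and you explicitly defer it as ``a bounded but real computation.'' The paper carries it out: specializing the formula of Lemma \ref*{lem:curvature_bound_details} to $b=0$ gives $N'\cdot n+N\cdot n'=\bigl(-a\sin(\tfrac{x}{r})\bigr)\bigl(ar\cos(\tfrac{x}{r})+a^2\bigr)\big/\bigl(r\,(a^2+2ar\cos(\tfrac{x}{r})+r^2)^{3/2}\bigr)$, whose derivative at $x=0$ is $-\bigl(a^2+\tfrac{a^3}{r}\bigr)\big/\bigl(r(a+r)^3\bigr)$, strictly negative precisely because $a>0$ and $r>0$. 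Until you perform this (or an equivalent) computation, the proof is incomplete at its decisive step.
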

\begin{proof}
		Since $r > 0$, we have that $\alpha(x) \neq (0,0)$ for every $x \in (-\pi r, \pi r)$, and so
			$$H_1(x) = (g'( | \alpha | ) (N \cdot n)) (x).$$
	Hence,	 
		$$H_1''(0) = (g'( | \alpha | ) ( N \cdot n ))''(0).$$

	Computing, we have that 
	\begin{align*}
		(g'( | \alpha | ) ( N \cdot n ) )''(0)	&=	(g''( | \alpha | ) (\alpha' \cdot \frac{\alpha}{| \alpha |}) ( N \cdot n))'(0) + \\
								&	(g'( | \alpha |) (N' \cdot n))'(0) + (g'( | \alpha |) ( N \cdot n' ))'(0).
	\end{align*}
	
	Computing the first term, we have that
	\begin{align*}
		(g''( | \alpha | ) (\alpha' \cdot \frac{\alpha}{|\alpha|})( N \cdot n))'(0)	&=	g'''( | \alpha(0) | ) (\alpha'(0) \cdot \frac{\alpha(0)}{| \alpha(0) |})^2 (N(0) \cdot n(0)) \\
												&	+ g''( | \alpha(0) | ) ((\alpha' \cdot (\frac{\alpha}{| \alpha |}))'(0)) (N(0) \cdot n(0)) \\
												&	+ g''( | \alpha(0) | ) (\alpha'(0) \cdot \frac{\alpha(0)}{| \alpha(0) |}) ((N \cdot n)'(0))
	\end{align*}

	Since $\alpha'(0) \cdot \alpha(0) = 0$, we have that the first and third term are $0$.  Using the fact that $\alpha'(0) \cdot \alpha(0) = 0$, the second term becomes
	\begin{align*}
		g''( | \alpha(0) | ) (N(0) \cdot n(0)) ( \alpha''(0) \cdot \frac{\alpha(0)}{| \alpha(0) |} & \\
		+ \alpha'(0) \cdot (\frac{\alpha'(0)}{| \alpha(0) |} - \frac{\alpha(0)}{| \alpha(0) |^3} (\alpha'(0) \cdot \alpha(0)))) &=
		\frac{g''( | \alpha(0) | ) (N(0) \cdot n(0))}{| \alpha(0) |} \cdot \\
		& ( \alpha''(0) \cdot \alpha(0) + \alpha'(0) \cdot \alpha'(0))
	\end{align*}
	
	We see that
		$$ \alpha'(x) = (-\sin(\frac{x}{r}), \cos(\frac{x}{r})),$$
	and so 
		$$ \alpha''(x) = (-\frac{\cos(\frac{x}{r})}{r}, -\frac{\sin(\frac{x}{r})}{r}).$$
	As such, we then have that the above expression becomes
		$$\frac{g''( | \alpha(0) | ) (N(0) \cdot n(0))}{| \alpha(0) |} (-\frac{a}{r} -1 + 1) \leq 0.$$

	Let us now consider the other terms.  They are
	\begin{align*}
		(g'( | \alpha |) (N' \cdot n))'(0) + (g'( | \alpha |) (N \cdot n'))'(0) &= g''( | \alpha(0) | ) (\alpha'(0) \cdot \frac{\alpha(0)}{| \alpha(0) |}) \\
										    &  (N'(0) \cdot n(0)) + g''( | \alpha(0) | ) \cdot \\
										    &  (\alpha'(0) \cdot \frac{\alpha(0)}{| \alpha(0) |}) (N(0) \cdot n'(0)) + \\
										    &   (g'( | \alpha(0) |) (N' \cdot n + N \cdot n')'(0)
	\end{align*}
	The first two terms are zero because $\alpha'(0) \cdot \alpha(0) = 0$.  Since $\alpha(0) = (a + r, 0) \not \in B_{\mathcal{R}(f)}$, we have that $g'( | \alpha(0) | ) > 0$.  Hence, we must show that
	$(N' \cdot n + N \cdot n')'(0) < 0$.  Since the center of $\mathcal{C}$ is $(a,0)$ with $a > 0$, we can use the method of computation used in the proof of Lemma \ref*{lem:curvature_bound_details} to tell us that
	
		$$N'(x) \cdot n(x) + N(x) \cdot n'(x) = \frac{(-a \sin(\frac{x}{r}))(ar \cos(\frac{x}{r}) + a^2)}{r (a^2 + 2ar \cos(\frac{x}{r}) + r^2)^\frac{3}{2}} $$

	We can now compute $(N' \cdot n + N \cdot n' )'(x)$:
	\begin{align*}
	(N' \cdot n + N \cdot n' )'(x) &=  \frac{a^2 \sin^2(\frac{x}{r}) - a^2 \cos^2(\frac{x}{r}) - \frac{a^3}{r}\cos(\frac{x}{r})}{r (a^2 + 2ar \cos(\frac{x}{r}) + r^2)^\frac{3}{2}}   \\
				       &  + (-a \sin(\frac{x}{r}))(ar \cos(\frac{x}{r}) + a^2) \cdot \\
				       & \frac{3a \sin(\frac{x}{r})}{r  (a^2 + 2ar \cos(\frac{x}{r}) + r^2)^\frac{5}{2}}
	\end{align*}
	At $x = 0$, this just becomes
		$$ - \frac{a^2 + \frac{a^3}{r}}{r (a + r)^3}$$
	which is strictly less than $0$ because $r, a > 0$.  This completes the proof.
\end{proof}

Lastly, we give the computational details used in the proof of Lemmas \ref*{lem:inside_ball} and \ref*{lem:F_and_R}:
\begin{lem}
\label{lem:inside_ball_details}
Let $\mathcal{C}$ be a circle with center $(a,b)$, $b \geq 0$, and radius $r > 0$.  Give $\mathcal{C}$ a counterclockwise orientation, and parametrize it by arclength using
	$$ \alpha(x) = (a + r \cos(\frac{x}{r}), b + r \sin(\frac{x}{r})),$$
where $x \in [0, 2 \pi r)$.
Furthermore, let $\lambda(x)$ be defined in the same way as before: $\lambda(x) = \kappa(C_x)$, where $C_x$ is the canonical circle at $x$.  Here, $C_x$ is defined in the usual way
if $\alpha_2(x) \neq 0$.   If $\alpha_2(x) = 0$ and $\alpha'(x) = (0, \pm 1)$,
then we define $C_x$ as $\mathcal{C}$.  If $\alpha_2(x) = 0$ and $\alpha'(x) \neq (0, \pm 1),$ then $C_x$ is not defined.
Additionally, let us define $F(x)$ as the $e_1$ coordinate of the center of $C_x$, also as before.
We then have that $\lambda$ and $\kappa$ both exist and are smooth on $(0, \pi r)$.  Furthermore, on $[\frac{\pi r}{2}, r \pi)$, $\lambda'(x) \leq 0$, and on $(0, \frac{\pi r}{2}]$, $F'(x) \geq 0$.
\end{lem}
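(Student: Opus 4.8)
The plan is to prove the lemma by writing the canonical circle $C_x$ explicitly: compute its center, its radius, and its orientation as functions of $x$, and then read off smoothness and the signs of $F'$ and $\lambda'$ directly.

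First I would note that $\alpha'(x) = (-\sin(\frac{x}{r}), \cos(\frac{x}{r}))$, so on $(0,\pi r)$ the tangent is never vertical: it lies strictly in the second quadrant on $(0,\frac{\pi r}{2})$, equals $(-1,0)$ at $\frac{\pi r}{2}$, and lies strictly in the third quadrant on $(\frac{\pi r}{2},\pi r)$. Moreover $\alpha_2(x) = b + r\sin(\frac{x}{r}) > 0$ on $(0,\pi r)$, since $b \ge 0$, $r > 0$, and $\sin(\frac{x}{r}) > 0$ there. Hence on this whole interval $C_x$ is a genuine circle defined ``in the usual way'', and the exceptional case $\alpha_2(x) = 0$ of the definition does not arise.

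Next I would compute the center $(F(x),0)$ of $C_x$, which is determined by the orthogonality condition $\big(\alpha(x) - (F(x),0)\big)\cdot \alpha'(x) = 0$; solving this linear equation yields the clean formula $F(x) = a - b\cot(\frac{x}{r})$. This is smooth on $(0,\pi r)$ and satisfies $F'(x) = \frac{b}{r\sin^2(x/r)} \ge 0$ on all of $(0,\pi r)$, which in particular gives $F' \ge 0$ on $(0,\frac{\pi r}{2}]$. Substituting $F(x)$ back in, the radius of $C_x$ satisfies $r_x^2 = (\alpha_1(x)-F(x))^2 + \alpha_2(x)^2$, which simplifies to $r_x = r + \frac{b}{\sin(x/r)}$.

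The only step needing any care is the orientation of $C_x$, i.e.\ the sign of $\kappa(C_x)$. For this I would check that the vector from $\alpha(x)$ pointing toward the center $(F(x),0)$ is a positive multiple of $(-\cos(\frac{x}{r}), -\sin(\frac{x}{r}))$ (this uses $\alpha_2(x) > 0$ and $\sin(\frac{x}{r}) > 0$), which is exactly the counterclockwise quarter-turn of $\alpha'(x)$; hence $C_x$ is counterclockwise-oriented and $\lambda(x) = \kappa(C_x) = \frac{1}{r_x} = \frac{\sin(x/r)}{r\sin(x/r) + b} > 0$. Since $r\sin(\frac{x}{r}) + b > 0$ on $(0,\pi r)$, this formula is smooth there, with $\lambda'(x) = \frac{b\cos(x/r)}{r\,(r\sin(x/r)+b)^2}$; because $\cos(\frac{x}{r}) \le 0$ on $[\frac{\pi r}{2},\pi r)$ while $b \ge 0$ and $r > 0$, we get $\lambda' \le 0$ on that interval. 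Finally $\kappa \equiv \frac{1}{r}$ is constant, hence trivially smooth. The proof presents no real obstacle; the only genuinely non-mechanical points are fixing the orientation convention for $\kappa(C_x)$ so that $\lambda$ comes out positive (consistent with its use in the main text) and observing that $r\sin(\frac{x}{r}) + b$ stays strictly positive on $(0,\pi r)$ — which is exactly where the hypothesis $b \ge 0$ enters.
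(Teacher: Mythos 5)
Your proof is correct and follows essentially the same route as the paper: solving for the center and radius of $C_x$ from the condition that the inward normal of $\mathcal{C}$ at $\alpha(x)$ points to a center on the $e_1$ axis, arriving at the same formulas $F(x) = a - b\cot(\frac{x}{r})$ and $\lambda(x) = \frac{\sin(x/r)}{r\sin(x/r)+b}$, and then differentiating. The only addition is your explicit verification of the orientation of $C_x$, which the paper leaves implicit; this is a worthwhile check but does not change the argument.
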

\begin{proof}
	Fix $x \in (0, \pi r)$; we will produce expressions for $\lambda(x)$ and for $F(x)$.  Since $b \geq 0$, $\alpha_2(x) > 0$, and so we know that the line from $\alpha(x)$ through 
the center of $C_x$ is not parallel to the $e_1$ axis.  Hence, we can determine $F(x)$ by computing the point of intersection between these two lines, and we can determine $\lambda(x)$
by computing the length of the line segment connecting $\alpha(x)$ to the center of $C_x$.
	
	At $\alpha(x)$, the tangent vector to $C_x$ is $(- \sin(\frac{x}{r}), \cos(\frac{x}{r}))$, and so the unit inward normal is $(-\cos(\frac{x}{r}), -\sin(\frac{x}{r}))$.  Let 
$r'$ be the radius of $C_x$.  As a result of the above discussion, we have that
	$$ b + r \sin(\frac{x}{r}) - r' \sin(\frac{x}{r}) = 0,$$
and so
	$$ r' = \frac{r \sin(\frac{x}{r}) + b}{\sin(\frac{x}{r})}.$$
As such, 
	$$ \lambda(x) = \frac{\sin(\frac{x}{r})}{r \sin(\frac{x}{r}) + b}. $$
In addition, we clearly have that
	$$ F(x) = a + r \cos(\frac{x}{r}) - r' \cos(\frac{x}{r}),$$
which becomes
	$$ F(x) = a - b \frac{\cos(\frac{x}{r})}{\sin(\frac{x}{r})}.$$
Using these expressions, we see that $F$ and $\lambda$ are both smooth on $(0, \pi r)$.
Differentiating $\lambda$, we obtain
	$$ \lambda'(x) = \frac{b \cos(\frac{x}{r})}{r(r \sin(\frac{x}{r}) + b)^2},$$
which is less than or equal to $0$ on $[\frac{\pi r}{2}, \pi r)$.
Differentiating $F(x)$, we obtain that
	$$ F'(x) = \frac{b}{r} \frac{1}{\sin^2(\frac{x}{r})},$$
which is greater than or equal to $0$ on $(0, \frac{\pi r}{2}]$.
\end{proof}

\bibliographystyle{amsplain}
\bibliography{density_iso_bibliography}

\bigskip

\begin{tabbing}
\hspace*{7.5cm}\=\kill
Gregory R. Chambers                 \\
Department of Mathematics           \\
University of Chicago               \\
Chicago, Illinois, 60637            \\
Canada                              \\
e-mail: chambers@math.uchicago.edu  \\
\end{tabbing}

\end{document}